\theoremstyle{plain}
\newtheorem{theorem}{Theorem}
\newtheorem*{theorem*}{Theorem}
\newtheorem{lemma}{Lemma}[section]
\newtheorem{corollary}[lemma]{Corollary}
\newtheorem{proposition}[lemma]{Proposition}
\newtheorem{fact}[lemma]{Fact}
\theoremstyle{definition}
\theoremstyle{remark}
\newtheorem{definition}[lemma]{Definition}
\newtheorem{remark}[lemma]{Remark}
\newtheorem{example}[lemma]{Example}
\numberwithin{equation}{section}
\def\N{\ensuremath{\mathbb{N}}}
\def\R{\ensuremath{\mathbb{R}}}
\def\ep{\varepsilon}
\def\E{\ensuremath{\mathbf{E}}}
\def\P{\ensuremath{\mathbf{P}}}
\def\Ind{\ensuremath{\mathbbm{1}}}
\def\to{\rightarrow}
\def\tand{\ensuremath{\text{ and }}}
\def\tif{\ensuremath{\text{ if }}}
\def\rT{\mathcal{T}}
\def\rV{\mathcal{V}}
\def\rW{\mathcal{W}}
\def\ind{\ensuremath{\mathbbm{1}}}
\def\Trfin{\mathfrak T_f}
\def\Trinf{\mathfrak T_e}
\def\Trinfall{\mathfrak T}
\def\Trone{\mathfrak T_1}
\def\Trlfin{\Trfin^\ell}
\def\Trlinf{\Trinf^\ell}
\def\Trlinfall{\Trinfall^\ell}
\def\Trlone{\Trone^\ell}
\def\Tdfin{\mathbb T_f}
\def\Tdinf{\mathbb T_e}
\def\Tdinfall{\mathbb T}
\def\Sp{\mathrm{Spine}}
\def\S{\Theta}
\def\root{\rho}
\def\Tdspine{\overline{\mathbb T}}
\def\Trspine{\overline{\mathfrak T}^\ell}
\def\PSpace{\mathcal P}
\DeclareMathOperator\Poi{Poi}
\DeclareDocumentCommand \Sop { O{p,q} }
{
\mathcal C_{#1}
}
\DeclareDocumentCommand \Srop { O{p,q} }
{
\mathcal S_{#1}
}
\DeclareDocumentCommand \Cop { m m }
{
\mathcal C(#1,#2)
}
\DeclareDocumentCommand \Dop { o o }
{
\IfNoValueTF {#1}
{
\mathcal D
}
{
\IfNoValueTF {#2}
{\mathcal D(#1)}
{\mathcal D(#1,#2)}
}
}
\DeclareDocumentCommand \M { m }
{
\mathcal M_1(#1)
}
\DeclareDocumentCommand \Mf { m }
{
\mathcal M_f(#1)
}
\DeclareDocumentCommand \tembed { m }
{
\iota(#1)
}
\DeclareDocumentCommand \TODO { o }%
{%
\IfNoValueTF {#1}%
{{\color{red}TODO}}%
{{\color{red}TODO:#1}}%
}
\DeclareDocumentCommand \EPO { o }%
{%
\IfNoValueTF {#1}%
{\vartriangleleft}%
{\vartriangleleft_{#1}}%
}
\renewcommand*{\@fnsymbol}[1]{\ensuremath{\ifcase#1\or *\or **\or \dagger\or \ddagger\or
   \mathsection\or \mathparagraph\or \|\or \dagger\dagger
   \or \ddagger\ddagger \else\@ctrerr\fi}}
\author{\textsc{Olivier H\'enard} 
\thanks{Laboratoire de Math\'ematiques d'Orsay, Univ. Paris-Sud, CNRS, Universit\'e Paris-Saclay, 91405 Orsay Cedex, France. Supported by EPSRC grant EP/J004022/2 at the time of this work.} 
\and
\textsc{Pascal Maillard}\thanks{Laboratoire de Math\'ematiques d'Orsay, Univ. Paris-Sud, CNRS, Universit\'e Paris-Saclay, 91405 Orsay Cedex, France. Partially supported by a grant from the Israel Science Foundation at the time this article was written.}
}
\title{On trees invariant under edge contraction}
\begin{document}

\maketitle

\begin{abstract}
We study random trees which are invariant in law under 
the operation of contracting each edge independently with probability $p\in(0,1)$.  
We show that all such trees can be constructed through Poissonian sampling  from a certain class of random
measured $\R$-trees satisfying a natural scale invariance property. This has connections to exchangeable 
partially ordered sets, real-valued self-similar increasing processes and quasi-stationary distributions 
of Galton--Watson processes.
\end{abstract}

\section{Introduction}

Take a random rooted tree $T$ (in the graph sense) and contract each edge independently with 
probability $p\in(0,1)$. Are there (necessarily infinite) random trees which are invariant in law under this operation? 
Trivial examples are the (semi)-infinite ray, i.e.\ the one-dimensional half-lattice $\N:= \{0,1,\ldots\}$ and with root $0$, 
or a (possibly random) number of copies of this graph whose roots are identified. 
Slightly less trivial examples are the previous graph with a bouquet of edges attached to each vertex, 
the number of edges in each bouquet being independent and identically distributed (iid) according to a geometric distribution (starting at $0$). 
There are however many more examples of such trees. Indeed, the following is the main result of this article, which is an informal statement of Theorem~\ref{th:self_similar}:
\begin{theorem*}
There is a one-to-one correspondence between random trees invariant under the above contraction operation and a certain class of continuum random trees invariant under rescaling by the factor $p$.
\end{theorem*}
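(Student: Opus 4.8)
The plan is to establish the correspondence by constructing the two maps explicitly and then proving they are mutually inverse. In one direction, from a scale-invariant random measured $\R$-tree $(\mathcal{T}, d, \mu)$ to a discrete tree, I would run a Poisson point process directed by $\mu$ on $\mathcal{T}$ and declare the discrete tree to be the combinatorial genealogical tree spanned by the sampled points together with the root, read off from the ancestor partial order. In the other direction, from a contraction-invariant discrete tree $T$ to a continuum object, I would exploit the invariance to realize $T$ as the coarsest level of an infinite refinement tower $\cdots \to T_2 \to T_1 \to T_0 = T$, where each arrow is the $p$-contraction, so that $C_p(T_{k+1}) \deq T_k$ and, by invariance, $T_k \deq T$ for every $k$; after rescaling the edge lengths of $T_k$ by the appropriate power of $p$ (so that the metric neither collapses nor explodes), I would pass to the limit in the Gromov--Hausdorff--Prokhorov topology to obtain the measured $\R$-tree.

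First I would make precise the dictionary ``contraction by $p$ $\approx$ dilation by $p$'': iterating edge contraction multiplies the surviving edge-count along any fixed path by a geometric factor, so at the continuum level it acts as a dilation of the metric, and the stationarity of the refinement tower forces the limiting tree to satisfy $(\mathcal{T}, p\, d, \mu') \deq (\mathcal{T}, d, \mu)$, which is exactly the claimed scale invariance. Conversely, for the sampling map I would check that contracting the spanned discrete tree with probability $p$ amounts, through the mapping and thinning properties of Poisson processes combined with the self-similarity of $(\mathcal{T},\mu)$, to re-sampling the tree at the dilated scale; since dilation preserves the law of the continuum tree, the discrete tree it produces is contraction-invariant. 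The representation underpinning the sampling map is most naturally phrased via the exchangeable partially ordered set carried by the sampled points, and I would invoke the corresponding representation theorem (of de Finetti / Aldous--Hoover type) to guarantee that \emph{every} contraction-invariant law arises from some directing measured $\R$-tree, and that this directing object is essentially unique.

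The hard part will be the reconstruction direction: showing that the rescaled refinement tower is tight and converges to a genuine measured $\R$-tree in the right class, and that the limit does not depend on the choices made in building the tower. This demands quantitative control of the edge-length distributions under repeated contraction --- precisely where the geometric laws, and hence in the scaling limit the self-similar increasing processes alluded to above, enter --- in order to prevent the metric from degenerating, together with a consistency or martingale argument to identify the limit. Once existence and essential uniqueness of the directing object are in hand, bijectivity follows by checking that Poisson sampling inverts the refinement-limit construction: sampling from the limit returns a discrete tree with the same law as $T$, while refining a sampled tree returns the original continuum tree, so the two maps compose to the identity on each side.
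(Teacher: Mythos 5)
Your overall architecture (Poisson sampling in one direction, rescale-and-pass-to-the-limit in the other) matches the paper's, but two of your steps contain genuine gaps.

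First, the sampling map is mis-specified. Taking a Poisson process with intensity $\mu$ and forming the genealogical tree of the sampled points under the ancestor partial order is \emph{not} the discretization that realizes the correspondence. The correct class of directing trees consists of measured $\R$-trees with $\mu\ge\ell_\rT$, and one must sample \emph{two} populations: points from the length measure $\ell_\rT$, which may serve as ancestors, and points from the excess $\mu-\ell_\rT$, which are forced to be childless. With your map, the tree $(\R_+,\lambda\,\ell)$ produces a single infinite path for every $\lambda\ge1$, so the map is not injective, and the basic invariant example from the introduction (a ray with iid geometric bouquets attached to each vertex) is not in its image at all --- the bouquets are encoded precisely by the excess $\mu-\ell_\rT$ and the rule that those points never become ancestors. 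This distinction is also what makes the commutation relation $\Dop\circ\Srop=\Sop\circ\Dop$ (the easy direction of the theorem) hold.

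Second, you cannot simply invoke a de Finetti / Aldous--Hoover-type representation theorem to obtain existence and essential uniqueness of the directing $\R$-tree. No off-the-shelf theorem produces a measured $\R$-tree living in a specified topological space, with the metric completion performed measurably, from an exchangeable partial order; the closest available result (Janson's representation of exchangeable posets via graph limits) does not obviously yield one, and the paper explicitly records that it pursued and abandoned exactly this route. The content you defer is where all the work lies: continuity of the discretization map for the GHP topology (proved via distance-matrix distributions and an induction on polynomial test functions), its injectivity on infinite trees (proved via a pruning operation that commutes with discretization), and the fact that a sequence of $\R$-trees whose discretizations converge is itself GHP-precompact (a two-part tightness estimate controlling both the $\mu$-mass of balls and the number of disjoint long rays). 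Your refinement-tower device is fine in itself --- it is essentially equivalent to the paper's step of embedding $T$ with unit edge lengths and applying $\Srop[p^n,q^n]$, coupled to $\Sop[p^n,q^n](T)=T$ --- but without the corrected sampling map and the continuity/injectivity/tightness results the argument does not close.
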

The proof of this result will involve quite an extensive machinery of tools, including, for instance,
Gromov--Hausdorff--Prokhorov convergence of locally compact metric spaces 
and random exchangeable partial orders.
We furthermore give in this article constructions of several examples of such trees and completely characterize the class of the trees which are also invariant (in law) under translations along the spine. This has connections with real-valued self-similar increasing processes and with quasi-stationary distributions of subcritical Galton--Watson processes. 

The study of this problem originates in a geometrically motivated question asked to us by I.~Benjamini: 
Is it possible to find a law on the space of trees and a suitable renormalization procedure, 
i.e.\ a suitable random coarse-graining operation that preserves the tree structure, such that the 
law of the tree is invariant under this operation? 
The contraction operation is an example of such a renormalization procedure, 
which has the pleasant feature of allowing to characterize all 
locally finite random trees it leaves invariant. 

We are not aware of any similar results in the random tree literature. 
The operations that are usually considered on trees, for example random growth or pruning, 
always operate on leaves or whole subtrees instead of single interior vertices, see \cite{RE85, AP98}.
There might be a good reason for that: The trees we obtain are indeed very different from usual trees in the sense that they are very elongated, with long chains of vertices of degree 2, to which might be attached some bouquets of edges as in the above example. In particular, neither exponentially growing trees nor critical Galton--Watson trees (conditioned on non-extinction) are amongst them.

\subsection*{Definitions and statements of the results}

The precise statements of our results require a fair amount of definitions which we give in this section, occasionally referring to the appendix for details. Very impatient readers might directly jump to the statement of the main theorem (Theorem~\ref{th:self_similar}) and work their way backwards to understand all the definitions.

In this article, we consider rooted, locally finite trees $T=(V,E,\root)$ in the graph-theoretic sense; 
in combinatorics these are also known as \emph{unlabelled}, \emph{unorderered}, \emph{non-plane} or \emph{P\'olya} trees 
\cite{Stanley,FS2009,Drmota}.
We say that two such trees are \emph{equivalent} if there exists a root-preserving graph isomorphism between them and denote by $\Tdinfall$ the space of equivalence classes of trees. We usually identify an equivalence class with its representatives.
A path in $T$ is a finite or infinite sequence of adjacent, pairwise distinct vertices of $T$, and an end is an infinite path starting at the root.
The \emph{spine} of $T$, denoted by $\Sp(T)$, is the union of the ends of $T$, or $\{\rho\}$ in case $T$ has no end.
We denote by $\Tdinf\subset \Tdinfall$ the subspace of trees with finitely many ends and by $\Tdfin \subset \Tdinf$ the subspace of the finite trees. 

We endow the space $\Tdinfall$ (and its subspaces) with the topology of local convergence defined as follows: For a tree $T\in\Tdinfall$ and $k\in\N$, denote by $T^{\le k}$ its restriction to the vertices at (graph) distance at most $k$ from the root. A sequence $(T_n)_{n\ge0}$ in $\Tdinfall$ then is said to converge to $T\in\Tdinfall$ if for every $k\in\N$, $(T_n^{\le k})_{n\ge0}$ converges to $T^{\le k}$ in $\Tdfin$. This topology is metrizable in such a way that the space $\Tdinfall$ is a complete separable metric space, but not the space $\Tdinf$ (see Section~\ref{sec:Dtrees}). Note also that the induced topology on $\Tdfin$ is indeed the discrete topology.

In order to deal with \emph{random} trees, we endow the space $\Tdinfall$ with the Borel-$\sigma$-field induced by its topology. We then denote by $\M{\Tdinfall}$ the space of probability measures on $\Tdinfall$, endowed with the topology of weak convergence. 
We will often denote the elements of $\M{\Tdinfall}$ by $T$ as well, and will refer to them as \emph{random} trees. We similarly define $\M{\Tdinf}$ and $\M{\Tdfin}$. 

A rooted tree $T=(V,E,\root)\in\Tdinfall$ determines a partial order $\preceq_T$ on its vertex set $V$ by $v \preceq_T w$ if and only if $v$ lies on the (unique) path from $\root$ to $w$. In this case, we say that $v$ is an \emph{ancestor} of $w$, or that $w$ is a \emph{descendant} of $v$; accordingly, we call $\prec_T$ the \emph{ancestral relation} of the tree $T$. 
We also write $v \prec_T w$ when $v \preceq_T w$ and $v \neq w$.
Note that the tree $T$ can in fact be completely recovered from $\preceq_T$ and $\prec_T$\footnote{In fact, the tree can be viewed as the transitive reduction of the (acyclic) directed graph $(V,\prec_T)$.}.
This allows us to formally define a generic contraction operation as follows:

\begin{definition}
\label{def:contraction}
Let $T=(V,E,\root)\in \Tdinf$ and let $V'\subset V$ be a subset of its vertices containing the root and containing an infinite number of vertices on each infinite path. The \emph{contracted tree} $\Cop{T}{V'}$ is defined to be the 
rooted tree with vertex set $V'$, root $\root$ and whose partial order $\preceq_{\Cop{T}{V'}}$ is the restriction of $\preceq_T$ to $V'$. It is easy to see that $\Cop{T}{V'}$ is indeed locally finite and has a finite number of ends only, i.e. $\Cop{T}{V'}\in\Tdinf$.
\end{definition}

We then define the randomized contraction operation we will consider.
\begin{definition}
\label{def:contraction_pq}
Let $p,q\in(0,1)$. Let $T=(V,E,\rho)\in\Tdinf$. Set $V_0 = V\backslash\Sp(T)$ and $V_1 = \Sp(T)\backslash\root$. The random tree $\Sop(T)\in\M{\Tdinf}$ is defined to be equal to $\Cop{T}{V'}$, where $V'$ is the random subset\footnote{It is easy to show that this subset contains an infinite number of vertices on each infinite path.} of vertices containing $\root$, every vertex in $V_0$ independently with probability $p$ and every vertex in $V_1$ independently with probability $q$. 
\end{definition}
Note that if $p\ne q$, then the map $\Sop:\Tdinf\to\M{\Tdinf}$ is not continuous, because when a sequence of trees $T_n$ converges to a tree $T$, non-spine vertices can become spine vertices in the limit. However, if we define for $M\in\N$ the map $\Sop^M$, in which a vertex is kept in $V'$ with probability $q$ if it has a descendant at distance $M$, and with probability $p$ otherwise, then this map is easily seen to be continuous. In particular\footnote{See Section~\ref{sec:extension}.}, it extends to a continuous map $\Sop^M:\M{\Tdinfall}\to\M{\Tdinfall}$. Since $\Sop = \lim_{M\to\infty} \Sop^M$, we conclude that the map $\Sop$ is measurable and extends to a (measurable) map $\Sop:\M{\Tdinf}\to\M{\Tdinf}$. 
This allows us to write $\Sop(T)$ for a \emph{random} tree $T\in\M{\Tdinf}$.

\begin{definition}
\label{def:self_similar_discrete}
Let $p,q\in(0,1)$. We say that a random tree $T\in\M{\Tdinf}$ is \emph{$(p,q)$-self-similar}, if $T \stackrel{\text{law}}{=}\Sop(T)$.
\end{definition}
\begin{remark}
In defining the contraction operation $\Sop$, we restricted our discussion to trees in $\Tdinf$. 
This brings no restriction when studying locally finite self-similar random  trees, since
there are no locally finite self-similar random trees with infinitely many ends: when applying the contraction operation repeatedly to such trees, the distance from the root to the branchpoints on the spine stochastically decreases, hence the degree of the root goes to infinity in law. Therefore, the degree of the root would have to be infinite in the first place.
\end{remark}

We now extend the above definitions to $\R$-trees\footnote{
We have been notified by an anonymous referee that one can maybe streamline some arguments by working in the setting of  $0$-hyperbolic spaces which includes both discrete trees and $\R$-trees. Since we are not familiar with these notions, we did not pursue this direction.}, and point the reader to Section~\ref{sec:Rtrees} for precisions concerning the definitions below.  We call $\Trinfall$ the space of complete, locally compact, rooted, measured $\R$-trees $\rT = (\rV,d,\root,\mu)$, with $\mu$ boundedly finite, modulo equivalence with respect to root- and measure-preserving isometries. This space is endowed with the GHP-topology and with its induced Borel $\sigma$-field. 
As above, we denote by $\Trinf\subset\Trinfall$ the subspace of trees with finitely many ends (see before Lemma \ref{lem:precompact_2} for the definition of an end in this setting),
and by $\Trfin$ and $\Trone$ the subspaces of $\Trinf$ for which the measure $\mu$ is finite or a probability measure, respectively. 

Every $\R$-tree $\rT$ defines a \emph{length measure} $\ell_\rT$ on its set of vertices, see  \eqref{eq:length}. In many cases of interest in probability theory, the length measure is not locally finite\footnote{With respect to the topology on $\rV$ induced by the metric $d$. One can define a finer topology generated by open segments with respect to which the length measure is always locally finite. Note that both topologies induce the same $\sigma$-algebra on the space of locally compact trees.}, for example in the case of Aldous's Brownian continuum random tree. However, in this article, we will be interested in those trees $\rT = (\rV,d,\root,\mu)\in\Trinf$ for which the measure $\mu$ dominates the length measure $\ell_\rT$, such that, in particular, the length measure is boundedly finite and 
$\mu$
has full support. We therefore define the spaces
\[
\Trlinf = \{\rT = (\rV,d,\root,\mu)\in\Trinf: \mu \ge \ell_\rT\},\quad \Trlfin = \Trfin \cap \Trlinf,\quad \Trlone = \Trone \cap \Trlinf,
\]
Note that a tree $\rT \in \Trlone$ has diameter at most 1 since $\mu$ dominates $\ell_\rT$ by definition. 
We show below that $\Trlinf$, $\Trlfin$ and $\Trlone$ are closed subspaces of $\Trinf$, $\Trfin$ and $\Trone$, respectively (Lemma~\ref{lem:Trlinf_closed}) and that the space $\Trlone$ is compact (Proposition~\ref{prop:topologies_2}).

We now define a (deterministic) rescaling operation $\Srop$ which will play the role of $\Sop$ for $\R$-trees. For a tree $\rT=(\rV,d,\root,\mu)\in\Trinf$ we denote by $\Sp(\rT)$ the subset of its vertices, called the \emph{spine}, which lie on an end. If $x\in\rV$, we denote by $\vec x$ the most recent ancestor of $x$ on the spine, i.e.\ the vertex in $\llbracket \root,x\rrbracket\cap \Sp(\rT)$ with maximal distance from the root. For two vertices $x,y\in \rV$, we then have
\[
 d(x,y) = d(x,\vec x) + d(\vec x,\vec y) + d(\vec y,y),\quad\tif \vec x \ne \vec y.
\]

\begin{definition}
 \label{def:Srop}
 If $\rT = (\rV, d, \root, \mu)\in \Trlinf$ and $p,q\in(0,1)$, then we define the tree $\rT' = \Srop(\rT) = (\rV', d', \root', \mu')$ by
\begin{itemize}
 \item $\rV' = \rV$ and $\root' = \root$,
 \item $d'(x,y) = pd(x,y) + (q-p)d(\vec x,\vec y)$ and
 \item $\mu' = p\mu + (q-p)\ell_\rT(\cdot \cap \Sp(\rT))$.
\end{itemize}
In words, we shrink distances \emph{off} the spine by a factor $p$ and \emph{on} the spine by a factor $q$ and scale the component $\mu-\ell_\rT$ of the measure $\mu$ by a factor $p$.
\end{definition}

As for $\Sop$, the map $\Srop: \Trlinf \to \Trlinf$ is not continuous when $p\ne q$, but is the limit as $R\to\infty$ of continuous maps $\Srop^R$ defined as follows: for  $\rT \in \Trlinf$ and $R \geq 0$ consider the subset $\Sp_R(\rT) \subset \rV$ of the vertices of $\rT$ that have a descendant at distance larger than $R$. 
Then define $\Srop^R$ analogously to $\Srop$, using $\Sp_R$ instead of $\Sp$\footnote{Formally, in the tree $\rT' = \Srop^R(\rT) = (\rV', d', \root', \mu')$, one has 
  $d'(x,y) = pd(x,y) + (q-p)d(\vec x_R,\vec y_R)$ and
  $\mu' = p\mu + (q-p)\ell_\rT(\cdot \cap \Sp_R(\rT))$ with 
  $\vec x_R$ the most recent ancestor of $x$ in $\Sp_R(\rT)$.}.
For every $R\ge0$, the map $\Srop^R$ is continuous, as can be shown by straightforward but fairly technical arguments,  see for example the proof of Lemma 2.6 (ii) in \cite{EPW06} for a similar situation. The map $\Srop = \lim_{R\to\infty} \Srop^R$ is then measurable and extends to a map $\Srop:\M{\Trlinf}\to\M{\Trlinf}$, allowing us to write $\Srop(\rT)$ for a \emph{random} tree $\rT\in\M{\Trlinf}$.

\begin{definition}
 \label{def:self_similar_continuous}
 Let $p,q\in(0,1)$. We say that a random rooted measured $\R$-tree $\rT\in\M{\Trlinf}$ is $(p,q)$-self-similar, if $\rT \stackrel{\text{law}}{=} \Srop(\rT)$.
\end{definition}
We now define a discretization operation on the space $\Trlinf$, which will allow to turn a self-similar $\R$-tree into a self-similar discrete tree. For this, we recall that as for discrete trees, every rooted $\R$-tree $\rT=(\rV,d,\root)$ induces a partial order $\preceq_\rT$ on its set of vertices $\rV$ by $x\preceq_\rT y$ if and only if $x\in\llbracket\root,y\rrbracket$, where $\llbracket\root,y\rrbracket$ is the range of the geodesic from $\root$ to $y$ (however, it is not true anymore that $\rT$ can be recovered from $\preceq_T$). Again, we write $x \prec_\rT y$ when $x \preceq_\rT y$ and $x \neq y$. We can now define the following discretization operation:

\begin{figure}[ht]
\label{fig:discret}
\begin{center}
 \includegraphics[width=10cm]{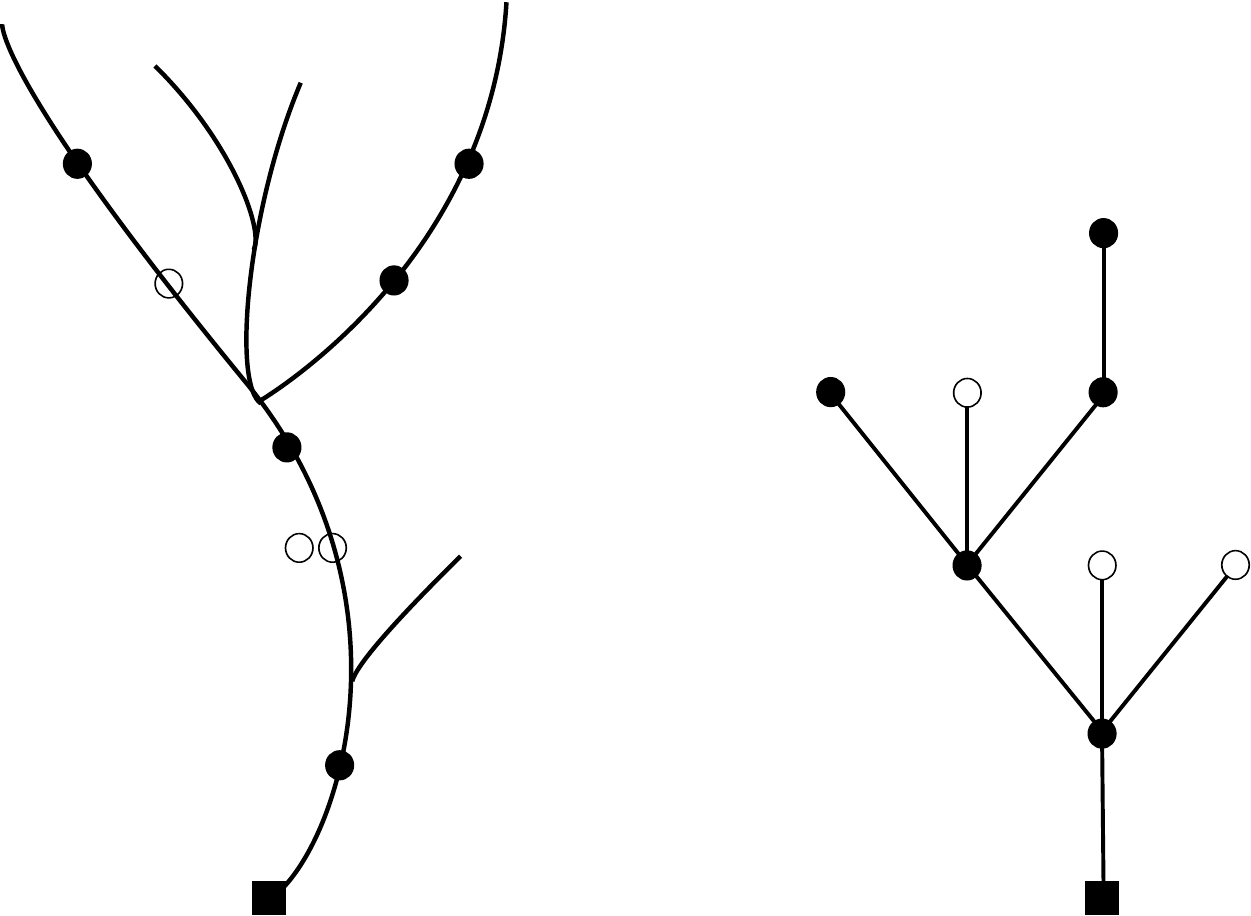}
\end{center}
\caption{An $\R$-tree $\rT$ (left) and a realization of its discretization $\Dop(\rT)$ (right). Black circles are vertices in $V_0$,
and white circles are vertices in $V_1$.
The square corresponds to the root. 
A vertex in $V_1$ is never an ancestor in $\Dop(\rT)$.
The two neighbouring white circles on the left indicate that two points have been sampled at the same spot; since vertices in $V_1$ are sampled according to the measure $\mu-\ell_\rT$, this is possible only if the measure $\mu$ has atoms.}
\end{figure}

\begin{definition}
\label{def:discret1}
Let $\rT=(\rV,d,\root)$ be a rooted $\R$-tree. Let $V_0$ be a subset of $\rV$ containing $\root$ and $V_1$ be a multiset of elements of $\rV$, formally, this can be defined as a counting measure on $\rV$. Suppose that $V_0$ and $V_1$ are boundedly finite in the sense that they contain a finite number of elements in each ball of finite radius. The \emph{discretized tree} $\Dop[\rT][V_0,V_1]$ is the rooted discrete tree $T$ with vertex set $V_0 \cup V_1$ and whose ancestral relation $\prec_{T}$ is defined as follows:
\[
\forall v,w\in V_0 \cup V_1: v \prec_{T} w \iff v \prec_\rT w \textbf{\tand} v \in V_0.
\]
\end{definition}
See Figure~\ref{fig:discret} for a graphical illustration.
\begin{example}
\label{ex:discret1}
If $\rT$ is the tree reduced to the root element $\root$, $V_1$ is the multi-set that contains $n$ times the root $\root$, and $V_0$ is the empty set, the discretized tree $\Dop[\rT][V_0,V_1]$ has $n$ edges adjacent to the root $\root$.
\end{example}
\begin{definition}
\label{def:discret2}
For a tree $\rT = (\rV,d,\root,\mu) \in\Trlinf$, define a random, rooted, discrete tree $\Dop[\rT]\in\M{\Tdinf}$ by $\Dop[\rT] = \Dop[\rT][V_0\cup\{\root\},V_1]$, where
\begin{itemize}
	\item $V_0$ is the set of atoms of a Poisson process on $\rV$ with intensity measure $\ell_\rT$, and
	\item $V_1$ is the multiset of atoms of a Poisson process on $\rV$ with intensity measure $\mu - \ell_\rT$.
\end{itemize}

\end{definition}
We will see below that the map $\Dop$ is actually continuous and thus can be extended to a map $\Dop:\M{\Trlinf}\to \M{\Tdinf}$ (see Section~\ref{sec:extension}). Standard properties of Poisson processes now yield the following commutation relation between $\Srop$, $\Sop$ and $\Dop$:
\begin{lemma}
 \label{lem:commutation}
We have the following equality of maps from $\M{\Trlinf}$ to $\M{\Tdinf}$:
$$\Dop \circ \Srop = \Sop \circ \Dop.$$
\end{lemma}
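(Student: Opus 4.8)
The plan is to reduce the claimed identity of maps to a statement about a single \emph{deterministic} tree and then to verify it by the elementary thinning, superposition and mapping properties of Poisson processes. Since $\Srop$ is (the extension of) a measurable map $\Trlinf\to\Trlinf$ while $\Dop$ and $\Sop$ are Markov kernels, both $\Dop\circ\Srop$ and $\Sop\circ\Dop$ are Markov kernels from $\Trlinf$ to $\Tdinf$, and two such kernels agree as maps $\M{\Trlinf}\to\M{\Tdinf}$ as soon as they agree on Dirac masses. Hence it suffices to fix $\rT=(\rV,d,\root,\mu)\in\Trlinf$ and to show that the random discrete trees $\Dop[\Srop(\rT)]$ and $\Sop(\Dop[\rT])$ have the same law; the general identity then follows by integrating against the input measure.

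First I would assemble the data for the left-hand side. Write $\rT'=\Srop(\rT)=(\rV,d',\root,\mu')$. Since $\Srop$ leaves the vertex set unchanged and multiplies lengths on and off the spine by the positive factors $q$ and $p$, it preserves geodesics and hence the ancestral relation and the spine: $\preceq_{\rT'}=\preceq_\rT$ and $\Sp(\rT')=\Sp(\rT)$. Consequently the length measure rescales in the same way, namely $\ell_{\rT'}=p\,\ell_\rT+(q-p)\,\ell_\rT(\cdot\cap\Sp(\rT))$, and therefore $\mu'-\ell_{\rT'}=p(\mu-\ell_\rT)$. By Definition~\ref{def:discret2}, the tree $\Dop[\rT']$ is thus built, via the relation $\preceq_\rT$, from two independent Poisson processes: a process $V_0'$ of intensity $\ell_{\rT'}$, i.e.\ of intensity $p\,\ell_\rT$ on $\rV\setminus\Sp(\rT)$ and $q\,\ell_\rT$ on $\Sp(\rT)$, together with a process $V_1'$ of non-ancestral points with intensity $p(\mu-\ell_\rT)$.

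Next I would analyse the right-hand side, where the crux is to identify the spine of $T:=\Dop[\rT]$. I claim that almost surely
\[
\Sp(T)\setminus\{\root\}=V_0\cap(\Sp(\rT)\setminus\{\root\}),
\]
i.e.\ the spine of the discrete tree consists exactly of the $V_0$-points lying on the spine of $\rT$. Indeed, a point $v\in V_0$ lies on $\Sp(T)$ iff it has an infinite increasing chain of $V_0$-descendants (the $V_1$-points being never ancestors). As $\mu$ is boundedly finite and $\mu\ge\ell_\rT$, the intensity $\ell_\rT$ is boundedly finite, so every ball carries only finitely many $V_0$-points; hence such a chain must be unbounded and therefore defines an end of $\rT$ through $v$, forcing $v\in\Sp(\rT)$. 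Conversely, each of the finitely many ends of $\rT$ has infinite length, so almost surely carries infinitely many $V_0$-points, whence every $v\in V_0\cap\Sp(\rT)$ does lie on $\Sp(T)$. Given this identification, applying $\Sop$ to $T$ retains the root, each point of $V_0\cap\Sp(\rT)$ with probability $q$, each point of $V_0\setminus\Sp(\rT)$ with probability $p$, and each point of $V_1$ with probability $p$, the ancestral relation of the result being the restriction of $\preceq_\rT$ with the surviving $V_1$-points declared non-ancestral.

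Finally I would match the two sides. By the thinning theorem the surviving points of $V_0\cap\Sp(\rT)$, of $V_0\setminus\Sp(\rT)$ and of $V_1$ are independent Poisson processes with intensities $q\,\ell_\rT(\cdot\cap\Sp(\rT))$, $p\,\ell_\rT(\cdot\cap(\rV\setminus\Sp(\rT)))$ and $p(\mu-\ell_\rT)$, respectively. By the superposition theorem the first two combine into a single Poisson process of intensity $p\,\ell_\rT+(q-p)\,\ell_\rT(\cdot\cap\Sp(\rT))=\ell_{\rT'}$, of the same law as $V_0'$, while the third has the same law as $V_1'$. Since both $\Dop[\Srop(\rT)]$ and $\Sop(\Dop[\rT])$ then assemble a discrete tree from point processes of identical joint law, using in both cases the ancestral relation $\preceq_\rT$ with the $V_1$/$V_1'$-points non-ancestral, the two random trees have the same law; integrating over $\rT$ yields $\Dop\circ\Srop=\Sop\circ\Dop$. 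The main obstacle is precisely the almost-sure spine identification of the preceding paragraph, which is where local compactness (bounded finiteness of $\ell_\rT$) and the finiteness of the number of ends are genuinely used; the rest is bookkeeping of Poisson intensities.
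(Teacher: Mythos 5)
Your proof is correct and follows exactly the route the paper intends: the paper itself gives no proof of Lemma~\ref{lem:commutation}, merely asserting that it follows from ``standard properties of Poisson processes'', and your argument supplies precisely those details (reduction to Dirac masses, the computation $\mu'-\ell_{\rT'}=p(\mu-\ell_\rT)$, and thinning/superposition). The one genuinely nontrivial ingredient you make explicit --- the almost-sure identification $\Sp(\Dop[\rT])\setminus\{\root\}=V_0\cap(\Sp(\rT)\setminus\{\root\})$ via bounded finiteness of $\ell_\rT$ and the finite number of ends --- is also the fact the paper silently relies on (cf.\ the footnote in the proof of Lemma~\ref{lem:D_injective}), and you justify it correctly.
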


The (omitted) proof is based on the simple relation between Poisson processes on $\Srop(\rT)$ and $\rT$:
the Poisson process on $\Srop(\rT)$ is distributed as the thinning of the Poisson process on $\rT$, in which each atom is removed independently with a probability depending on its position: $1-q$ if the atom lies on $\Sp(\rT)$, and $p$ otherwise.

Lemma~\ref{lem:commutation} gives a way of constructing self-similar discrete trees from self-similar $\R$-trees. The following theorem, the main result of this article, says that all self-similar discrete trees arise this way.

\begin{theorem}
\label{th:self_similar}
Let $p,q\in(0,1)$. There exists a one-to-one correspondence between $(p,q)$-self-similar random rooted discrete trees $T \in \M{\Tdinf}$ and $(p,q)$-self-similar random rooted measured $\R$-trees $\rT\in\M{\Trlinf}$ given by 
\[
 T = \Dop[\rT].
\]
\end{theorem}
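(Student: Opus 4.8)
The plan is to prove the theorem by showing that $\Dop$ restricts to a bijection between $(p,q)$-self-similar $\R$-trees and $(p,q)$-self-similar discrete trees, exhibiting a two-sided inverse at the level of laws. The first step is to check that the correspondence is well defined, i.e.\ that $\Dop$ sends self-similar $\R$-trees to self-similar discrete trees. If $\rT\in\M{\Trlinf}$ satisfies $\rT\deq\Srop(\rT)$, then applying the measurable map $\Dop$ and invoking the commutation relation of Lemma~\ref{lem:commutation} gives
\[
\Dop[\rT]\deq\Dop[\Srop(\rT)]=\Sop(\Dop[\rT]),
\]
so that $\Dop[\rT]$ is $(p,q)$-self-similar. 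It then remains to prove injectivity and surjectivity, for which I would construct a reconstruction map $\Phi$ on laws with $\Phi\circ\Dop=\mathrm{id}$ and $\Dop\circ\Phi=\mathrm{id}$ on self-similar trees.

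The construction of $\Phi$ is the core of the argument. Given a $(p,q)$-self-similar $T\in\M{\Tdinf}$, the fixed-point identity $T\deq\Sop(T)$ allows me to build, by Kolmogorov extension, a stationary refinement sequence $(T_k)_{k\ge0}$ with $T_k\deq T$ and $\Sop(T_{k+1})=T_k$ almost surely, so that each $T_{k+1}$ is a random un-contraction of $T_k$. Matching this against the commutation relation, $T_k$ should be thought of as $\Dop$ applied to $\rT$ enlarged off-spine by $p^{-k}$ and on-spine by $q^{-k}$ and sampled by a unit-rate Poisson process. I therefore embed each $T_k$ into a measured $\R$-tree by replacing edges by segments carrying i.i.d.\ $\mathrm{Exp}(1)$ lengths (to match the Poisson gap structure of $\Dop$), recording atomic mass at the leaves, and then rescaling all distances and masses off-spine by $p^{k}$ and on-spine by $q^{k}$. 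As $k\to\infty$ the sampling resolution tends to infinity, and I claim these rescaled embeddings converge in the GHP sense to a limit $\rT:=\Phi(T)$; the essential point is that branch points hidden between consecutive sampled vertices at level $0$ only become visible at higher levels, so the full branching structure (and the separation of the $\ell_\rT$ and $\mu-\ell_\rT$ components) is recovered only in the limit.

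To make the limit rigorous I would argue ball by ball: fixing a radius $R$ and normalising the restricted measure, the truncated trees lie in a space to which the compactness of $\Trlone$ (Lemma~\ref{lem:Trlone_compact}) applies, giving tightness and subsequential GHP limits; consistency of the stationary sequence across scales forces these to agree, producing a genuine limit $\rT$, while closedness of $\Trlinf$ (Proposition~\ref{prop:Trlinf_closed}) guarantees that $\mu\ge\ell_\rT$ persists and $\rT\in\Trlinf$. The scaling built into the construction makes $\rT\deq\Srop(\rT)$, so $\rT$ is $(p,q)$-self-similar. Finally I would verify the two inverse identities. Since $\Dop$ of the level-$k$ embedding equals $T_k\deq T$ for every $k$, passing $\Dop$ through the GHP limit (using its continuity) yields $\Dop[\Phi(T)]\deq T$, proving surjectivity and well-definedness of $\Phi^{-1}$. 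Conversely, starting from a self-similar $\rT$, coupling the refinement sequence of $\Dop[\rT]$ to the $\Dop$-images of a reverse $\Srop$-sequence of $\rT$ via the commutation relation, and applying a law of large numbers to the Poisson samples (whose rescaled empirical length and mass measures converge to $\ell_\rT$ and $\mu-\ell_\rT$), shows $\Phi(\Dop[\rT])\deq\rT$, proving injectivity.

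I expect the main obstacle to be precisely this GHP convergence together with the identification of the limit: controlling the reconstructed metric uniformly on balls of every radius, showing that neither length nor mass is lost in the limit so that $\mu\ge\ell_\rT$ survives, and proving via a law of large numbers that the limiting length and mass measures are exactly $\ell_\rT$ and $\mu$. The delicate feature is that any single discretization records only the combinatorial tree, so the entire metric and measure structure must be extracted from the asymptotics of the refinement sequence rather than from any finite level.
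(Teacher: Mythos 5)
Your overall strategy --- embed the discrete tree in an $\R$-tree, rescale, and extract a GHP limit whose discretization is $T$ --- is the right one, and your easy direction (via Lemma~\ref{lem:commutation}) matches the paper. But the hard direction rests on a load-bearing false claim: $\Dop$ applied to the level-$k$ embedding does \emph{not} equal $T_k$, not even in law. For instance, if $T_k$ is a single edge, its $\mathrm{Exp}(1)$-length embedding carries a $\mathrm{Poi}(\mathrm{Exp}(1))$, i.e.\ geometric, number of sampled non-root vertices, not one. What is true, and what the paper proves as Lemma~\ref{lem:shrink_coupling}, is only an asymptotic coupling: after rescaling by $p^n,q^n\to 0$, the Bernoulli thinning $\Sop[p^n,q^n](T)$ and the Poisson discretization $\Dop[\Srop[p^n,q^n](\tembed{T})]$ agree on any ball of radius $m$ with probability at least $1-\ep$ for $n$ large. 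Establishing this coupling is an essential quantitative step, not a bookkeeping identity, and without it your conclusion $\Dop[\Phi(T)]\deq T$ does not follow. (Incidentally, once you have this coupling you do not need the un-contraction sequence $(T_k)$ or the Kolmogorov extension at all: the paper simply rescales the embedding of $T$ itself by $p^n,q^n$, since self-similarity gives $\Sop[p^n,q^n](T)=T$ in law, and convergence in law is all the theorem requires.)

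The two remaining gaps are the identification of the limit and the tightness. ``Consistency of the stationary sequence across scales'' does not force subsequential GHP limits to agree; the paper identifies the limit by proving that $\Dop$ is \emph{injective} on $\M{\Trlinf}$ (Lemma~\ref{lem:D_injective}), itself a nontrivial result resting on the distance-matrix and exchangeable-partial-order machinery of Section~2 plus a pruning argument, and this same injectivity is what upgrades $\Dop(\Srop(\rT))=\Dop(\rT)$ to $\Srop(\rT)=\rT$ at the end. Likewise, compactness of $\Trlone$ applied ball by ball does not by itself give tightness of the rescaled embeddings: one must deduce uniform control of $\mu_n(\rT_n^{\le r})$ and of the number of ends of $\rT_n$ from the convergence of the \emph{discretizations}, which is precisely the content of Proposition~\ref{prop:tightness} and occupies most of Section~3. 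These three ingredients --- the coupling lemma, the injectivity of $\Dop$, and Proposition~\ref{prop:tightness} --- are where the real work lies, and your proposal either asserts them as exact identities or replaces them with appeals to consistency that do not close the argument.
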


The methods used to prove Theorem~\ref{th:self_similar} will also allow to characterize \emph{compatible sequences} of random trees:

\begin{definition}
\label{def:compatible}
For a random rooted tree $T = (V,E,\root) \in \M{\Tdfin}$ with $\#V = n+1$ almost surely, define for $m\le n$, $\Cop{T}{m} = \Cop{T}{V'\cup\{\root\}}$, where $V'$ is a uniformly chosen subset of $V\backslash\{\root\}$ with $m$ (distinct) elements. A family of random rooted trees $(T_n)_{n\in \N}\in \M{\Tdfin}$ with $\#V(T_n)=n+1$ almost surely is called \emph{compatible}, if
for each $n \geq m \geq 1$,  we have $T_m \stackrel{\text{law}}{=} \Cop{T_n}{m}$.
\end{definition}

\begin{theorem}
\label{th:inverse_limit}
There exists a one-to-one correspondence between compatible families of random rooted trees $(T_n)_{n\in \N}$ and random rooted measured $\R$-trees $\rT\in \M{\Trlone}$, given by $T_n=\Dop[\rT][n]$ for $n\ge1$, where $\Dop[\rT][n]$ is the tree $\Dop[\rT]$ conditioned on having $n+1$ vertices.
\end{theorem}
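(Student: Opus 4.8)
The plan is to treat the two directions of the correspondence separately: first that $T_n = \Dop[\rT][n]$ always produces a compatible family, and then, in the harder direction, that every compatible family arises in this way from a unique $\rT\in\M{\Trlone}$.

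For the forward direction, fix $\rT\in\M{\Trlone}$ and record the elementary description of $\Dop[\rT][n]$. Since $\mu$ is a probability measure with $\mu\ge\ell_\rT$, the numbers of points of $V_0$ and $V_1$ are independent Poisson variables with means $\ell_\rT(\rV)$ and $1-\ell_\rT(\rV)$, so the total number of non-root vertices of $\Dop[\rT]$ is Poisson with mean $\mu(\rV)=1$; conditionally on it equalling $n$, the vertices are $n$ i.i.d.\ points of law $\mu$, each marked independently as belonging to $V_0$ with probability $\tfrac{d\ell_\rT}{d\mu}$ and to $V_1$ otherwise. Thus $\Dop[\rT][n]$ is exactly the discretization of $n$ such i.i.d.\ marked samples. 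Compatibility is then immediate: a uniform $m$-subset of $n$ i.i.d.\ marked samples is itself $m$ i.i.d.\ marked samples, and by Definition~\ref{def:discret1} the ancestral relation of the discretization restricted to a sub-collection of points coincides with the discretization of that sub-collection, whence $\Cop{T_n}{m}\deq\Dop[\rT][m]=T_m$.

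The reverse direction is the substantial one, and is a de Finetti / Aldous--Hoover type statement in disguise. Given a compatible family $(T_n)$, I would first assemble it into a single exchangeable object: labelling the $n$ non-root vertices of $T_n$ by a uniform random bijection onto $[n]$ turns the relation $T_m\deq\Cop{T_n}{m}$ into consistency under restriction of labels, so these laws form a projective system and Kolmogorov's extension theorem furnishes an infinite random rooted tree $T_\infty$ on the vertex set $\N$ whose ancestral relation $\prec_{T_\infty}$ is an exchangeable partial order and whose restriction to $[n]$ has the law of $T_n$. The heart of the proof is then to represent $T_\infty$ as sampled from an $\R$-tree: using the representation theory for exchangeable partial orders together with the GHP-reconstruction machinery developed for Theorem~\ref{th:self_similar}, I would produce a random measured $\R$-tree $\rT=(\rV,d,\root,\mu)$ such that $T_\infty$ is obtained by drawing i.i.d.\ points $x_1,x_2,\dots$ of law $\mu$, marking them, and taking the induced discretized ancestral relation. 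Concretely $\rT$ is recovered as an almost sure limit: the empirical measure $n^{-1}\sum_{i\le n}\delta_{x_i}$ converges to $\mu$, while the graph metric of $T_\infty$ restricted to $[n]$, rescaled by $1/n$, converges in the GHP sense, distances being read off from the asymptotic counts of the interior (hence necessarily type-$V_0$) vertices lying along paths, which pins down the length measure.

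I expect the main obstacle to lie exactly here: showing that this GHP limit exists, that the resulting $\mu$ is a probability measure dominating $\ell_\rT$ (so that $\rT\in\Trlone$, the domination $\mu\ge\ell_\rT$ being forced by the requirement that the skeleton be detectable through the type-$V_0$ vertices), and that $\rT$ has finitely many ends, inherited from the local finiteness of the $T_n$. By construction the restriction of $T_\infty$ to $[n]$ then has the law of $\Dop[\rT][n]$, giving $T_n=\Dop[\rT][n]$. Bijectivity follows from the same reconstruction: since $\rT$ is an almost sure measurable functional of $(T_n)$, the law of the family determines the law of $\rT$, so distinct elements of $\M{\Trlone}$ yield distinct compatible families, while surjectivity onto compatible families is the content of the two directions above; the compactness of $\Trlone$ (Lemma~\ref{lem:Trlone_compact}) would be invoked to guarantee that the reconstructed limit indeed lives in $\Trlone$ and that the relevant laws are tight, closing the correspondence.
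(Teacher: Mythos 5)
Your forward direction is correct and coincides with the paper's: conditioning the Poisson sample on its total size gives i.i.d.\ marked points, a uniform $m$-subset of $n$ i.i.d.\ samples is again i.i.d., hence $\Cop{T_n}{m}\deq\Dop[\rT][m]$.

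The reverse direction, however, contains a genuine gap, and it sits exactly where you say you ``expect the main obstacle to lie''. The plan you outline --- label the vertices exchangeably, pass to an exchangeable partial order on $\N$ via Kolmogorov extension, extract a distance matrix and a candidate measure by de Finetti limits, and then ``produce'' the measured $\R$-tree as a GHP limit --- is precisely the construction the authors describe in their introduction as one they initially pursued and then abandoned. The sentence ``I would produce a random measured $\R$-tree $\rT$ such that $T_\infty$ is obtained by sampling from it'' \emph{is} the hard direction, and nothing in your sketch supplies it: it is not shown that the completion of the random pseudo-metric space on $\N$ is a measurable, almost surely well-defined element of $\M{\Trlone}$; it is not shown that the rescaled restrictions of $T_\infty$ converge in GHP; and the compactness of $\Trlone$ cannot be ``invoked'' at this point, because compactness only yields subsequential limits of a sequence that already lives in $\M{\Trlone}$, which your construction does not provide. (Your concern about finitely many ends is also moot here: elements of $\Trlone$ are compact.)

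The paper closes this gap by a different route built on its already-established machinery: with $B_n\sim\mathrm{Bin}(n,1/n)$ one has $\Cop{T_n}{B_n}=\Sop[1/n](T_n)\to T_B$ with $B\sim\Poi(1)$; Lemma~\ref{lem:shrink_coupling} couples $\Sop[1/n](T_n)$ with $\Dop[\Srop[1/n](\tembed{T_n})]$ with high probability; and crucially $(\Srop[1/n](\tembed{T_n}))_{n\ge1}$ is a sequence \emph{in} $\M{\Trlone}$, so the compactness of $\Trlone$ (Proposition~\ref{prop:topologies_2}) genuinely applies to give a subsequential limit $\rT$, which is then identified through $\Dop(\rT)=T_B$ by continuity of $\Dop$ (Theorem~\ref{th:topologies}) and is unique by injectivity of $\Dop$ (Lemma~\ref{lem:D_injective}). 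To make your exchangeability route rigorous you would essentially have to redo the content of Section~\ref{sec:2}; the embedding $\tembed{\cdot}$ followed by rescaling is what lets the paper avoid constructing the limit object by hand.
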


The cornerstone in the proof of Theorems~\ref{th:self_similar} and~\ref{th:inverse_limit} will be the study of the continuity of the operator $\Dop$ and related topological issues. The following theorem summarizes the results thus obtained:
\begin{theorem}
\label{th:topologies}
The map $\Dop:\Trlinf\to\M{\Tdinf}$ as well as its extension $\Dop:\M{\Trlinf}\to \M{\Tdinf}$ are continuous, closed, injective maps. In other words, they are homeomorphisms onto their images and these are closed subsets of $\M{\Tdinf}$. 
\end{theorem}


\subsection*{Overview of the paper}


We start by proving Theorem~\ref{th:topologies} in Sections~\ref{sec:2} and \ref{sec:3}. In Section~\ref{sec:2} we consider the space $\Trlone$ only, the main result here is Proposition~\ref{prop:topologies_2} which shows that $\Trlone$ is compact and that the restriction of $\Dop$ to $\Trlone$ is continuous, closed and injective. The proof relies on the use of random distance matrices and certain exchangeable partial orders\footnote{We remark that exchangeable partial orders have been previously studied in generality by Janson \cite{Janson2011}, who provided a limiting representation 
based on the dense graph limits introduced by Lov\'{a}sz and Szegedy \cite{Lovasz2006}. 
It is however not clear to us how to make an efficient use of this representation for the questions studied here.}, together with a somewhat intricate analysis of the continuity of a certain class of polynomial test functions\footnote{A previous version of this article (published on the arXiv) contained a different, but incomplete proof.}. 

In Section~\ref{sec:3}, we prove Theorem~\ref{th:topologies} in full generality, i.e.\ on the space $\Trlinf$ of infinite trees. This is probably the most technical section; we make use several different ways of truncating the trees and some technical arguments to bound the number of ends in the trees. Precompactness arguments play an important role.

In Section~\ref{sec:4}, Theorems~\ref{th:self_similar} and \ref{th:inverse_limit} are proven. At the heart of the proofs is the following idea: First, we turn a discrete tree into an $\R$-tree by assigning length $1$ to each edge. We then show that rescaling and then discretizing that $\R$-tree yields with high probability the same result as contracting the original tree, one an arbitrarily large ball (Lemma~\ref{lem:shrink_coupling}). Together Theorem~\ref{th:topologies}, this allows to construct the real trees $\rT$ in Theorems~\ref{th:self_similar} and \ref{th:inverse_limit} as scaling limits of the discrete tree $T$, respectively, the sequence of discrete trees $T_n$.

In Section~\ref{sec:5}, motivated by the correspondence between self-similar discrete trees and self-similar $\R$-trees established in Theorem~\ref{th:self_similar}, we study examples of self-similar $\R$-trees. We give there an overview of the generality of examples that can be constructed.
Like in the case of self-similar real-valued processes, see~\cite{O'Brien1985}, it seems out of reach to completely characterize this family. We therefore consider in Section~\ref{sec:6} a specific class of $(p,q)$-self-similar trees, namely those that are invariant with respect to translation along the spine (we suppose here for simplicity that the spine consists of a single infinite ray). In particular, in the case of self-similar trees consisting of a single spine to which iid subtrees are attached, we relate the construction of the corresponding $\R$-trees to the quasi-stationary distributions of linear-fractional subcritical Galton--Watson processes, see  Proposition~\ref{prop:iid} and  Remark~\ref{rem:qsd}.

In the short Section~\ref{sec:7}, we describe another attempt to prove Theorem~\ref{th:inverse_limit} using exchangeability, which we initially pursued but dropped because of its drawbacks.

Finally, an appendix recalls some notions on the space of discrete trees and $\R$-trees that we consider in this work.

\subsection*{Acknowledgments} 
We are grateful to Itai Benjamini for asking us a question which motivated this study. 
We are also deeply indebted to Ohad Feldheim; example 1 in Section~\ref{sec:5} was found 
following discussions with him before the general statement of Theorem~\ref{th:self_similar} was clear to us. Further, we thank Tom Meyerovitch, Gr\'egory Miermont and Ron Peled for useful discussions. Finally, an anonymous referee gave several useful comments and informed us about the article \cite{ALW15}.

\section{The map \texorpdfstring{$\Dop$}{D}: finite trees}
\label{sec:2}
In this section, we study the action of $\Dop$ on the space $\Trlone$, i.e.\ those measured trees in $\Trlinf$ whose measure is a probability measure. The results are summarized in the following proposition:
\begin{proposition}
 \label{prop:topologies_2}
The restriction of the map $\Dop$ to $\Trlone$ is a homeomorphism onto its image. Furthermore, the spaces $\Trlone$ and $\Dop(\Trlone)$ are compact.
\end{proposition}

\def\DM{\mathtt{DM}}
\def\DMT{\widetilde{\DM}}

\begin{figure}[h]
 \def\svgwidth{10cm}
 \centering
 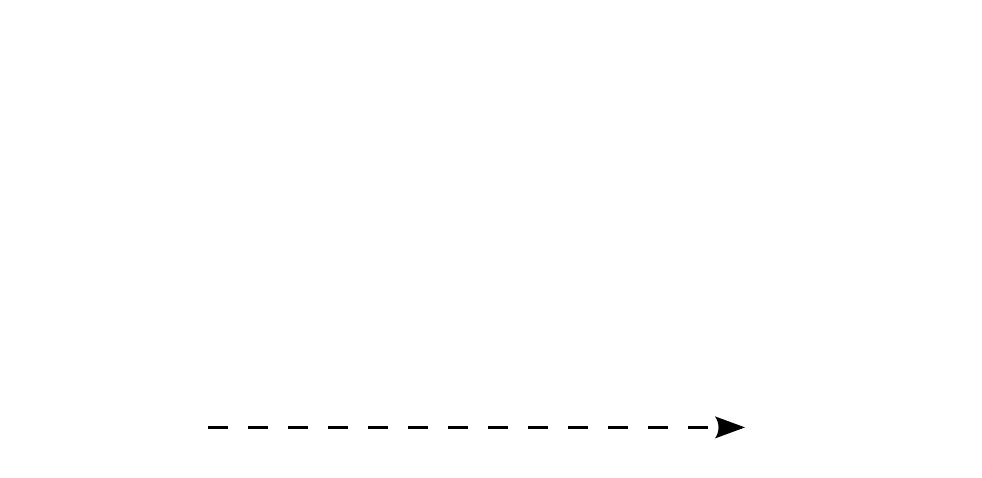
 \caption{The different spaces used to prove Proposition~\ref{prop:topologies_2} and the maps between them.}
 \label{fig:diagram}
\end{figure}

For this we rely on two different representations of random rooted measured trees, one in terms of its distance matrix distribution defined in Section~\ref{sec:Rtrees}, the other in terms of an exchangeable partial order on $\N$ (the relationships between these representations is summarized in Figure~\ref{fig:diagram}). Set $\N^*=\{1,2,\ldots\}$. We recall that given a tree $\rT = (\rV,d,\root,\mu)\in\Trone$, its \emph{distance matrix distribution} $\DM(\rT)$ is defined as the push-forward of the probability measure $\delta_\root\otimes \mu^{\otimes \N^*}$ by the map 
$$(x_i)_{i \in \N} \mapsto d(x_i,x_j)_{(i, j) \in \N \times \N}.$$
Since the distance between any two points of the tree $\rT\in\Trlone$ is less than or equal to 1, its distance matrix distribution
 $\DM(\rT)$ is a probability measure on $[0,1]^{\N\times\N}$, endowed with the product topology.
 
The usefulness of the distance matrix distribution comes from the fact that it convergence determining as recalled in the appendix. Here, we will use the following fact, which follows from the second part of Lemma~\ref{lem:precompact} together with Lemma~\ref{lem:GP_GHP}:
\begin{lemma}
 \label{lem:super_agreable}
 A sequence $\rT_n\in\Trlone$ converges to a limit $\rT\in\Trinfall$ if and only if its distance matrix distributions $\DM(\rT_n)$ converge. In this case, $\rT\in\Trone$ and $\DM(\rT) = \lim_{n\to\infty} \DM(\rT_n)$.
\end{lemma}

Denote by $\PSpace$ the space of random partial orders $\EPO$ on $\N$ which are invariant under finite permutations of $\N^*$ and such that $0\EPO n$ for every $n\in\N^*$. 
Through the map $\EPO \, \mapsto (\ind_{i \EPO j})_{(i, j) \in \N \times \N}$,
we can identify $\PSpace$ with a closed subspace of the space of probability measures on $\{0,1\}^{\N\times\N}$, so that $\PSpace$ is compact. Every tree $\rT\in\Trlone$ then defines an element $\varphi(\rT)$
in $\PSpace$ as follows: Fix a representative of the tree $\rT$, also denoted by $\rT$.
Let  $(X_1,S_1),(X_2,S_2),\ldots$ be an iid sequence of random variables in $\rV\times \{0,1\}$ with law $\ell_\rT\otimes \delta_0 + (\mu-\ell_\rT)\otimes \delta_1$. This means that $X_1,X_2,\ldots$ are iid according to $\mu$ and  $S_i = 0$ if $X_i$ was drawn according to $\ell_\rT$ and $S_i = 1$ otherwise. Set $(X_0,S_0) = (\root,0)$. We then define the transitive relation $\EPO[\rT]$ on $\N$ by
\begin{equation}
i \EPO[\rT] j \iff X_i \prec_{\rT} X_j\tand S_i = 0,
\label{eq:EPO}
\end{equation}
and denote its law by $\varphi(\rT)$. The relation $\EPO[\rT]$ is not reflexive, therefore is not a partial order.

Note that for a tree $\rT\in\Trlone$, the 
binary relation
$\EPO[\rT]$ is in direct relation to the discretization $\Dop(\rT)$. Indeed, if one denotes by $\EPO[\rT]^N$ the restriction of $\EPO[\rT]$ to the elements $\{0,\ldots,N\}$, with $N\sim\Poi(1)$, then the random rooted tree on $N+1$ vertices with ancestral relation $\EPO[\rT]^N$ has the same distribution as $\Dop(\rT)$. Conversely, given $\Dop(\rT)$, one can define the restrictions $\EPO[\rT]^n$, $n=1,2,\ldots$ by conditioning the tree $\Dop(\rT)$ on having $n+1$ vertices, assigning random labels $\{1,\ldots,n\}$ to the non-root vertices and then defining $\EPO[\rT]^n$ as the ancestral relation of this tree. This uniquely defines $\EPO[\rT]$.

We thus have constructed a bijection $h:\varphi(\Trlone)\to \Dop(\Trlone)$ such that $\Dop = h\circ \varphi$. Moreover, this bijection is a homeomorphism because for every $n$, the map assigning the law of $\Dop(\rT)$ conditioned on having $n+1$ vertices to the law of $\EPO[\rT]^n$ is continuous (as well as its inverse) by finiteness of the corresponding spaces. We have thus proven the following lemma:

\begin{lemma}
 \label{lem:homeomorphism-0}
There exists a homeomorphism $h:\varphi(\Trlone)\to\Dop(\Trlone)$ such that $\Dop = h\circ\varphi$.
\end{lemma}


As a consequence of the previous result, in order to show Proposition~\ref{prop:topologies_2}, it will be enough to prove the following lemma.

\begin{lemma}
 \label{lem:homeomorphism}
$\varphi$ is a homeomorphism between $\Trlone$ and its image $\varphi(\Trlone)$. Moreover, both spaces are compact.
\end{lemma}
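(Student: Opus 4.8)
The plan is to prove Lemma~\ref{lem:homeomorphism} by establishing that $\varphi$ is a continuous bijection from the compact space $\Trlone$ onto its image $\PTSpace\subset\PSpace$, and then invoking the standard fact that a continuous bijection from a compact space to a Hausdorff space is automatically a homeomorphism. Thus the work decomposes into three pieces: (i) compactness of $\Trlone$; (ii) injectivity of $\varphi$; and (iii) continuity of $\varphi$. Compactness of $\Trlone$ is cited as available (Lemma~\ref{lem:Trlone_compact}), and since $\varphi$ is continuous its image $\PTSpace$ is then automatically compact, giving the ``Moreover'' clause for free once (iii) is in hand. So the real content is injectivity and continuity.

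For \textbf{injectivity}, I would argue that the law $\varphi(\rT)$ determines the tree $\rT\in\Trlone$ up to equivalence. The exchangeable relation $\EPO[\rT]$ records, for an iid $\mu$-sample $X_1,X_2,\dots$ (tagged by the marks $S_i$), all ancestral comparisons $X_i\prec_\rT X_j$. Since $\mu$ has full support (as $\mu\ge\ell_\rT$ forces this on $\Trlone$), the sample points are almost surely dense in $\rV$, and the partial order together with the law of the marks should let one reconstruct the metric-measure structure. Concretely, I would recover the masses via the de Finetti / law-of-large-numbers device already displayed in the introduction: the quantity $\mu(\llbracket X_i,X_j\rrbracket)$ and the off-spine/on-spine distinction are recoverable as almost-sure limits of empirical frequencies of the relation $\EPO[\rT]$ restricted to $\{1,\dots,n\}$. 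This reconstruction is essentially the inverse of $\varphi$ and shows two trees with the same $\varphi$-image must be isometric-measure-equivalent. I would lean on the relationship between $\EPO[\rT]$ and the distance matrix distribution $\DM(\rT)$ summarized in Figure~\ref{fig:diagram}, since $\DM$ is already known to characterize a tree in $\Trone$.

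For \textbf{continuity}, which I expect to be the main obstacle, I would show that if $\rT_n\to\rT$ in the GHP topology on $\Trlone$, then $\varphi(\rT_n)\to\varphi(\rT)$ weakly as laws of relations on $\{0,1\}^{\N\times\N}$. The difficulty is that the mark $S_i$ depends on whether $X_i$ was drawn from $\ell_\rT$ or from $\mu-\ell_\rT$, so one must control the convergence of the \emph{decomposition} $\mu=\ell_\rT+(\mu-\ell_\rT)$ under GHP convergence, not just of $\mu$ and $d$ separately. The natural route is to pass through the distance matrix representation: GHP convergence $\rT_n\to\rT$ should give convergence of $\DM(\rT_n)\to\DM(\rT)$ (this is the left vertical map $\DM$ in the diagram), and then convergence of the tagged distance data $D(i,j)_{i,j\ge0}$ through the map $\DMT$, from which the relation $\EPO[\rT]$ is read off. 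The event $\{X_i\prec_\rT X_j\}$ is the event $\{d(X_i,X_j)=d(\root,X_j)-d(\root,X_i)\}$, a closed condition on the distance matrix, so one must check that the limiting sample almost surely avoids the boundary of this event; here full support of $\mu$ and the fact that in $\Trone$ a fixed point is $\mu$-almost-never a branch point play a role in ensuring no mass concentrates on degenerate configurations. The crux is verifying that the joint law of $(D(i,j),S_i)_{i,j}$ depends continuously on $\rT$, i.e.\ that the Poissonian/iid sampling commutes with the GHP limit; I would reduce this to continuity of the polynomial test functions on distance matrices alluded to in the surrounding discussion, and handle the mark $S_i$ by exhibiting it as a measurable-and-continuous-almost-everywhere functional of the labelled sample through the domination $\mu\ge\ell_\rT$.

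Once continuity and injectivity are established, the conclusion is immediate: $\varphi:\Trlone\to\PSpace$ is a continuous injection, $\Trlone$ is compact, $\PSpace$ is metrizable (hence Hausdorff), so $\varphi$ is a homeomorphism onto $\PTSpace:=\varphi(\Trlone)$, and $\PTSpace$ is compact as the continuous image of a compact space. I anticipate that the genuinely technical step, and the one deserving the most care, is pinning down the continuity of the marked sampling under GHP convergence — in particular ruling out that the on-spine/off-spine tagging $S_i$ behaves discontinuously in the limit — whereas injectivity will follow fairly mechanically from the de Finetti reconstruction and the known faithfulness of $\DM$ on $\Trone$.
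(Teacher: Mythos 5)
Your overall architecture is the paper's: compactness of $\Trlone$ (Lemma~\ref{lem:Trlone_compact}), injectivity via the factorization $\DM=\DMT\circ\varphi$ and the faithfulness of $\DM$ (Fact~\ref{fact:Gromov_weak_uniqueness}), and the standard fact that a continuous bijection from a compact space onto a Hausdorff space is a homeomorphism. The injectivity part of your sketch is sound and is essentially Lemma~\ref{lem:DMtilde}. The gap is in your continuity argument, which is where all the real work lies. First, the relation $\EPO[\rT]$ cannot be ``read off'' from the distance data: $i\EPO[\rT] j$ requires both $X_i\prec_\rT X_j$ \emph{and} $S_i=0$, and the mark $S_i$ is independent extra randomness (its conditional law given $X_i$ is a nontrivial Bernoulli with parameter $\frac{d\ell_\rT}{d\mu}(X_i)$), so it is not a functional of the matrix $D$; note also that $\DMT$ goes from partial orders \emph{to} distance matrices, not the other way. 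Second, and more seriously, your proposed portmanteau-style step fails: the set $\{D: D(0,i)+D(i,j)=D(0,j)\}$ is closed with empty interior, hence equals its own boundary, and the limit law charges it with probability at least $\E[\ell_\rT(\llbracket\root,X_j\rrbracket)]>0$ precisely because $\mu\ge\ell_\rT$ on $\Trlone$. So the limiting sample does \emph{not} almost surely avoid the boundary of the ancestral event, and neither full support of $\mu$ nor the non-atomicity of branch points rescues this; the indicator $\ind_{X_i\prec_\rT X_j}$ is genuinely discontinuous on a set of positive limiting probability.

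The paper's Lemma~\ref{lem:varphi_continuous} circumvents this by never invoking almost-sure continuity of indicators. It applies Stone--Weierstrass to test functions $f(D,\EPO)=C\prod D(i,j)^{\beta_{ij}}\prod_l\ind_{a_l\EPO b_l}$ of the \emph{joint} pair and runs an induction on the number $L$ of indicator factors: choosing an index $\alpha$ that appears only as an ancestor, one uses the fact that conditionally on the other samples the event $\{S_\alpha=0,\ X_\alpha\prec_\rT X'\}$ has probability $d_\rT(\root,X')$ and that on this event $X_\alpha$ is uniform on $\llbracket\root,X'\rrbracket$; integrating $X_\alpha$ out turns the expectation into a polynomial in the remaining entries of $D_\rT$, reducing $L$ and landing back in the $L=0$ case, where continuity is just Corollary~\ref{cor:GHP_Gweak}. (The case where every $a_l$ also occurs as some $b_{l'}$ is handled separately by exhibiting a cycle forcing $f\equiv 0$.) This explicit integration-out is the missing idea; without it, or some equivalent coupling argument, your continuity step does not go through.
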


In order to prove Lemma~\ref{lem:homeomorphism}, we will make a detour by the space of random distance matrices.
To wit, to 
every element $\EPO$ from $\PSpace$, we associate a random distance matrix $\widetilde D(i,j)_{i,j\ge0}$ by 
\begin{equation}
\label{eq:Dtilde_bis}
 \widetilde D(i,j) = \lim_{n\to\infty} \frac 1 n \sum_{k=1,k\not\in\{i,j\}}^n \ind_{k \EPO i,\, k \not\EPO j\text{ or }k \EPO j,\, k \not\EPO i}.
\end{equation}
The existence of the limit is provided by de Finetti's theorem applied to the exchangeable sequence $(\ind_{k \EPO i,\, k \not\EPO j\text{ or }k \EPO j,\, k \not\EPO i})_{k\ne i,j}$.
One easily verifies that $\widetilde D$ satisfies the triangle inequality almost surely, whence we call it a random distance matrix. We then denote the map associating the law of the random distance matrix $\widetilde D$ to the (law of)
$\EPO$ by $\DMT:\PSpace \to \M{[0,1]^{\N\times\N}}$. We have the following lemma.

\begin{lemma}
 \label{lem:DMtilde}
 $\DM = \DMT \circ \varphi$ on $\Trlone$.
\end{lemma}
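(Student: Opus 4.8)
The plan is to exhibit the equality of laws by realising both sides on a single probability space. Concretely, I would fix a representative of $\rT$ and the iid sequence $(X_k,S_k)_{k\ge1}$ with law $\ell_\rT\otimes\delta_0+(\mu-\ell_\rT)\otimes\delta_1$, together with $(X_0,S_0)=(\root,0)$, used to define $\EPO[\rT]$ in \eqref{eq:EPO}. On this space I would show that the random distance matrix $\widetilde D$ built from $\EPO[\rT]$ via \eqref{eq:Dtilde_bis} coincides almost surely with $(d(X_i,X_j))_{i,j\ge0}$. Since the latter array has law $\DM(\rT)$ by the definition of the distance matrix distribution, this identifies $\DMT(\varphi(\rT))$ with $\DM(\rT)$, which is exactly the claim.

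First I would unpack the indicator appearing in \eqref{eq:Dtilde_bis}. By \eqref{eq:EPO}, $k\EPO[\rT] i$ means $S_k=0$ and $X_k\prec_\rT X_i$, so, writing $A_i:=\{x\in\rV:x\prec_\rT X_i\}$, one checks directly that
\[
\ind_{k\EPO[\rT] i,\,k\not\EPO[\rT] j\text{ or }k\EPO[\rT] j,\,k\not\EPO[\rT] i}=\ind_{S_k=0}\,\ind_{X_k\in A_i\triangle A_j}.
\]
The next step is the geometric identity $\ell_\rT(A_i\triangle A_j)=d(X_i,X_j)$. Indeed, $A_i\triangle A_j$ consists of the points lying strictly below exactly one of $X_i,X_j$, which up to the branch point $X_i\wedge X_j$ and the endpoints $X_i,X_j$ (a finite set, hence of zero length measure) is precisely the open geodesic segment joining $X_i$ and $X_j$; its length measure is therefore $d(X_i\wedge X_j,X_i)+d(X_i\wedge X_j,X_j)=d(X_i,X_j)$.

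Finally I would invoke the strong law of large numbers. Conditionally on $(X_i,S_i)$ and $(X_j,S_j)$, the variables $(X_k,S_k)_{k\ne i,j}$ remain iid with the law above, and the probability that a single one of them satisfies $S_k=0$ and $X_k\in A_i\triangle A_j$ equals $\ell_\rT(A_i\triangle A_j)=d(X_i,X_j)$, because $S_k=0$ exactly when $X_k$ is sampled according to $\ell_\rT$. Hence the Ces\`aro average in \eqref{eq:Dtilde_bis} converges almost surely to $d(X_i,X_j)$, and since almost sure limits are unique this agrees with the de Finetti limit defining $\widetilde D$; that is, $\widetilde D(i,j)=d(X_i,X_j)$ almost surely. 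The boundary cases are consistent: for $i=0$ one has $A_0=\{x:x\prec_\rT\root\}=\varnothing$, so the event reduces to $k\EPO[\rT] j$ and the limit is $\ell_\rT(A_j)=d(\root,X_j)$, while for $i=j$ the symmetric difference is empty and $\widetilde D(i,i)=0=d(X_i,X_i)$.

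The only genuinely delicate point is the middle step: correctly identifying $A_i\triangle A_j$ with the geodesic segment between $X_i$ and $X_j$ and using that $\ell_\rT$ charges no point, so that the open/closed distinctions at the endpoints and at the branch point are irrelevant. Once this is settled, the conditional law of large numbers does the rest, the iid structure being preserved under conditioning on the two marked endpoints, and I expect no further obstacle.
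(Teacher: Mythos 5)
Your proposal is correct and follows essentially the same route as the paper: realise $\EPO[\rT]$ and the distance matrix on the common probability space of the sequence $(X_k,S_k)_{k\ge0}$, identify the indicator in \eqref{eq:Dtilde_bis} with $\ind_{S_k=0}\ind_{X_k\in\llbracket X_i,X_j\rrbracket}$ up to an $\ell_\rT$-null set, and apply the law of large numbers to get $\widetilde D(i,j)=d(X_i,X_j)$ almost surely. Your treatment of the symmetric difference and the endpoint/branch-point null set is just a more careful spelling-out of the step the paper states in one line.
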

\begin{proof}
 Fix a representative of a tree $\rT = (\rV,d,\root,\mu)\in\Trlone$. Let $((X_i,S_i))_{i\ge0}$ be as above and define $D(i,j) = d(X_i,X_j)$, such that $D$ follows the law $\DM(\rT)$. By the law of large numbers, this gives,
\begin{align*}
 D(i,j) = \ell_\rT(\llbracket X_i,X_j\rrbracket) &= \lim_{n\to\infty}\frac 1 n \sum_{k=1,k\not\in\{i,j\}}^n \ind_{X_k\in\llbracket X_i,X_j\rrbracket,\,S_k = 0}\\
 &= \lim_{n\to\infty}\frac 1 n \sum_{k=1,k\not\in\{i,j\}}^n \ind_{k \EPO[\rT] i,\, k \not\EPO[\rT] j\text{ or }k \EPO[\rT] j,\, k \not\EPO[\rT] i},
\end{align*}
where the last equality follows from the definition of $\EPO[\rT]$. Equation~\eqref{eq:Dtilde_bis} then shows that $D$ follows the law $\DMT(\EPO[\rT])$, which proves the lemma.
\end{proof}

\begin{lemma}
\label{lem:Trlone_compact}
 The space $\Trlone$ is compact.
\end{lemma}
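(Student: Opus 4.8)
The plan is to deduce compactness from two ingredients: a \emph{uniform} geometric control on the trees in $\Trlone$ coming from the domination $\mu\ge\ell_\rT$, which yields relative compactness for the GHP topology, together with the closedness of $\Trlone$ already recorded in Proposition~\ref{prop:Trlinf_closed}. The point of the domination is that, for $\rT=(\rV,d,\root,\mu)\in\Trlone$, the total length is finite and in fact $\ell_\rT(\rV)\le\mu(\rV)=1$. Since $d(x,y)=\ell_\rT(\llbracket x,y\rrbracket)\le\ell_\rT(\rV)\le 1$ for all $x,y$, every such tree has diameter at most $1$, and every $\mu$ has mass $1$; so the only nontrivial input needed for a standard GHP precompactness criterion is a bound on covering numbers that is \emph{independent of $\rT$}.

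First I would establish the key estimate: for every $\epsilon>0$, each $\rT\in\Trlone$ can be covered by at most $2/\epsilon+1$ balls of radius $\epsilon$. Let $F\subset\rV$ be a maximal $\epsilon$-separated set (pairwise distances $\ge\epsilon$); by maximality the balls of radius $\epsilon$ centred at the points of $F$ cover $\rV$, so it suffices to bound $\#F$. The open balls of radius $\epsilon/2$ centred at the points of $F$ are pairwise disjoint (a common point of two of them would force the centres to be at distance $<\epsilon$). At most one point $x\in F$ satisfies $d(x,\root)<\epsilon/2$, since two such points would be at distance $<\epsilon$. For every other $x\in F$ let $x^-\in\llbracket\root,x\rrbracket$ be its ancestor at distance exactly $\epsilon/2$; the segment $\llbracket x^-,x\rrbracket$ has length $\epsilon/2$ and, minus its endpoint $x^-$, lies in the open ball of radius $\epsilon/2$ around $x$. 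As these balls are disjoint the corresponding segments are disjoint, so additivity of $\ell_\rT$ gives $(\#F-1)\,\epsilon/2\le\ell_\rT(\rV)\le1$, whence $\#F\le 2/\epsilon+1$. In particular each $\rT\in\Trlone$ is totally bounded and complete, hence compact.

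With the uniform bounds on diameter, total mass and $\epsilon$-covering numbers in hand, the standard relative-compactness criterion for the GHP topology (cf.\ the appendix) shows that $\Trlone$ is relatively compact in the space of compact rooted measured $\R$-trees; note that the $\R$-tree property and having mass $1$ pass to GHP limits, and that a GHP limit of compact trees of length at most $1$ is again compact, so in particular has no unbounded ray and thus trivially finitely many ends. It then remains only to invoke the closedness of $\Trlone$ from Proposition~\ref{prop:Trlinf_closed}, i.e.\ that the domination $\mu\ge\ell_\rT$ is preserved in the limit; relative compactness together with closedness yields compactness. The main obstacle is precisely the covering estimate of the second paragraph: everything hinges on converting the measure-theoretic domination $\mu\ge\ell_\rT$ into the purely metric statement of uniform total boundedness, and the disjoint-ball argument is what makes this work with a bound depending only on $\epsilon$.
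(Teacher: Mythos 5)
Your covering-number estimate is correct and is a nice self-contained derivation of the precompactness half: the disjoint balls of radius $\epsilon/2$ around an $\epsilon$-separated set, each swallowing a segment of length $\epsilon/2$ of total length measure at most $\mu(\rV)=1$, do give a uniform bound $\#F\le 2/\epsilon+1$, hence uniform total boundedness together with the trivial bounds on diameter and mass. This is essentially the content that the paper obtains by citing Lemma~\ref{lem:precompact} (which in turn rests on a covering lemma from the literature), so up to this point you are on the same track, just more explicit.

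The genuine gap is in how you handle closedness. You propose to ``invoke the closedness of $\Trlone$ from Proposition~\ref{prop:Trlinf_closed}'', but in the paper that proposition is \emph{deduced from} Lemma~\ref{lem:Trlone_compact} (its proof begins ``The space $\Trlone$ is compact according to Lemma~\ref{lem:Trlone_compact}. In particular, it is closed in $\Trone$.''), so your argument is circular as written. More importantly, the closedness is not a formality to be delegated: it is the mathematically substantive part of the lemma. One must show that if $\rT_n\to\rT$ in GHP with $\mu_n\ge\ell_{\rT_n}$, then $\mu\ge\ell_\rT$, and this does not follow from Prokhorov convergence of the measures alone, since the length measures $\ell_{\rT_n}$ need not converge to $\ell_\rT$ in any useful sense. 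The paper's proof handles exactly this point: it samples $X_1,X_2$ from $\mu$, approximates $\mu(\llbracket X_1,X_2\rrbracket)$ by the empirical proportion $M^{(m)}(1,2)$ of further sample points lying on the segment, proves via Chebyshev that the convergence $M^{(m)}\to M$ is uniform over all trees in $\Trlone$, and then uses the upper semicontinuity of $\ind_{D(1,k)+D(k,2)=D(1,2)}$ in the distance matrix to pass the inequality $M(1,2)\ge D(1,2)$ to the limit. Your assessment that ``the main obstacle is precisely the covering estimate'' is therefore misplaced; that estimate is the easy half, and without an independent argument that the domination $\mu\ge\ell_\rT$ survives GHP limits, your proof is incomplete.
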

\begin{proof} 
By the first part of Lemma~\ref{lem:precompact}, the space $\Trlone$ is precompact in $\Trinfall$. It thus suffices to show that $\Trlone$ is closed. 

We first introduce some more notation. Fix a representative of a tree $\rT = (\rV,d,\root,\mu)\in\Trone$. Let $(X_i)_{i\ge0}$ be as above and define $D(i,j) = d(X_i,X_j)$, such that $D$ follows the law $\DM(\rT)$. Now set for $m\in\N^*$,
 \begin{equation}
  \label{eq:Mm}
  M^{(m)}(1,2) = \frac 1 m \sum_{k=3}^m \ind_{D(1,k) + D(k,2) = D(1,2)}.
 \end{equation}
By the law of large numbers, $M(1,2) = \lim_{m\to\infty} M^{(m)}(1,2)$ exists almost surely and equals $\mu(\llbracket X_1,X_2\rrbracket)$. Moreover, conditioned on $M(1,2)$, the random variables $Y_k = \ind_{D(i,k) + D(k,j) = D(i,j)}$, $k=3,4,\ldots$ are iid Bernoulli with parameter $M(1,2)$. By the conditional Chebychev inequality, we therefore have for every $\ep>0$,
\begin{equation}
 \label{eq:conv_uniform}
 \P(|M^{(m)}(1,2) - M(1,2)| > \ep|) \le  \frac{1}{m \ep^2} \E[\operatorname{Var}(Y_3\,|\,M(1,2))] \le \frac{1}{4m\ep^2}.
\end{equation}
In other words, the convergence of $M^{(m)}(1,2)$ to $M(1,2)$ is uniform in $\rT$.

Now, suppose  there exist $\rT_1,\rT_2,\ldots \in \Trlone$ such that $\rT_n$ converges to $\rT\in\Trinfall$ as $n\to\infty$. By Lemma~\ref{lem:super_agreable}, $\rT\in\Trone$ and  $\DM(\rT_n) \to \DM(\rT)$ as $n\to\infty$. Define $D$, $M(1,2)$ and $M^{(m)}(1,2)$ as above and define $D_n$, $M_n(1,2)$ and $M_n^{(m)}(1,2)$ analogously for every $n\in\N$. By Skorokhod's representation theorem we can and will assume that the distance matrices $D_n$ converge (pointwise) almost surely to $D$. We then have almost surely, for every $m\in\N$,
\[
 M^{(m)}(1,2) \ge \lim_{n\to\infty} M_n^{(m)}(1,2),
\]
since the RHS in \eqref{eq:Mm} is an upper semi-continuous function in $D$ for every $m$. The uniform convergence of $M^{(m)}(1,2)$ to $M(1,2)$ proven in \eqref{eq:conv_uniform} then shows that $M(1,2) \ge \lim_{n\to\infty} M_n(1,2).$ Moreover, since $\rT_n\in\Trlone$ for every $n$, we have $M_n(1,2) \ge D_n(1,2)$ almost surely for every $n$, so that almost surely $$M(1,2) \ge \lim_{n\to\infty} D_n(1,2) = D(1,2).$$
But since $M(1,2) = \mu(\llbracket X_1,X_2\rrbracket)$ and $D(1,2) = \ell_\rT(\llbracket X_1,X_2\rrbracket)$ with $X_1,X_2$ iid according to $\mu$, this implies that $\mu \ge \ell_{\rT}$ on its support. By Lemma~\ref{lem:precompact}, the measure $\mu$ has full support, whence $\mu\ge \ell_{\rT}$. This shows that $\rT\in\Trlone$ and hence $\Trlone$ is a closed subspace of the compact space $\Trone$.
\end{proof}

\begin{lemma}
\label{lem:varphi_continuous}
 The map $\varphi$ is continuous on $\Trlone$.
\end{lemma}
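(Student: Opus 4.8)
The plan is to prove continuity by checking convergence of finite-dimensional marginals. Since $\{0,1\}^{\N\times\N}$ is compact in the product topology, weak convergence in $\M{\{0,1\}^{\N\times\N}}$ is equivalent to convergence of all cylinder probabilities; and because the restriction of $\EPO[\rT]$ to $\{0,\dots,N\}$, which I denote $\EPO[\rT]^{(N)}$, takes values in a finite set, it suffices to show that for every $N$ and every relation $R$ on $\{0,\dots,N\}$ one has $\P(\EPO[\rT_n]^{(N)}=R)\to\P(\EPO[\rT]^{(N)}=R)$ whenever $\rT_n\to\rT$ in $\Trlone$.

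Next I would make the dependence on the tree explicit. Fixing representatives and the labelled sample $((X_i,S_i))_{i\ge0}$ introduced above, the restricted relation $\EPO[\rT]^{(N)}$ is determined by the ancestral relation $A=A(X_0,\dots,X_N)$ among the sampled points together with the labels $(S_i)_{i\le N}$, via $R=\{(i,j):(i,j)\in A,\ S_i=0\}$. The ancestral relation is itself a function of the distance matrix $D(i,j)=d(X_i,X_j)$ through the Gromov products, since $X_i\prec_\rT X_j$ holds exactly when $D(0,i)+D(i,j)=D(0,j)$ and $D(i,j)>0$. Writing $\theta:=d\ell_\rT/d\mu$ and $\mathrm{src}(\cdot)$ for the set of elements possessing an out-edge, and integrating out the conditionally independent labels, one obtains a formula of the shape
\[
\P(\EPO[\rT]^{(N)}=R)=\E\Big[\ind_{R\ \mathrm{compatible\ with}\ A}\prod_{i\in\mathrm{src}(R)}\theta(X_i)\prod_{i\in\mathrm{src}(A)\setminus\mathrm{src}(R)}\bigl(1-\theta(X_i)\bigr)\Big],
\]
where ``compatible'' means $R\subseteq A$ with each element contributing either all or none of its $A$-out-edges. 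This exhibits the two sources of difficulty: the compatibility indicator depends on the distance matrix only through the \emph{closed} collinearity conditions $D(0,i)+D(i,j)=D(0,j)$, and the label weights involve the Radon--Nikodym density $\theta$ comparing $\ell_\rT$ and $\mu$.

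The engine of the proof is that $\DM$ is continuous for the GHP topology, so $\rT_n\to\rT$ yields $\DM(\rT_n)\to\DM(\rT)$ weakly. To transfer this to the relation I would represent the label weights — equivalently, the excess measure $\mu-\ell_\rT$ — as limits of bounded continuous (``polynomial'') functionals of the distance matrix of an \emph{enlarged} sample $X_0,\dots,X_{N+m}$: by the law of large numbers the path masses $\mu(\llbracket X_i,X_j\rrbracket)$ are recovered as $m\to\infty$ from empirical averages $\tfrac1m\sum_k\ind_{D(i,k)+D(k,j)=D(i,j)}$, while the length part $\ell_\rT(\llbracket X_i,X_j\rrbracket)=D(i,j)$ is read off directly, so their discrepancy carries the information governing $\theta$. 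Crucially, the estimate \eqref{eq:conv_uniform} of Lemma~\ref{lem:Trlone_compact} makes this convergence \emph{uniform} in $\rT\in\Trlone$. For each fixed $m$ the corresponding functional is bounded and continuous, hence its expectation under $\rT_n$ converges to that under $\rT$ by continuity of $\DM$; the uniformity in $m$ then upgrades this to convergence of the $m\to\infty$ limit, i.e.\ of the cylinder probability.

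The main obstacle — and the reason for the ``intricate analysis'' alluded to in the introduction — is that the two key functionals are only \emph{semicontinuous}, not continuous, in the distance matrix: the collinearity indicators define closed events, so the ancestral relation can only \emph{gain} relations in the GHP limit, and the path mass of $\mu$ is merely upper semicontinuous, exactly as already witnessed in the proof of Lemma~\ref{lem:Trlone_compact}. Consequently the individual test functions furnish one-sided bounds rather than equalities, and recovering the label density $\theta$ is delicate precisely because excess mass can concentrate onto skeleton paths in the limit. I expect to reconcile the one-sided estimates into genuine two-sided convergence of every cylinder probability by exploiting the normalisation $\sum_R\P(\EPO[\rT]^{(N)}=R)=1$: once a bound of the form $\limsup_n\P(\EPO[\rT_n]^{(N)}=R)\le\P(\EPO[\rT]^{(N)}=R)$ is established uniformly over the finitely many relations $R$, summing over $R$ forces equality for each one. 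Arranging all the approximations to be one-sided in a single consistent direction is the technical heart of the argument.
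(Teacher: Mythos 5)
Your reduction to cylinder probabilities and your formula for $\P(\EPO[\rT]^{(N)}=R)$ in terms of the ancestral relation of the sample and the conditionally independent labels are correct, and you have correctly located the difficulty: the collinearity events $\{D(0,i)+D(i,j)=D(0,j)\}$ are only closed, so only one-sided estimates come for free. But the step on which your whole argument rests --- the bound $\limsup_n\P(\EPO[\rT_n]^{(N)}=R)\le\P(\EPO[\rT]^{(N)}=R)$ for \emph{every} relation $R$ --- is asserted as ``the technical heart'' without any mechanism, and I do not see how to get it by semicontinuity. The event $\{\EPO^{(N)}=R\}$ requires the relations in $R$ to hold (closed conditions on the distance matrix, pushing the $\limsup$ \emph{down}) \emph{and} the relations outside $R$ to fail (open conditions, pushing the $\liminf$ \emph{up}); the weights $\theta(X_i)$ and $1-\theta(X_i)$ likewise pull in opposite directions, since the path-mass functional is only upper semicontinuous. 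So there is no ``single consistent direction,'' and your normalisation trick $\sum_R\P(\cdot=R)=1$ --- which is valid only if the $\limsup\le$ bound holds for all $R$ simultaneously --- cannot be invoked. (Passing to the monotone events $\{\EPO\supseteq R\}$, which \emph{are} defined by closed conditions only, does not rescue this: Möbius inversion has alternating signs and there is no normalisation identity for these.) A secondary weak point is the proposal to recover the pointwise density $\theta=d\ell_\rT/d\mu$ from empirical averages of segment masses: the estimate \eqref{eq:conv_uniform} gives you $\mu(\llbracket X_i,X_j\rrbracket)$ uniformly, but differentiating segment masses to a pointwise density is not a bounded continuous functional of the distance matrix.

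The paper closes exactly this gap by a different device. Instead of cylinder probabilities $\{\EPO^{(N)}=R\}$, it works with the algebra of test functions $C\prod D(i,j)^{\beta_{ij}}\prod_l\ind_{a_l\EPO b_l}$ --- i.e.\ only \emph{positive} relations, so only events of the form $\{\EPO\supseteq R\}$ weighted by distance monomials --- which separates points and is therefore measure-determining by Stone--Weierstrass; no negations ever appear. Continuity of the expectation of such a test function is then proved by induction on the number $L$ of indicator factors: one picks an index $\alpha$ that is a source but not a target (or finds a cycle, forcing the function to vanish), and integrates out the sample point $X_\alpha$ explicitly. The key computation is that, conditionally on the other points, $\P(S_\alpha=0,\ X_\alpha\prec_\rT X')=d_\rT(\root,X')$ and $X_\alpha$ is then uniform on $\llbracket\root,X'\rrbracket$ --- this is precisely where $\mu\ge\ell_\rT$ enters --- so the conditional expectation becomes an explicit one-dimensional integral whose value is a \emph{polynomial} in the remaining distance-matrix entries. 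The discontinuous indicator is thereby eliminated rather than bounded, the number of indicators strictly decreases, and the base case $L=0$ is just continuity of the distance-matrix distribution. You would need to replace your one-sided-plus-normalisation scheme with some version of this exact integration to complete the proof.
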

\begin{proof}
We will show more in fact: we show that the map which to a tree $\rT\in\Trlone$ assigns the law of $(D_\rT,\EPO[\rT])$ is continuous, where $D_\rT$ is the distance matrix of the tree $\rT$. For this, we will consider test functions of the form
\begin{equation}
\label{eq:D_EPO_test_fn}
 f(D,\EPO) = C \prod_{i,j=0}^n D(i,j)^{\beta_{ij}} \prod_{l=1}^L \ind_{a_l \EPO b_l},
\end{equation}
where $C\in\R$, $n\in\N$, $\beta_{ij}\in\N$, $L\ge 0$ and $a_l,b_l\in\{1,\ldots,n\}$. Note that $D_\rT(i,j)\le 1$ almost surely for all $\rT\in\Trlone$ and $i,j\in\N$, whence we can formally see the couple $(D,\EPO)$ as a random variable taking values in the compact space $[0,1]^{\N\times\N}\times\{0,1\}^{\N^*\times\N^*}$. The vector space spanned by functions of the form \eqref{eq:D_EPO_test_fn} then forms an algebra of continuous functions on this space which separates points. By the Stone-Weierstrass theorem it is therefore enough to show that for a function $f$ as in \eqref{eq:D_EPO_test_fn}, the quantity $\E[f(D_\rT,\EPO[\rT])]$ is continuous in $\rT$. We will show this by induction on $L$. If $L=0$, the assertion follows from the fact that the law of $D_\rT$ is continuous in $\rT$ by Lemma~\ref{lem:GP_GHP} and Fact~\ref{fact:Gweak}. If $L>0$, define the sets $A = \{a_l:l=1,\ldots,L\}$ and $B = \{b_l:l=1,\ldots L\}$. We will distinguish two cases:

\underline{Case $A\subset B$}: In this case, there must exist a cycle\footnote{To see this, start with $l_1 = 1$ and let $l_2$ be such that $a_{l_1} = b_{l_2}$. Then let $l_3$ be such that $a_{l_2} = b_{l_3}$ and so on. Since the $l_i$'s take values in the finite set $\{1,\ldots,L\}$, a cycle has to appear at some point.} $l_1,\ldots,l_k,l_{k+1} = l_1$, such that $a_{l_i} = b_{l_{i+1}}$ for all $i=1,\ldots,k$. In particular, $f(D,\EPO)\ne 0$ implies that 
\[
b_{l_1} = b_{l_{k+1}} = a_{l_k} \EPO b_{l_k} = a_{l_{k-1}} \EPO \cdots \EPO b_{l_1},
\]
whence, by transitivity, $b_{l_1} \EPO b_{l_1}$. But by \eqref{eq:EPO}, we have $k \not\EPO[\rT] k$ for all $k\in\N$, whence $f(D_\rT,\EPO[\rT]) \equiv 0$. In particular, $\E[f(D_\rT,\EPO[\rT])]$ is continuous in $\rT$.

\underline{Case $A\not\subset B$}: In this case, let $\alpha\in A\backslash B$.
Define the sets 
\[
\Lambda = \{l\in\{1,\ldots,L\}: a_l = \alpha\},\quad \overline{\Lambda} = \{1,\ldots,L\}\backslash \Lambda.
\]
Note that $\Lambda \ne \emptyset$, such that $\#\overline{\Lambda} < L$. We will show that we can express $\E[f(D_\rT,\EPO[\rT])]$ as a sum of expressions of the same type, with functions of the form \eqref{eq:D_EPO_test_fn} containing only the indicator functions corresponding to $l\in\overline{\Lambda}$. This will allow us to complete the induction step.

Recall the construction of $D_\rT$ and $\EPO[\rT]$ from the sequence $(X_0,S_0),(X_1,S_1),\ldots$ defined above. Since $\Lambda\ne\emptyset$, we have by definition,
\begin{equation}
\label{eq:ste_genevieve}
 \prod_{l\in\Lambda} \ind_{a_l\EPO[\rT] b_l} = \ind_{S_\alpha = 0}\prod_{l\in\Lambda}\ind_{X_\alpha \prec_\rT X_{b_l}}.
\end{equation}
Now define $B_\Lambda = \{b_l:l\in\Lambda\}$. We can assume that $\alpha \not\in B_\Lambda$, otherwise the function $\ind_{\alpha\EPO\alpha}$ would be a factor of $f(D,\EPO)$ and therefore $f(D_\rT,\EPO[\rT]) \equiv 0$. Let $X'$ be the most recent common ancestor of the vertices $X_b$, $b\in B_\Lambda$, which is $\sigma(X_b;\,b\in B_\Lambda)$-measurable. By the definition of the sequence $(X_i,S_i)_{i\ge0}$, conditioned on $(X_k,S_k)_{k\ne \alpha}$, the event $\{S_\alpha = 0, X_\alpha \prec_\rT X'\}$ has probability $d_\rT(\root,X')$ and conditioned on this event, $X_\alpha$ is uniformly distributed on $\llbracket \root,X'\rrbracket$. Furthermore, if $X'\wedge X_i$ denotes the most recent common ancestor of $X'$ and $X_i$, then if $X_\alpha\in\llbracket \root,X'\rrbracket$, we have for every $i\ne \alpha$, 
\begin{align*}
D_\rT(\alpha,i) = d_\rT(X_\alpha,X_i) 
&= d_\rT(X_\alpha,X'\wedge X_i) + d_\rT(X'\wedge X_i,X_i)\\
&= |d_\rT(\root,X_\alpha) - d_\rT(\root,X'\wedge X_i)| + d_\rT(\root,X_i) - d_\rT(\root,X'\wedge X_i).
\end{align*}

We now use this to calculate the expectation of the factors of $f(D_\rT,\EPO_\rT)$ involving $\alpha$, conditioned on the remaining ones. Since $D_\rT$ is symmetric, we can assume that $\beta_{i\alpha} = 0$ for all $i\ne \alpha$, and furthermore, $\beta_{\alpha\alpha} = 0$, since $D_\rT(\alpha,\alpha) = 0$ almost surely. From the above, we then have
\begin{multline}
\label{eq:prod_U}
 \E\Big[\prod_{i\in\{0,\ldots,n\}\backslash\alpha} D(\alpha,i)^{\beta_{\alpha i}} \ind_{(S_\alpha = 0,\,X_\alpha \EPO[\rT] X')}\,\Big|\, (X_k,S_k)_{k\ne \alpha}\Big] \\
 = \E_U\Big[\prod_{i\in\{0,\ldots,n\}\backslash\alpha} \big(|U-x_i|+y_i\big)^{\beta_{\alpha i}}\ind_{(U < z)}\Big],
\end{multline}
where $U\sim \operatorname{Unif}(0,1)$, $\E_U$ denotes expectation w.r.t. $U$ and $x_i,y_i,z$ are
\[
x_i = d_\rT(\root,X'\wedge X_i),\quad y_i = d_\rT(\root,X_i) - d_\rT(\root,X'\wedge X_i),\quad z = d_\rT(\root,X').
\]
Each factor in the product on the RHS of \eqref{eq:prod_U} can be written as
\[
 \big(|U-x_i|+y_i\big)^{\beta_{\alpha i}}\ind_{(U < z)} = \big(U-x_i - 2(U-x_i)\ind_{(U<x_i)} - y_i\big)^{\beta_{\alpha i}}\ind_{(U < z)}.
\]
Expanding the powers and the product then gives that
\[
 \prod_{i\in\{0,\ldots,n\}\backslash\alpha} \big(|U-x_i|+y_i\big)^{\beta_{\alpha i}}\ind_{(U < z)} = \sum_{\xi\in\{x_i,i\ne\alpha\}\cup \{z\}} P_\xi(U,(x_i)_{i\ne \alpha},(y_i)_{i\ne\alpha})\ind_{(U<\xi)},
\]
where each $P_\xi$ is a polynomial in its arguments. Taking expectations and using the fact that $\E_U[U^{k-1}\ind_{(U<\xi)}] = \xi^k/k$ for every $k\ge1$ and $\xi\in[0,1]$, we obtain that the RHS of \eqref{eq:prod_U} is a polynomial in $(x_i)_{i\ne \alpha}$, $(y_i)_{i\ne\alpha}$ and $z$. 

Now note that for every $i\ne \alpha$, $d_\rT(\root,X')$ and $d_\rT(\root,X'\wedge X_i)$ are linear combinations\footnote{For two vertices $x,y$, if $z$ denotes their most recent common ancestor, $d_\rT(\root,z) = \frac 1 2(d_\rT(\root,x) + d_\rT(\root,y) - d_\rT(x,y))$. The statement follows easily from this by induction.} of $d_\rT(X_i,X_j) = D_\rT(i,j)$, $i,j\in\{0,\ldots,n\}\backslash\{\alpha\}$. Together with the above, this implies that the RHS in \eqref{eq:prod_U} is in fact a polynomial in $D_\rT(i,j)$, $i,j\in\{0,\ldots,n\}\backslash\{\alpha\}$.
Plugging this into \eqref{eq:D_EPO_test_fn} and using \eqref{eq:ste_genevieve} yields that
\(
 \E[f(D_\rT,\EPO[\rT])]
\)
can be written as the sum of expressions of the same type, with functions of the form \eqref{eq:D_EPO_test_fn} with $L$ replaced by $\#\overline{\Lambda} < L$. This finishes the induction step and therefore the proof of the lemma.
\end{proof}

\begin{proof}[Proof of Lemma~\ref{lem:homeomorphism}]
Since $\Trlone$ is compact by Lemma~\ref{lem:Trlone_compact} and $\varphi(\Trlone)$ is Hausdorff, it is enough to show that $\varphi$ is bijective and continuous \cite[Theorem XI.2.1]{Dugundji}. Continuity follows from Lemma~\ref{lem:varphi_continuous}, and bijectivity follows from Lemma~\ref{lem:DMtilde} and the fact that $\DM$ is injective by Fact~\ref{fact:Gromov_weak_uniqueness}.
\end{proof}

\begin{proof}[Proof of Proposition~\ref{prop:topologies_2}]
Follows immediately from Lemmas \ref{lem:homeomorphism-0} and \ref{lem:homeomorphism}.
\end{proof}

\section{The map \texorpdfstring{$\Dop$}{D}: infinite trees (proof of Theorem~\ref{th:topologies})}
\label{sec:3}

In this section, we prove Theorem~\ref{th:topologies}. The proof uses a series of lemmas. The first one concerns the continuity of the map $\Dop$:
\begin{lemma}
 \label{lem:D_continuous}
 The map $\Dop:\Trlinf\to\M{\Tdinf}$ is continuous.
\end{lemma}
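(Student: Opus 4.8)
The plan is to prove sequential continuity: given $\rT_n\to\rT$ in the local GHP topology on $\Trlinf$, we show that $\Dop(\rT_n)\to\Dop(\rT)$ weakly in $\M{\Tdinf}$. Since the local topology on $\Tdinf$ is generated by the graph-distance restriction maps $T\mapsto T^{\le k}$, each of which takes values in the discrete (hence Polish) space $\Tdfin$, the cylinder events $\{T^{\le k}=t\}$, for $k\in\N$ and $t$ a finite rooted tree of depth at most $k$, form a convergence-determining class. Thus it suffices to show that $\P(\Dop(\rT_n)^{\le k}=t)\to\P(\Dop(\rT)^{\le k}=t)$ for each such $k$ and $t$, together with a mild no-escape-of-mass check guaranteeing the weak limit is genuinely supported on $\Tdinf$; the latter will follow from the same estimates that control the degrees near the root.

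The central device is truncation at a metric radius $r$. Writing $\rT|_{B_r}$ for the restriction of $\rT$ to the closed ball of radius $r$ about the root, which is a finite measured $\R$-tree still satisfying $\mu\ge\ell_\rT$, I would separate two independent issues. The first is reconciling the graph-distance restriction $(\cdot)^{\le k}$ with the metric truncation. A vertex of $\Dop(\rT)$ at graph distance $j$ is the $j$-th point of the Poisson process $V_0$ met along a geodesic from the root, and all of its ancestors are metrically closer to the root; hence, once every vertex of graph distance at most $k$ lies inside $B_r$, one has the exact identity $\Dop(\rT)^{\le k}=\Dop(\rT|_{B_r})^{\le k}$, because points removed by the truncation lie strictly farther from the root than any such vertex and therefore cannot be among its ancestors. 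The probability of the complementary \emph{escape} event is controlled by the observation that a point at metric distance $s$ carries a $\Poi(s)$-distributed number of $V_0$-ancestors (its geodesic to the root has $\ell_\rT$-length $s$), so a far-away vertex is exponentially unlikely to have only few $V_0$-ancestors. Here the structure of trees with finitely many ends enters crucially: a complete locally compact $\R$-tree is unbounded only if it contains an end, so beyond a large radius the tree reduces to the finitely many spine rays together with bounded off-spine subtrees, which makes the escape event tractable.

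The second issue is the convergence of the truncated discretizations. At any \emph{continuity radius} $r$, that is, one carrying no $\mu$-mass exactly on the sphere of radius $r$ (a co-countable set of radii), local GHP convergence forces GHP convergence of the finite measured balls $\rT_n|_{B_r}\to\rT|_{B_r}$. Embedding these balls into a common space through near-isometric correspondences, the marked intensity measures $\ell_{\rT}\otimes\delta_0+(\mu-\ell_\rT)\otimes\delta_1$ converge weakly, whence the associated $V_0,V_1$ Poisson processes converge jointly in distribution; the map sending a finite measured tree together with its marked point configuration to the depth-$\le k$ discrete tree is almost surely continuous at the limiting configuration. The continuous mapping theorem then yields $\Dop(\rT_n|_{B_r})^{\le k}\to\Dop(\rT|_{B_r})^{\le k}$ in law. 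This finite-measure base case is exactly of the GHP--Poissonization type underlying the continuity statement of Proposition~\ref{prop:topologies_2}, and runs parallel to the argument sketched for $\Srop^R$ along the lines of \cite[proof of Lemma 2.6(ii)]{EPW06}.

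Combining the two steps, for a fixed large continuity radius $r$ the laws of $\Dop(\rT_n)^{\le k}$ and $\Dop(\rT_n|_{B_r})^{\le k}$ differ by at most the escape probability, the latter law converges as $n\to\infty$ to that of $\Dop(\rT|_{B_r})^{\le k}$, and this in turn approximates the law of $\Dop(\rT)^{\le k}$ as $r\to\infty$; sending first $n\to\infty$ and then $r\to\infty$ concludes. I expect the main obstacle to be the \emph{uniform} (in $n$) control of the escape event: one must rule out distant, low-generation vertices appearing in the approximating trees $\rT_n$, and local GHP convergence controls only balls of fixed radius, not the measure or length tails beyond them. The resolution is to combine the exponential $\Poi(s)$ tail with the finitely-many-ends geometry transported along the convergence, yielding uniform local finiteness of the degrees of $\Dop(\rT_n)$ near the root; verifying this, together with checking that the chosen radii are continuity radii and that the reconciliation identity is applied on the correct event, constitutes the bulk of the technical work.
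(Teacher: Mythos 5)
Your proposal follows the same architecture as the paper's proof: establish continuity on finite trees first, truncate at a metric radius $R$, show convergence of the discretized truncations, and control the ``escape'' event that some vertex of $\Dop(\rT_n)$ at graph distance at most $m$ lies outside the ball of radius $R$. Your handling of the truncation discontinuity (continuity radii versus the paper's choice of radii $R_n\ge R$ with $R_n\to R$ from Lemma~\ref{eq:inmeasure}) and of the finite-tree base case (direct Poisson-process convergence versus the paper's reduction to Proposition~\ref{prop:topologies_2} by rescaling to $\Trlone$) are minor, workable variants.

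The one place where your argument is not actually carried out is precisely the step you flag as ``the bulk of the technical work'': the uniform-in-$n$ escape estimate. As written, your mechanism --- ``a far-away vertex is exponentially unlikely to have only few $V_0$-ancestors'' --- suggests a union bound over the sampled far-away vertices, of which there are unboundedly many (indeed infinitely many beyond radius $R$ when $\mu$ has infinite mass), so that bound does not close on its own. The paper's resolution is to union instead over the finitely many \emph{geodesic directions}: the quantity $N_{r,R}(\rT_n)$ counting vertices at distance $r$ with a descendant at distance $R$ is, by the precompactness criterion of Lemma~\ref{lem:precompact_2} applied to the convergent (hence precompact) sequence $(\rT_n)$, bounded by some $N$ uniformly for $n\ge n_0$; every sampled vertex beyond radius $R$ with few $V_0$-ancestors forces one of these $N$ path-restrictions of length $r$ to contain fewer than $m$ points of $V_0$, an event of probability at most $N\,\P(\Poi(r)<m)$, which is made small by choosing $r$ large. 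Your appeal to ``finitely many spine rays plus bounded off-spine subtrees'' and to ``uniform local finiteness of the degrees'' points in the right direction but is not the right invariant --- degrees near the root are not what controls escape; the number of distinct directions to distance $R$ is --- and it is not explained how this finiteness is transported uniformly along the sequence $\rT_n$. Supplying the $N_{r,R}$ bound via the precompactness characterization is exactly what turns your sketch into a proof.
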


In view of Lemma~\ref{lem:D_continuous} and Section~\ref{sec:extension}, we can extend $\Dop$ to a continuous map $\Dop:\M{\Trlinf}\to\M{\Tdinf}$. The next lemma shows that this extension is injective.

\begin{lemma}
 \label{lem:D_injective}
 The map $\Dop:\Trlinf \to \M{\Tdinf}$ and its extension $\Dop:\M{\Trlinf}\to\M{\Tdinf}$ are injective.
\end{lemma}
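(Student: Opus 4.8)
\emph{Strategy.} The plan is to reduce both injectivity statements to the uniqueness of the de Finetti (ergodic) decomposition of the exchangeable relation $\EPO[\rT]$ studied in Section~\ref{sec:2}, working up through the chain $\Trlone\subset\Trlfin\subset\Trlinf$. The injectivity of the base map on a single tree comes out as the special case of point masses, while the injectivity of the extension to $\M{\Trlinf}$ is the genuinely measure-valued statement; I would treat them together. The serious work is the last step, the passage from compact to general (boundedly finite) trees, where a metric truncation has to be read off from a purely combinatorial object.

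\emph{The compact case.} On $\Trlone$ everything is already in place. As recorded before Lemma~\ref{lem:homeomorphism}, the law of $\Dop(\rT)$ determines the laws of all finite restrictions $\EPO[\rT]^n$ and hence, by Kolmogorov consistency, the law of the full exchangeable relation $\EPO[\rT]$, i.e.\ $\varphi(\rT)$; since $\varphi$ is injective by Lemma~\ref{lem:homeomorphism} (equivalently, since $\DM=\DMT\circ\varphi$ by Lemma~\ref{lem:DMtilde} and $\DM$ is injective by Fact~\ref{fact:Gromov_weak_uniqueness}), this recovers $\rT$. For the extension, let $\nu_1,\nu_2\in\M{\Trlone}$ with $\int\Dop(\rT)\,\nu_1(d\rT)=\int\Dop(\rT)\,\nu_2(d\rT)$. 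The same Kolmogorov argument recovers from this common law the law of the \emph{mixed} exchangeable relation, namely the barycentre $\int\varphi(\rT)\,\nu_i(d\rT)$. For a deterministic $\rT$ the pairs $(X_i,S_i)$ are i.i.d.\ by construction, so $\varphi(\rT)$ is ergodic; hence $\int\varphi(\rT)\,\nu_i(d\rT)$ represents one and the same exchangeable relation as a mixture of ergodic ones. By uniqueness of the ergodic decomposition the mixing measures $\varphi_*\nu_1$ and $\varphi_*\nu_2$ coincide, and injectivity of $\varphi$ forces $\nu_1=\nu_2$.

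\emph{Finite mass.} To pass from $\Trlone$ to $\Trlfin$ I would first recover the total mass: the number of non-root vertices of $\Dop(\rT)$ is $\Poi(\mu(\rV))$-distributed, so $\mu(\rV)$ is a measurable functional of the law of $\Dop(\rT)$ (for a mixture, identifiability of Poisson mixtures recovers the law of the mass). The continuous bijection $\rT\mapsto(\Srop[\mu(\rV)^{-1}](\rT),\mu(\rV))$ between $\Trlfin$ and $\Trlone\times\R^+$ used in the proof of Lemma~\ref{lem:D_continuous} then reduces the $\Trlfin$ case, for both the base map and its extension, to the compact case just treated.

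\emph{General trees and the main obstacle.} For $\rT\in\Trlinf$ the measure $\mu$ is only boundedly finite, so the exchangeable relation on all of $\N$ is no longer available and I would argue by truncation: each truncated tree $\rT^{\le R}$ lies in $\Trlfin$, and $\rT^{\le R}\to\rT$ in the (local) GHP topology as $R\to\infty$, so it suffices to recover the law of $\Dop(\rT^{\le R})$ from the law of $\Dop(\rT)$ for a sequence $R\to\infty$, after which the finite-mass case identifies $\rT^{\le R}$ and the limit identifies $\rT$ (and, for mixtures, identifies the pushforwards of $\nu_1,\nu_2$ under truncation, hence $\nu_1=\nu_2$ in the limit). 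The main obstacle is precisely this recovery: the metric ball of radius $R$ is not visible in the combinatorial tree $\Dop(\rT)$, because the graph distance from the root to a $V_0$-vertex sitting at metric distance $t$ is only a $\Poi(t)$ approximation of $t$, so graph balls do not coincide with metric balls. I expect this to be resolved by the same law of large numbers that underlies Lemmas~\ref{lem:DMtilde} and~\ref{lem:Trlone_compact}: along any path the $V_0$-points accumulate at unit rate with respect to $\ell_\rT$, so metric distances, and thus metric balls, can be identified from the law through normalized counts of ancestral $V_0$-points, while the truncation estimate already established in the proof of Lemma~\ref{lem:D_continuous} controls which vertices of $\Dop(\rT)$ originate from $\rT^{\le R}$. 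Making this identification precise, so that the law of $\Dop(\rT^{\le R})$ is genuinely a measurable function of the law of $\Dop(\rT)$, is the delicate technical point; the remaining steps are then routine given the compact case.
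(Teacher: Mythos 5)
Your treatment of the compact and finite-mass cases matches the paper's in substance: on $\M{\Trlone}$ the paper likewise reduces everything to the injectivity of $\DM$ (Fact~\ref{fact:Gromov_weak_uniqueness} is already stated for random trees, so the ergodic-decomposition detour is a valid but unnecessary alternative), and the passage to $\M{\Trlfin}$ is the same rescaling argument. The problem is the last step, from $\M{\Trlfin}$ to $\M{\Trlinf}$, which you correctly identify as the crux but do not carry out --- and the fix you sketch would not work. Recovering the law of $\Dop(\rT^{\le R})$ from the law of $\Dop(\rT)$ cannot be done by the law of large numbers you invoke: in Lemmas~\ref{lem:DMtilde} and~\ref{lem:Trlone_compact} the averaging is over $n\to\infty$ points sampled from the \emph{probability} measure $\mu$, whereas here the relevant intensity is the unnormalized $\ell_\rT$, so a vertex of $\Dop(\rT)$ sitting at metric distance $t$ from the root has a single $\Poi(t)$-distributed number of $V_0$-ancestors --- one noisy observation of $t$, with nothing to average over. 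Consequently the metric ball of radius $R$ is not visible in the combinatorial tree, and the estimate \eqref{eq:ched} only gives a one-sided inclusion with high probability, not an identification of which vertices of $\Dop(\rT)$ originate from $\rT^{\le R}$.

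The paper circumvents this by replacing the deterministic metric truncation with a \emph{randomized} pruning that has an exact combinatorial counterpart: for $\lambda>0$, cut $\rT$ at the atoms of a Poisson process of intensity $\lambda\,\ell_\rT|_{\Sp(\rT)}$ and keep the root component, and on the discrete side delete each spine vertex of $T$ independently with probability $\lambda/(1+\lambda)$ and keep the root component. By superposition and thinning of Poisson processes these satisfy $\Dop(\rT^{\lambda})=\Dop(\rT)^{\lambda}$ exactly in law, the discrete pruning is manifestly a function of the law of $\Dop(\rT)$ alone, and $\rT^{\lambda}\in\M{\Trlfin}$ almost surely; one then applies the finite-mass case and lets $\lambda\to0$. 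Some device of this kind (or an equivalent one) is needed to complete your argument; the metric-ball truncation as proposed cannot be pushed through.
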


The next lemma will be used to prove that the image $\Dop$ is closed in $\M{\Tdinf}$:
\begin{lemma}
 \label{lem:tightness}
If $\rT_1,\rT_2,\ldots\in\Trlinf$ (resp., $\M{\Trlinf}$), such that $\Dop(\rT_n)$ converges in law to a random tree $T$ supported on $\Tdinf$, then there exists $\rT\in\Trlinf$ (resp., $\M{\Trlinf}$), such that $\Dop(\rT) = T$ and $\rT_n\to\rT$.
\end{lemma}

The proof of Theorem~\ref{th:topologies} now directly follows from the previous results:
\begin{proof}[Proof of Theorem~\ref{th:topologies}]
Continuity, injectivity and closedness of the map  $\Dop$ and its extension are exactly Lemmas~\ref{lem:D_continuous}, \ref{lem:D_injective} and \ref{lem:tightness}, respectively.
\end{proof}

One ingredient for Lemma~\ref{lem:tightness} is the following lemma which is of independent interest:
\begin{lemma}
\label{lem:Trlinf_closed}
$\Trlinf$, $\Trlfin$ and $\Trlone$ are closed subspaces of $\Trinf$, $\Trfin$ and $\Trone$, respectively.
\end{lemma}

The proofs of the four lemmas follow.

\begin{proof}[Proof of Lemma~\ref{lem:D_continuous}]
The continuity on the space $\Trlone$ follows from Proposition~\ref{prop:topologies_2}. 
There is the map: 
$$ \Trlfin \to \Trlone \times \R^+, \quad \rT \mapsto \left(\Srop[\mu(\rV)^{-1}](\rT),\mu(\rT) \right)$$
that is continuous, therefore the continuity  extends to the space $\Trlfin$.
We now turn to $\Trlinf$. We consider $\rT_n \in \Trlinf$ with limit $\rT$, and
we need to prove that 
\begin{equation}
\label{eq:stilton}
\Dop(\rT_n)^{\le m}\to \Dop(\rT)^{\le m}, \text{for every } m \in \N.
\end{equation}
The convergence of $\rT_n$ implies by \ref{eq:inmeasure} for 
each $R\ge0$ the existence of a sequence $R_n \ge R$ converging to $R$ and such that $\rT_n^{\le R_n} \to \rT^{\le R}$ . The convergence
\begin{equation}
\label{eq:stichelton}\Dop(\rT_n^{\le R_n}) \to \Dop(\rT^{\le R})
 \end{equation}
follows since the truncated trees $\rT_n^{\le R_n}, \rT^{\le R}$ belong to $\Trlfin$, on which the map $\Dop$ is continuous.
Fix $m \in \N$ and $\ep>0$. Throughout the rest of the proof, we write $\rT_\infty = \rT$ and let $n$ take values in $\N\cup\{\infty\}$. The convergence in \eqref{eq:stilton} now follows from \eqref{eq:stichelton}, $R_n \ge R$ and the following claim: there exist $R,n_0<\infty$, such that 
\begin{equation}
\label{eq:ched}
\P(\Dop(\rT_n)^{\le m} \subset \Dop(\rT_n^{\le R})) > 1-\ep\quad\text{for $n\ge n_0$.}
\end{equation}
To establish the claim, consider the paths in $\rT_n$ of length $R$ starting from the root.
The number of these paths that have a distinct restriction to a distance $r$ from the root, $r \leq R$, is $N_{r,R}(\rT_n)$ defined before Lemma~\ref{lem:precompact_2}. 
By that lemma, for every $r\ge 0$, there exist $R,n_0 < \infty$ such that $N_{r,R}(\rT_n)\le N$ for $n\ge n_0$, where $N$ only depends on the family $(\rT_n)_{n\in\N\cup\{\infty\}}$ and not on $m,r,R,n_0$. 
Now, the number of points in $V_0$ in a path of length $r$ (in the discretization of $\rT_n$) 
 is a $\operatorname{Poi}(r)$-distributed random variable $X$, and we  can take $r$ large enough, so that $\P(X < m) < \ep/N$. A union bound then shows that, with probability at least $1-\ep$, the restriction to distance $r$ of any path in $\rT_n$ of length $R$ starting from the root has at least $m$ points in $V_0$, for $n \ge n_0$. This proves
\eqref{eq:ched}, and \eqref{eq:stilton} follows.
\end{proof}

\begin{proof}[Proof of Lemma~\ref{lem:D_injective}]
Since $\Dop(\rT) = \Dop(\delta_\rT)$ for every $\rT\in\Trlinf$, it suffices to show the injectivity of the extension only.
The map $\DM$ is injective on the space $\M{\Trlone}$ by Fact~\ref{fact:Gromov_weak_uniqueness}. By Lemma~\ref{lem:commutation}, the extension of the map $\varphi$ to $\M{\Trlone}$ then is injective as well. Furthermore, the bijective correspondence between $\varphi(\rT)$ and $\Dop(\rT)$ for $\rT\in\Trlone$ described before Lemma~\ref{lem:homeomorphism} readily extends to random trees. This immediately shows injectivity of $\Dop$ on $\M{\Trlone}$.

The injectivity of $\Dop$ on $\M{\Trlfin}$ follows using the rescaling argument in the proof of Lemma~\ref{lem:D_continuous}.

To prove the injectivity of $\Dop$ on $\M{\Trlinf}$, we introduce two pruning operations, on $\R$-trees and on discrete trees, that commute with $\Dop$.
Let $\rT$ be a random tree in $\Trlinf$. For $\lambda > 0$, let $\rT^{\lambda}$ be the random tree obtained by cutting the tree $\rT$ at the atoms of a Poisson process with intensity $\lambda \ell_\rT|_{\Sp(\rT)}$, and keeping the component containing the root.
Also, let $T$ be a random tree  in $\Tdinf$. For $\lambda >0$, let $T^{\lambda}$ be the random tree obtained from $T$ by first marking every vertex on the spine independently with probability $\lambda/(1+\lambda)$, then removing all the marked vertices (together with the adjacent edges) and keeping the component containing the root.
There is the equality in distribution\footnote{This relies on the fact that in the discretization operation the spine vertices are necessarily in $V_0$ and thus have been sampled according to the length measure $\ell_\rT$.}$$\Dop(\rT^{\lambda})=\Dop(\rT)^{\lambda}.$$

We are now ready to complete the proof. Consider $\rT_1$ and $\rT_2$ two random trees in $\Trlinf$ that satisfy $\Dop(\rT_1) = \Dop(\rT_2)$. We then have:
\begin{equation}
\label{eq:cheltenham}
 \Dop(\rT^{\lambda}_1) = \Dop(\rT_1)^{\lambda} = \Dop(\rT_2)^{\lambda} = \Dop(\rT_2^{\lambda}).
\end{equation}
Also, $\rT_1^{\lambda}$ and $\rT_2^{\lambda}$ are elements of $\M{\Trlfin}$. This may be justified as follows (for $\rT_1$ say): the tree $\Dop(\rT_1)$ has an  almost surely finite number of infinite paths starting from the root and every one of them is truncated in $\Dop(\rT_1)^{\lambda}$ at a finite distance from the root, therefore the tree $\Dop(\rT_1)^\lambda$ has a finite diameter and, being locally finite, it has finitely many vertices. Since the number of non-root vertices in $\Dop(\rT_1)^\lambda$ is a Poi($\mu(\rT^\lambda_1)$)-distributed random variable, this entails that $\mu(\rT_1^\lambda) < \infty$ almost surely, and since $\mu\ge \ell_{\rT_1}$, that $\rT_1^\lambda\in \M{\Trlfin}$ almost surely. From \eqref{eq:cheltenham} and the injectivity of $\Dop$ on $\M{\Trlfin}$, we deduce that $\rT_1^{\lambda} = \rT_2^{\lambda}$, and the identity $\rT_1 = \rT_2$ follows taking the $\lambda \to 0$ limit.
\end{proof}

\begin{proof}[Proof of Lemma~\ref{lem:Trlinf_closed}]
The space $\Trlone$ is compact according to Lemma~\ref{lem:Trlone_compact}. 
In particular, it is closed in $\Trone$. 
It remains to prove that $\Trlinf$, $\Trlfin$ are closed subspaces of $\Trinf$, $\Trfin$.
For $\Trlfin$, this follows by rescaling 
as in the proof of Lemma \ref{lem:D_continuous}.
For $\Trlinf$, we use truncation:
If $\rT_n \in\Trlinf$ converge to  $\rT \in \Trinf$, then, by \ref{eq:inmeasure}, for 
every $R>0$, there exists a sequence $R_n \ge R$ converging to $R$  
such that $\rT_n^{\le R_n} \to \rT^{\le R}$, with $\rT_n^{\le R_n} \in \Trlfin$ and $\rT^{\le R} \in \Trfin$.
Since $\Trlfin$ is closed in $\Trfin$, we have that $\rT^{\le R} \in \Trlfin$ for every $R$, whence $\mu$ dominates $\ell_{\rT}$ on every ball of radius $R$ around the root in $\rT$. Hence, $\mu\ge \ell_{\rT}$ everywhere and thus $\rT\in\Trlinf$.
\end{proof}

\begin{proof}[Proof of Lemma~\ref{lem:tightness}]
We only prove the Lemma for $\M{\Trlinf}$, the proof for $\Trlinf$ is similar.
Assume that $(\rT_n = (\rV_n,d_n,\root_n,\mu_n))_{n\ge0}$ is a sequence of random trees taking values in $\M{\Trlinf}$, such that $\Dop(\rT_n)$ converges as $n\to\infty$ to a random tree in $\Tdinf$. It is enough to show that 
the sequence $(\rT_n)_{n\ge0}$ is precompact in $\M{\Trinf}$.
Suppose indeed this holds. Let $\rT^*\in\M{\Trinf}$ be a limit point of the sequence $(\rT_n)_{n\ge 0}$.
By Lemma~\ref{lem:Trlinf_closed}, we actually have that $\rT^*\in\M{\Trlinf}$. By Lemma~\ref{lem:D_continuous}, $\Dop(\rT^*) = \lim \Dop(\rT_n)$. Lemma~\ref{lem:D_injective} now gives that $\rT^*$ is unique, whence $\rT_n \to \rT^*$ in law as $n \to \infty$, which was to be proven.

We now show precompactness of $(\rT_n)_{n\ge0}$ in $\M{\Trinf}$.
From the characterization of precompactness in Lemmata \ref{lem:precompact} and \ref{lem:precompact_2}, there are two points to show:
\begin{enumerate}
 \item For every $r\ge 0$, the family of random variables $\mu_n(\rT_n^{\le r})$ is tight.
 \item For every $r\ge 0$, there exist $R=R(r)$ and $n_0 = n_0(r)$, such that the family of random variables $(N_{r,R(r)}(\rT_n))_{r \geq 0,n\ge n_0(r)}$ is tight. 
\end{enumerate}
We prove both of these points by contradiction.

\emph{First point.} Assume that
there exists $t \geq 0$ such that the family of random variables $M_n := \mu_n(\rT_n^{\le r})$ is not tight. Restricting to a subsequence and conditioning on suitable events, we may assume that $\lim_{n\to\infty} M_n=+\infty$, in law. 
Under this assumption, we aim to show that there exists an integer $t'$ such that
\begin{equation}
\label{eq:Dt-not-tight}
\text{ the sequence } (\# \Dop(\rT_n)^{\le t'})_{n \in \N} \text{ is not tight.} 
\end{equation}
By Lemma~\ref{lem:precompact}, this will lead to the required contradiction.

Denote the set of vertices of the tree $\Dop(\rT_n^{\le t})$ by $U_n$ and set $N_n = \#U_n$.
We label the vertices in $U_n$ at random by elements of $\{1,\ldots ,N_n\}$, and define, for $k \in \N,$ the random variable $I_k$ by: $I_k = 1$ if $k \leq N_n$ and the vertex $k$ has more than $4t$ non-root ancestors (that is, ancestors distinct from the root) below, and $I_k= 0$ otherwise. The sum $N_n^I=\sum_{k} I_k$ then gives the number of those vertices that have more than $4t$ non-root ancestors. 
The number of non-root ancestors of a vertex in $U_n$ is, conditionally given $N_n$, dominated by a Bin($N_n-1,t/M_n$)-distributed random variable, since the length measure of the path from this vertex to $\rho$ in $\rT_n^{\le t}$ cannot exceed $t$ by definition.
We first compute the conditional expectation of $N_n^I$ given $N_n$:
\begin{align}
\label{eq:N_i-N_n}
\E(N_n^I|N_n) =  N_n \; \E(I_1|N_n) \le N_n \; \P\Big(\text{Bin}\Big(N_n-1, \frac{t}{M_n}\Big) \geq 4 \, t\,\Big|\,N_n\Big) \leq 
N_n \frac{(N_n-1)}{4 \, M_n}
\end{align}
using the conditional Markov inequality for the last estimate. Another application of the same inequality now gives:
\begin{align*}
\P(N_n^I > N_n/2)= \E(\P(N_n^I > N_n/2|N_n)) \le 2 \, \E\Big( \frac{\E(N_n^I|N_n)}{N_n} \Big) \le 1/2,
\end{align*}
using \eqref{eq:N_i-N_n}
and the fact that $N_n$ is a Poi($M_n$)-distributed random variable 
for the last inequality. Notice this inequality is equivalent to:
\begin{align*}
\P(N_n- N_n^I \ge N_n/2) = \P(N_n^I \le N_n/2) \ge 1/2.
\end{align*}
Now, fix an arbitrary $k \in \N$. Our assumption that $M_n$ diverges in law implies that the sequence $\P(N_n/2 \ge k)$ has limit 1, and in particular is larger than $3/4$ for $n$ large enough. This gives, for these values of $n$,
\begin{align*}
\P(N_n- N_n^I \ge k) 
&\ge \P(N_n- N_n^I \ge N_n/2, \, N_n/2 \ge k) \\
&\ge \P(N_n- N_n^I \ge N_n/2) + \P(N_n/2 \ge k) -1 \\ 
& \ge 1/4,
\end{align*}
and, together with the inequality $\# \Dop(\rT_n)^{\le t'} \geq N_n -N_n^{I}$ valid for $t'=4t+1$, this proves 
\eqref{eq:Dt-not-tight}.

\emph{Second point.} Assume that for all choices of  $R=R(r)$ and $n_0 = n_0(r)$ the family of random variables $(N_{r,R}(\rT_n))_{r\ge 0, n\ge n_0}$ is not tight. This means that there exists $c>0$, such that for every $N\in\N$, for some $r= r(N)$, 
\[
 \limsup_{n \to \infty} \P(N_{r,R}(\rT_n) \ge N) > c\quad\forall R\ge r,
\]
where we used the fact that  $N_{r,R}$ is decreasing in $R$. Since $N_{r,R}$ is also increasing in $r$, we may assume without loss of generality that $r(N)\to\infty$ as $N\to\infty$.
Under this assumption, we show that for large $N$,
\begin{equation}
\label{eq:numberofrays}
\limsup_{n \to \infty} \P(N_{2r,R/2}(\Dop[\rT_n]) \ge N/2) \geq c/2,\quad\forall R\ge 8r.
\end{equation}
By Proposition~\ref{prop:limit_Tdinf}, one readily checks that this implies that $\Dop(\rT_n)$ is not precompact in $\M{\Tdinf}$, which will yield the required contradiction.

In order to show \eqref{eq:numberofrays}, it is enough to show that for large $r$ and $N$, for every tree $\rT\in\Trlinf$,
\begin{equation}
\label{eq:numberofrays_2}
 \forall R\ge r: N_{r,R}(\rT) \ge N \quad\Longrightarrow \quad\forall R\ge 8r: \P(N_{2r,R/2}(\Dop[\rT]) \ge N/2) \geq 1/2.
\end{equation}
For this, fix $R\ge 8r$ and consider the vertices of $\rT$ at distance $r$ from the root that contribute to the quantity $\mathcal N := N_{r,R}(\rT)$. To each such vertex $v$, associate a single path (among possibly many) of length $R$ that links the root $\rho$ to a vertex at distance $R$, and contains~$v$. Let $F_v$ be the event that this path has less than $2r$ vertices in $V_0$ at distance $\le r$ from the root, but more than $R/2$ vertices in $V_0$ at distance $\le R$ from the root. For a vertex $v$, the probability $\P(F_v)$ is independent of $v$, and arbitrary close to 1 for large $r$, so in particular larger than~$3/4$ for $r$ large enough. 

Now let $\mathcal N^{\bar F}$ be the number of vertices $v$ contributing to $\mathcal N$ that satisfy the complementary event $\bar F_v$. Then
$\E(\mathcal N^{\bar F}|\mathcal N) = \mathcal N (1-\P(F_v))$, and by the Markov inequality,  for $\mathcal N$ and $r$ large enough,
$$\P(\mathcal N^{\bar F} \geq \mathcal N/2 ) \le 2 (1-\P(F_v)) \leq 1/2
\text{ or, equivalently, }
\P(\mathcal N-\mathcal N^{\bar F} > \mathcal N/2 ) \ge 1/2.$$
Together with the pointwise inequality $N_{2r,R/2} (\Dop[\rT_n]) \geq \mathcal N-\mathcal N^{\bar F}$ this proves \eqref{eq:numberofrays_2} and finishes the proof.
\end{proof}

\section{Tree rescaling: proofs of Theorems~\ref{th:self_similar} and \ref{th:inverse_limit}}
\label{sec:4}

In this section, we will prove a coupling lemma (Lemma~\ref{lem:shrink_coupling} below), which, together with Theorem~\ref{th:topologies}, will yield Theorems~\ref{th:inverse_limit} and \ref{th:self_similar}.

The space of discrete trees $\Tdinf$ is naturally embedded into the space 
$\Trlinf$ via the following embedding $\iota$: given $T \in \Tdinf$, we define $\tembed{T}=(\rV, d, 
\root, \mu)$ as follows:
\begin{itemize}
\item The set of vertices is given by 
$\rV= \{ (i,x), i \in  T \, \setminus \, \{\root\}, x \in [0,1) \} \cup \{(\root,0)\}$.
\item The distance is defined by $d((i,x),(j,y)) = d_T(i,j)- x - y$ if 
($i \not\preceq_T j$ and $j \not\preceq_T i$) 
and $d((i,x),(j,y)) = d_T(i,j)- x + y$ if $j \preceq_T i$, and it is symmetric in its arguments.
\item $\mu$ is the Lebesgue measure $\ell_{\rT}$.
\end{itemize}
Informally, the tree $\tembed{T}$ is defined from $T$ 
by adding segments of length 1 between the vertices of $T$, 
and the element $(i,x)$ of $\rV$ is at distance $x$ of $(i,0)$ on the path $\llbracket \root,i \rrbracket$ to the root.

\begin{lemma}
\label{lem:shrink_coupling}
Let $(p_n)_{n\ge0}$ and $(q_n)_{n\ge0}$ be sequences of positive numbers such that $p_n\to 0$ and $q_n\to 0$ as $n\to\infty$. Let $T_n\in\M{\Tdinf}$ be a sequence of random rooted trees. Then the following statements are equivalent:
\begin{enumerate}
 \item The sequence $(\Sop[p_n,q_n](T_n))_{n\ge 0}$ is tight in\footnote{Here, and below, we say that a family of random trees on $\Tdinf$ is \emph{tight in $\Tdinfall$} if the family of their laws, seen as laws on $\Tdinfall$, are tight.} $\Tdinfall$.
 \item The sequence $(\Dop[\Srop[p_n,q_n](\tembed{T_n})])_{n\ge 0}$ is tight in $\Tdinfall$.
\end{enumerate}
In this case, for every $m\ge 1$ and $\ep>0$, there exists an integer $n_0$ and for every $n\ge n_0$ a coupling between $\Sop[p_n,q_n](T_n)$ and $\Dop[\Srop[p_n,q_n](\tembed{T_n})]$, such that
\begin{equation}
 \label{eq:coupling}
 \P\left(\left[\Sop[p_n,q_n](T_n)\right]^{\le m} = \left[\Dop[\Srop[p_n,q_n](\tembed{T_n})]\right]^{\le m}\right) > 1-\ep.
\end{equation}
\end{lemma}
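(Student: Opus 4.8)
The plan is to prove the coupling statement directly and derive the tightness equivalence as a corollary. The heart of the matter is to compare two constructions of a discrete tree from $T_n$ after rescaling by small factors $p_n,q_n$: on one side, the combinatorial contraction $\Sop[p_n,q_n](T_n)$; on the other, the geometric procedure of embedding $T_n$ as a unit-edge-length $\R$-tree $\tembed{T_n}$, rescaling it via $\Srop[p_n,q_n]$, and then re-discretizing via a Poisson sampling $\Dop$. The key observation to exploit is that these two procedures produce \emph{the same} combinatorial object up to a coupling error that vanishes as $p_n,q_n\to 0$, provided we only look at the ball of radius $m$ around the root.

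\medskip

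\noindent\textbf{Main coupling step.} First I would fix $T_n$ (work conditionally on its realization, then integrate) and analyze the two constructions edge by edge. In $\Sop[p_n,q_n](T_n)$, each non-spine vertex is kept independently with probability $p_n$ and each spine vertex with probability $q_n$, and the contracted tree is built by restricting the ancestral order to the kept vertices. In $\Dop[\Srop[p_n,q_n](\tembed{T_n})]$, the embedded tree has each original edge replaced by a segment of length $1$; after $\Srop$, an off-spine segment has length $p_n$ and length measure $p_n\,\ell$, while an on-spine segment has length $q_n$ but its length-measure contribution is scaled so that the $V_0$-intensity along the spine is $q_n$ per original edge (and, crucially, spine points are sampled only according to the length measure, so land in $V_0$). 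The discretization then samples $V_0$-points according to a Poisson process of intensity $\ell_{\Srop(\tembed{T_n})}$. The plan is to show that, per original edge, the event ``at least one $V_0$-point falls on this segment'' has probability $1-e^{-p_n}$ off the spine and $1-e^{-q_n}$ on the spine, to be compared with the Bernoulli($p_n$), Bernoulli($q_n$) keep-probabilities in the contraction. Since $1-e^{-p_n} = p_n + O(p_n^2)$ and similarly for $q_n$, I would couple the Bernoulli and Poisson-occupancy indicators so that they agree except on an event of probability $O(p_n^2)+O(q_n^2)$ per edge, and verify that the induced ancestral orders coincide whenever the occupancy patterns of the relevant edges agree and no edge receives two $V_0$-points (a further $O(p_n^2)+O(q_n^2)$ discrepancy). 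The $V_1$-points contribute leaves that are never ancestors, matching the bouquet structure, and I would argue they do not affect the restriction to a ball of fixed radius beyond a controlled error.

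\medskip

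\noindent\textbf{Localization to the ball of radius $m$.} The subtlety is that both $[\Sop[p_n,q_n](T_n)]^{\le m}$ and $[\Dop[\Srop[p_n,q_n](\tembed{T_n})]]^{\le m}$ depend only on finitely many kept/sampled vertices, but the \emph{number} of original edges of $T_n$ that influence this ball is itself random and grows as $p_n,q_n$ shrink. To handle this I would first truncate to the event that at most $K$ kept vertices lie within graph-distance $m$ in the contracted tree, using tightness (assumption (1) or (2)) to choose $K$ so this event has probability $>1-\ep/2$ uniformly in $n$; on this event only $O(K\cdot m/p_n)$ original edges are relevant, and summing the per-edge discrepancy $O(p_n^2+q_n^2)$ over this many edges gives a total error $O(Km(p_n+q_n^2/p_n))\to 0$. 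Choosing $n_0$ large enough to push this below $\ep/2$ yields \eqref{eq:coupling}. The equivalence of the two tightness statements then follows immediately: the coupling \eqref{eq:coupling} shows that for each $m$ the restrictions agree with high probability, so the law of one restricted tree is tight if and only if the law of the other is, and tightness in $\Tdinfall$ is exactly tightness of every finite restriction by the definition of the local topology.

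\medskip

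\noindent\textbf{Anticipated obstacle.} The main difficulty I expect is the \emph{localization} step: controlling the number of original $T_n$-edges that feed into the radius-$m$ ball of the contracted tree uniformly in $n$, since this count blows up like $1/p_n$. The resolution is to note that the per-edge coupling error is quadratic ($O(p_n^2)$) while the number of relevant edges is only linear in $1/p_n$, so their product still vanishes; making this bookkeeping rigorous — in particular handling the on-spine edges where $q_n$ and $p_n$ may decay at different rates, and ensuring the contribution $q_n^2/p_n$ is controlled — will require care but no new ideas. A secondary technical point is that the map $\Sop[p,q]$ is only measurable (not continuous) when $p\neq q$, so I would work throughout with the continuous approximations $\Sop[p,q]^M$ and $\Srop[p,q]^R$ introduced earlier and pass to the limit, exactly as in the discussion preceding Definition~\ref{def:self_similar_discrete}.
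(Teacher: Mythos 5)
Your proposal is correct and takes essentially the same route as the paper: both proofs couple the Bernoulli keep-indicators of $\Sop$ with the Poisson occupancy indicators of $\Dop\circ\Srop\circ\iota$ (per-vertex discrepancy $O(p_n^2)$ off the spine and $O(q_n^2)$ on it, plus the event of a doubly-occupied segment), and both localize by using tightness of one of the two sequences to bound the number of original vertices that can influence the radius-$m$ ball, so that the total coupling error vanishes and the two restricted trees agree with probability $1-\ep$. The one bookkeeping point to fix is the spine accounting: the relevant on-spine vertices number $O(m/q_n)$ per end rather than $O(m/p_n)$, so their contribution to the error is $O(mq_n)\to 0$ and the potentially divergent term $q_n^2/p_n$ in your stated bound disappears once spine and off-spine vertices are counted separately.
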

\begin{proof}
Recall that by definition of the topology of local convergence on $\Tdinf$, a sequence of random discrete trees $\widetilde T_n$ in $\M{\Tdinf}$ is tight if and only if for every $m\ge 1$, the sequence $(\#V(\widetilde T_n^{\le m}))_{n\ge1}$ is tight. In order to apply this, we first need to define a suitable coupling between the operations $\Sop$ and $\Srop$ and suitable one-dimensional stochastic processes.

Let $T\in\Tdinf$. Recall that in the construction of $\Sop(T)$, the vertices are colored with two colors, say black and white; the black vertices (and the root) are retained, whereas the white vertices are discarded. Given a tree $T\in\Tdinf$ we can couple this operation with two infinite sequences $(B_i)_{i\ge 1}$ and $(B'_i)_{i\ge 1}$ of iid random variables distributed according to the Bernoulli distribution with parameter $p$ and $q$, respectively: we perform a 
breadth-first traversal
of the vertices of the tree starting from the root and color the vertex visited at step $i$ according to $B_i$ (1 = black and 0 = white) if it is an off-spine vertex, and according to $B'_i$ if it is a spine vertex. Given an integer $m\ge 1$, we add an additional rule: When a black vertex is visited which has exactly 
$m$ black ancestors, then 
the subtree above it
is subsequently ignored by the algorithm. The restriction of the tree $T$ to the black vertices then exactly gives the tree $\Sop(T)^{\le m}$. Note that this algorithm terminates almost surely since $\Sop(T)$ is locally finite by definition, hence $\Sop(T)^{\le m}$ is finite and the breadth-first traversal only has to go to a certain (random, but finite) depth of the tree $T$.

In order to construct the tree $\Dop[\Srop(\tembed{T})]^{\le m}$ one can proceed in a similar manner, but using now two Poisson processes $(P_t)_{t\ge 0}$ and $(P'_t)_{t\ge 0}$ with parameters $p$ and $q$ (respectively) defined on the $\R$-tree $\tembed{T}$. We omit the details.

For every $M\ge 0$, we now couple the sequences $(B_i)_{i\ge 1}$ and $(B'_i)_{i\ge 1}$ with the Poisson processes $(P_t)_{t\ge 0}$ and $(P'_t)_{t\ge 0}$ in such a way that with probability $1-O(M(p\vee q))$, for every $i\le M/p$, $B_i = 1$ if and only if $P_i-P_{i-1} = 1$ and for every $i\le M/q$, $B'_i = 1$ if and only if $P'_i-P'_{i-1} = 1$. It is now easy to check that for large $M$ and small $p$ and $q$, on the event that (either) $\# V(\Sop(T)^{\le m}) \le M/2$ or $\# V(\Dop[\Srop(\tembed{T})]^{\le m}) \le M/2$, both trees agree with high probability. This implies that if the first or second statement of the lemma holds, then \eqref{eq:coupling} is true, which in turn implies equivalence of the two statements. This finishes the proof of the lemma.
\end{proof}

\begin{proof}[Proof of Theorem~\ref{th:inverse_limit}]
We start with the easy direction: Let  $\rT \in \M{\Trlone}$. 
The family 
$T_n=\Dop[\rT][n]$
is a compatible family of random rooted trees since, for $n \geq m \geq 1$:
\[
\Cop{T_n}{m} = \Cop{\Dop[\rT][n]}{m} = \Dop[\rT][m]=  T_m.
\]
Now let $(T_n)_{n\in\N}$ be a compatible family of random trees and let $(B_n)_{n\in\N}$ be a sequence of Bin($n,1/n$)-distributed random variables, independent of $(T_n)_{n\in\N}$. 
We then have the following equality,
\[\Cop{T_n}{B_n} =  T_{B_n}.\]  
On the RHS, there is convergence towards $T_{B}$ as $n \to \infty$, where $B$ is a Poi(1)-distributed random variable independent of $(T_n)_{n\in\N}$. On the LHS, we have
$\Cop{T_n}{B_n}=\Sop[1/n](T_n)$, and by
 Lemma~\ref{lem:shrink_coupling} there exists a coupling such that the equality
\[
\Sop[1/n](T_n)=\Dop[\Srop[1/n](\tembed{T_n})]
\]
holds with high probability as $n \to \infty$. 
The space  $\Trlone$ is compact by Lemma~\ref{lem:Trlone_compact}, therefore the sequence 
$(\Srop[1/n](\tembed{T_n}))_{n\ge1} \in \M{\Trlone}$ is tight.
Since the map $\Dop$ is continuous and injective by Theorem~\ref{th:topologies},
every subsequential limit $\rT$ has $T_{B}=\Dop[\rT]$ and is thus unique. Furthermore, conditioning on $B = m$ yields $T_m = \Dop[\rT][m]$ for each $m\in\N$.
\end{proof}

\begin{proof}[Proof of Theorem~\ref{th:self_similar}]
One direction is obvious: Let  $\rT \in \M{\Trlinf}$ be a $(p,q)$-self-similar random $\R$-tree. Lemma~\ref{lem:commutation} then yields
\[
 \Sop(\Dop[\rT]) = \Dop(\Srop(\rT)) = \Dop(\rT),
\]
whence the discrete tree $\Dop[\rT]$ is $(p,q)$-self-similar as well. 

For the other direction, let $T\in\M{\Tdinf}$ be a  $(p,q)$-self-similar random rooted tree, i.e.\ $\Sop(T) = T$. Iterating this equality yields for each integer $n\ge 1$,
\begin{equation}
\label{eq:th2-1}
\Sop[p^n,q^n](T) = \Sop^n(T)=T.
\end{equation}
In particular, the sequence $(\Sop[p^n,q^n](T))_{n\ge1}$ is tight in $\Tdinfall$. Lemma~\ref{lem:shrink_coupling} now yields that the sequence $(\Dop[\Srop[p^n,q^n](\tembed{T})])_{n\ge1}$ is tight in $\Tdinfall$ and that every subsequential limit equals $T$. 
By Theorem~\ref{th:topologies}, the sequence $(\Srop[p^n,q^n](\tembed{T}))_{n\ge1}$ then converges in law to a random tree $\rT$ taking values in $\Trlinf$ and such that $\Dop(\rT) = T$. This uniquely determines the tree $\rT$ by injectivity of the map $\Dop$. Furthermore, by Lemma~\ref{lem:commutation} and the self-similarity of $T$, we have $\Dop(\Srop(\rT)) = \Sop(T) = T = \Dop(\rT)$, whence, again by injectivity of $\Dop$, $\Srop(\rT) = \rT$. This finishes the proof of the theorem.
\end{proof}

\section{Examples of self-similar trees}
\label{sec:5}

In this section, we construct some examples of $(p,q)$-self-similar $\R$-trees. 
We do not believe it is possible to completely characterize this family, similarly to the situation for self-similar real-valued processes. For simplicity, we restrict ourselves to trees whose spine consists of a single infinite ray only.

\begin{enumerate}
 \item \textbf{Subordination of a real-valued self-similar process.} If $(\rT,d,\root,\mu)$ is a $(p,q)$-self-similar $\R$-tree, then let $\rV_t$ be the subset consisting of the vertices whose most recent ancestor on the spine is at distance at most $t$ from the root. Setting  
 \begin{equation}
\label{eq:Xdef}
X(t) = \mu(\rV_t) - t
\end{equation}
defines a (semi)-self-similar real-valued non-decreasing process with Hurst exponent\footnote{The exponent $H$ is called the Hurst exponent after the study in hydrology \cite{HBS65}, see also the paper \cite{MV68}.}  $H=\log p/\log q$, i.e. 
\begin{equation}
\label{eq:Xss}
(q^{-H}X(qt),t\ge0)\stackrel{\text{law}}{=}(X(t),t\ge0).
\end{equation}
On the other hand, if we are given  such a process $X(t)$, we can construct from it a  $(p,q)$-self-similar $\R$-tree $\rT \in \M{\Trlinf}$. Write $X_c(t)$ for its continuous part and $X_j(t)$ for its jump part. Let $\rT' \in \M{\Trlone}$ be an arbitrary random tree. The random tree $\rT$ is then constructed as follows:
\begin{itemize}
	\item $\Sp(\rT)$ consists of a single infinite ray,
	and $\mu(\Sp(\rT)\cap[0,t]) = X_c(t)+t$,
	\item For every jump time $t$ of $X_j$, we attach an independent copy of $\rT'$ to the spine at distance $t$ of the root, rescaled by the size of the jump $X_j(t)-X_j(t-)$.
\end{itemize}
It is easy to show that the resulting tree is indeed $(p,q)$-self-similar.

 One can easily generalize the above construction. For example, instead of attaching independent rescaled copies of the same tree $\rT'$ to the spine, one can take a $|\log q|$-stationary process $(\rT'(s),s\in\R)$ of rooted, probability-measured $\R$-trees ($|\log q|$-stationary means that  $(\rT(s),s\in\R)$ is equal in law to  $(\rT(s+|\log q|),s\in\R)$), and attach a rescaled copy of $\rT'(\log t)$ at the point $t$ on the spine. One can also introduce a stronger dependency between the process $X(t)$ and the trees. For example, let $X(t)$ be as above and suppose for simplicity that it is a pure-jump process. Let $\mathcal R$ be the set of its \emph{record jumps}, i.e. $r\in \mathcal R$ if and only if $X(r)-X(r-) > X(s)-X(s-)$ for all $s<r$. Write $\R = \{\ldots < r_{-1}<r_0<r_1<\ldots\}$, with $r_0 \le 1 < r_1$ and let $\rT'_n$ be a sequence of iid copies of a probability-measured rooted $\R$-tree. We then construct a $(p,q)$-self-similar tree as follows: For every $t$, let $N(t)$ be such that $r_{N(t)} \le t < r_{N(t)+1}$. Then for each jump time $t$ of $X(t)$, add the tree $\rT'_{N(t)}$ to the spine, rescaled by the size $X(t)-X(t-)$ of the jump. One readily checks that the resulting process is $(p,q)$-self-similar.

\item \textbf{Rescaling along the spine.} Given a $(p,q)$-self-similar $\R$-tree $\rT$, one can easily construct a whole family of self-similar $\R$-trees: Let $\beta > 0$. First, one can rescale the tree along the spine: define a new $\R$-tree $\rT'$ obtained from $\rT$ by mapping a point $t$ on the spine to $t^\beta$. The mass process of this new tree is $X^\beta(t) = X(t^{1/\beta})$, and therefore $(p^{-1}X^\beta(q^\beta t);t\ge0) \stackrel{\text{law}}{=} (X^\beta (t);t\ge0)$. Since we have not changed the structure of the subtrees, it follows that the resulting tree is $(p,q^\beta)$-self-similar.

For $\gamma\in\R$, one can also define a new mass process by setting $X^\gamma(t) = \int_0^t s^{\gamma}dX(s)$, as long as this quantity is finite for some (hence, any) $t>0$. The tree defined by this process in the canonical way (i.e.\ by rescaling the subtrees of the spine and the measure $\mu$ on the spine) is then $(pq^\gamma,q)$-self-similar.

Finally, one can apply the previous scaling to the continuous part $X_c(t)$ of the mass process only, and scale the jump process $X_j(t)$ instead by setting $X_j^\delta(t) = \sum_{s\le t} (X_j(s)-X_j(s-))^\delta$ for some $\delta > 0$. If $\delta$ and $\gamma$ are such that $pq^\gamma = p^\delta$, then the mass process $X^{\gamma,\delta}(t) = X_c^\gamma(t) + X_j^\delta(t)$, if it exists, defines a $(p^\delta,q)$-self-similar tree in the canonical way.
\end{enumerate}

\section{Translation invariant self-similar trees}
\label{sec:6}

In this section, we study self-similar trees which are invariant under translation along the spine. For simplicity, we restrict ourselves to one-ended self-similar trees. We denote the corresponding subspaces of $\Tdinf$ and $\Trlinf$ by $\Tdspine$ and $\Trspine$, respectively. Throughout the section, $T\in\M{\Tdspine}$ will denote a one-ended $(p,q)$-self-similar tree and $\rT = (\rV,d,\root,\mu)\in\M{\Trspine}$ the limiting $\R$-tree obtained from Theorem~\ref{th:self_similar}. We define a shift operator $\S:\M{\Tdspine}\to\M{\Tdspine}$ which maps $T$ to the subtree rooted at the vertex on the spine at distance 1 from the root. 
We say $T$ is \emph{translation invariant}, if $\S T \stackrel{\text{law}}{=} T$. Similarly, for $t\ge0$, we define a shift operator $\S_t:\M{\Trspine}\to\M{\Trspine}$ mapping $\rT$ to its subtree rooted at the vertex on the spine at distance $t$ from the root. Note that $(\S_t)_{t\ge0}$ is a semigroup. We then say that $\rT$ is translation invariant if $\S_t\rT  = \rT$ for every $t\ge0$. The following proposition says that the two notions are equivalent:

\begin{proposition}
\label{prop:translation}
$T$ is translation invariant if and only if $\rT$ is translation invariant.
\end{proposition}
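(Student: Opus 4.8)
The plan is to translate both invariance notions into a single statement about the family of laws $\Pi_t := \mathrm{law}(\S_t\rT)$ on $\Trspine$ (writing $\Pi = \Pi_0$ for the law of $\rT$) and then to exploit the injectivity of $\Dop$ together with a moment--determinacy argument. The common starting point is a \emph{Poisson decomposition of the spine}. Since $\rT$ is one-ended and the spine vertices of $\Dop[\rT]$ are exactly the atoms in $V_0$ lying on the spine (recall the footnote to Lemma~\ref{lem:D_injective}), and since on the spine $\ell_\rT$ is just arc length, these atoms form a rate-$1$ Poisson process on $[0,\infty)$, independent of $\rT$. Consequently the $n$-th spine vertex $v_n$ of $\Dop[\rT]$ lies at a distance $\tau_n$ from the root that is $\mathrm{Gamma}(n,1)$-distributed, and, by the restriction property of Poisson processes, conditionally on $\tau_n=t$ the subtree of $\Dop[\rT]$ rooted at $v_n$ is distributed as $\Dop[\S_t\rT]$. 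Because the extension of $\Dop$ to $\M{\Trlinf}$ is affine (it commutes with mixtures), unconditioning over $\tau_n$ yields the key identity
\[
\S^n T \stackrel{\text{law}}{=} \Dop\Big[\int_0^\infty \Pi_t\, g_n(t)\,dt\Big], \qquad g_n(t) = \frac{t^{n-1}e^{-t}}{(n-1)!},\ n\ge 1.
\]

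The easy direction follows at once: if $\rT$ is translation invariant then $\Pi_t=\Pi$ for all $t$, so $\int_0^\infty \Pi_t\, e^{-t}\,dt = \Pi$, and the case $n=1$ gives $\S T \stackrel{\text{law}}{=}\Dop[\Pi]=T$. For the converse I assume $\S T\stackrel{\text{law}}{=}T$, so that $\S^n T\stackrel{\text{law}}{=}T=\Dop[\Pi]$ for every $n\ge1$. Combining this with the key identity and the injectivity of $\Dop$ (Lemma~\ref{lem:D_injective}) produces the family of integral equations $\int_0^\infty \Pi_t\, g_n(t)\,dt = \Pi$ for all $n\ge1$.

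To extract $\Pi_t=\Pi$ from these, I would test against a bounded continuous functional $F$ on $\Trspine$ and set $\phi(t)=\E[F(\S_t\rT)]$, a bounded measurable function; the equations become $\int_0^\infty \phi(t)\,t^{n-1}e^{-t}\,dt = (n-1)!\,\phi(0)$ for all $n\ge1$. Thus the finite signed measure $\phi(t)e^{-t}\,dt$ has the same moments of every order as $\phi(0)e^{-t}\,dt$; since both have exponentially decaying densities (so that $\int e^{t/2}\,|\phi(t)|e^{-t}\,dt<\infty$), their moment problems are determinate, and hence $\phi(t)=\phi(0)$ for Lebesgue-a.e.\ $t$. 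Letting $F$ range over a countable convergence-determining class gives $\Pi_t=\Pi$ for a.e.\ $t\ge0$. Finally I upgrade this to \emph{every} $t$ using the semigroup property $\Pi_{s+t}=(\S_s)_*\Pi_t$: fixing $t_0>0$, I pick $s\in(0,t_0)$ (a.e.\ value works) with both $\Pi_s=\Pi$ and $\Pi_{t_0-s}=\Pi$, and then $\Pi_{t_0}=(\S_{t_0-s})_*\Pi_s=(\S_{t_0-s})_*\Pi=\Pi_{t_0-s}=\Pi$. Hence $\rT$ is translation invariant.

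I expect the main obstacle to be the rigorous justification of the key identity: checking that re-rooting $\Dop[\rT]$ at its $n$-th spine vertex corresponds, after conditioning on the position $\tau_n$, to discretizing $\S_{\tau_n}\rT$, and that $\Dop$ genuinely commutes with the $\mathrm{Gamma}(n,1)$-mixture over $\tau_n$. The distributional bookkeeping of the Poisson processes on and off the spine (including the possibility of atoms of $\mu-\ell_\rT$ sitting on the spine, which are re-rooted to the new root) must be handled with care. The determinacy step is the other point requiring attention, but it is standard once the uniform bound $|\phi|\le\|F\|_\infty$ furnishes the exponential tail.
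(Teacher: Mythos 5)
Your argument is correct, and although it is built on the same cornerstone as the paper's proof --- the identity $\S^n T \stackrel{\text{law}}{=}\Dop[\S_{E_1+\cdots+E_n}\rT]$ with $E_1,E_2,\ldots$ i.i.d.\ exponential, which is exactly the paper's \eqref{eq:SnT} and is obtained there from the same Poisson-on-the-spine observation --- your handling of the converse direction is genuinely different. The paper uses the standing assumption that $T$ and $\rT$ are $(p,q)$-self-similar: it inserts $\Sop^k$ and $\Srop^k$ into the chain of identities so as to replace the shift $E_1+\cdots+E_n$ by $q^k(E_1+\cdots+E_n)$, which by the law of large numbers can be made to converge to any prescribed $t$ from the left, and then concludes via the strong left-continuity of the semigroup $(\S_t)_{t\ge0}$ (Lemma~\ref{lem:shift_continuous}, proved for precisely this purpose). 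You instead keep only the Gamma mixtures, recast them as the vanishing of all moments of the signed density $(\phi(t)-\phi(0))e^{-t}$, and invoke determinacy (analyticity of the Laplace transform in a neighbourhood of $0$, guaranteed by the bound $|\phi|\le\|F\|_\infty$) to get $\Pi_t=\Pi$ for a.e.\ $t$, upgrading to every $t$ by the semigroup identity rather than by continuity. Your route buys generality: it never uses self-similarity, so it proves that for \emph{any} one-ended $\rT\in\M{\Trspine}$ translation invariance of $\Dop[\rT]$ forces translation invariance of $\rT$, and it dispenses with Lemma~\ref{lem:shift_continuous}. The paper's route stays within the soft probabilistic and topological toolkit it has already built and avoids the analytic determinacy step. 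The delicate points you flag --- identifying the subtree above the $n$-th spine vertex, conditionally on its position, with a discretization of the shifted tree (including the treatment of atoms of $\mu-\ell_\rT$ on the spine), and the measurability of $t\mapsto\S_t\rT$ needed to form the mixtures --- are exactly the points the paper also leaves implicit when asserting \eqref{eq:SnT}, so they do not constitute a gap specific to your argument.
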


\begin{lemma}
 \label{lem:shift_continuous}
 The semigroup $(\S_t)_{t\ge0}$ is strongly left-continuous, i.e.\ for every $\rT\in\M{\Trspine}$, the function $t\mapsto \S_t(\rT)$ is left-continuous.
\end{lemma}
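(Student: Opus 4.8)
The plan is to prove the statement pathwise first, for a fixed deterministic representative $\rT=(\rV,d,\root,\mu)\in\Trspine$, and then lift it to $\M{\Trspine}$. Since the spine is a single ray, write $v_t$ for the point on $\Sp(\rT)$ at distance $t$ from the root, and for $x\in\rV$ let $\tau(x)=d(\root,\vec x)$ be the distance from the root of its most recent spine ancestor $\vec x$. As a subset of $\rV$, the tree $\S_s\rT$ is exactly $\{x:\tau(x)\ge s\}$ rooted at $v_s$; hence for $s<t$ the trees $\S_s\rT$ and $\S_t\rT$ coincide above $v_t$ and differ only by the region $D_s:=\{x:s\le\tau(x)<t\}$ hanging off the spine segment $\llbracket v_s,v_t\rrbracket$. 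I will show that $\S_s\rT\to\S_t\rT$ in the GHP topology as $s\uparrow t$.

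By the truncation characterization of GHP convergence (condition \ref{eq:inmeasure}) it suffices, for each fixed $R\ge0$, to compare the truncations $(\S_s\rT)^{\le R}$ and $(\S_t\rT)^{\le R}$ in the compact GHP topology. The key observation is that $D_s\cap(\S_s\rT)^{\le R}$ decreases to the empty set as $s\uparrow t$: if $\tau(x)<t$ then $x\notin\S_s\rT$ as soon as $s>\tau(x)$, so $\bigcap_{s<t}D_s=\emptyset$. Since $\mu$ is boundedly finite and $\mu\ge\ell_\rT$, both $\mu$ and $\ell_\rT$ are finite on any ball of radius $R$, so continuity of measure from above yields $\ell_\rT(D_s\cap(\S_s\rT)^{\le R})\to0$ and $\mu(D_s\cap(\S_s\rT)^{\le R})\to0$. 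Moreover, the off-spine path from $v_{\tau(y)}$ to a point $y\in D_s\cap(\S_s\rT)^{\le R}$ lies entirely inside $D_s\cap(\S_s\rT)^{\le R}$, so its length $d(v_{\tau(y)},y)$ is at most $\ell_\rT(D_s\cap(\S_s\rT)^{\le R})$; combined with $d(v_t,y)=(t-\tau(y))+d(v_{\tau(y)},y)$ this gives $\sup_{y\in D_s\cap(\S_s\rT)^{\le R}}d(v_t,y)\le(t-s)+\ell_\rT(D_s\cap(\S_s\rT)^{\le R})=:\delta_s\to0$. Thus the extra region does not merely lose mass but collapses metrically onto $v_t$.

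With this in hand I would exhibit the explicit correspondence given by the projection $\pi:\S_s\rT\to\S_t\rT$ that is the identity on $\{x:\tau(x)\ge t\}$ and sends every point of $D_s$ (in particular the root $v_s$) to $v_t$. A short case analysis on whether $x,y$ lie in $\S_t\rT$ or in $D_s$ shows that $\pi$ matches the two roots and distorts distances within $(\S_s\rT)^{\le R}$ by at most $2\delta_s$. Pushing $\mu$ forward through $\pi$ relocates a total mass $\mu(D_s\cap(\S_s\rT)^{\le R})$ onto $v_t$ and displaces the remaining mass by at most $2\delta_s$, so the Prokhorov distance of the restricted measures is bounded by $\mu(D_s\cap(\S_s\rT)^{\le R})+2\delta_s$ plus a boundary contribution from the annulus $\{R-2\delta_s<d(v_s,\cdot)\le R\}$. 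Choosing $R$ to be a continuity radius (with $\mu$ giving no mass to the sphere $\partial(\S_t\rT)^{\le R}$), which is possible for all but countably many $R$ by bounded finiteness of $\mu$, and using the radius adjustment $R_s\to R$ permitted by \ref{eq:inmeasure}, makes this last term vanish. Hence $\S_s\rT\to\S_t\rT$ in GHP.

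Finally, for a random tree $\rT\in\M{\Trspine}$ and any sequence $s_n\uparrow t$, the pathwise statement gives $\S_{s_n}\rT\to\S_t\rT$ for every realization, so $\S_{s_n}(\rT)\to\S_t(\rT)$ in law, which is the asserted left-continuity. The main obstacle is exactly the identification of the limiting extra region, and this is what makes the statement one-sided: as $s\uparrow t$ the region $D_s$ empties out because any subtree attached precisely at $v_t$ already belongs to $\S_t\rT$, whereas for $s\downarrow t$ the analogous region $\{x:t\le\tau(x)<s\}$ shrinks to $\{x:\tau(x)=t\}$, which may carry positive mass and length and thereby obstructs right-continuity. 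The finiteness of $\ell_\rT$ on balls, a consequence of $\mu\ge\ell_\rT$ together with bounded finiteness of $\mu$, is precisely what upgrades the set-theoretic collapse $D_s\downarrow\emptyset$ to genuine GHP smallness.
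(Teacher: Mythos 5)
Your argument is correct and follows the same overall strategy as the paper's proof: reduce to a deterministic representative, reduce (via the definition \eqref{eq:GHPinf} of $d_{GHP}$ and dominated convergence) to comparing truncations at a fixed radius, and observe that the only discrepancy between $\S_s\rT$ and $\S_t\rT$ is the region $D_s$ of points grafted onto the spine in $\llbracket v_s,v_t\llbracket$, whose mass vanishes because $D_s$ decreases to the empty set as $s\uparrow t$. Where you genuinely diverge is in how you bound the compact GHP distance between the truncations: you build an explicit projection $\pi$ collapsing $D_s$ onto $v_t$, prove the metric estimate $d(v_t,y)\le (t-s)+\ell_\rT(D_s\cap(\S_s\rT)^{\le R})$ using $\mu\ge\ell_\rT$, and control the distortion, the Hausdorff distance and the Prokhorov displacement by hand. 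The paper short-circuits all of this with Lemma~\ref{lem:GP_GHP}, which gives $d_{GHP}^c\le 3\,d_{GP}^c$ on $\Trlfin$ (again exploiting $\mu\ge\ell_\rT$, now through full support of $\mu$), so that only the root displacement and the Prokhorov distance need to be estimated and the Hausdorff part comes for free. Your route is more self-contained and your identification of the vanishing set is arguably sharper than the annulus bound written in the paper (the subtrees hanging off $\llbracket v_{t-\ep},v_t\llbracket$ need not lie in the annulus $\{t-\ep\le d(\root,\cdot)<t\}$, though the corrected set still decreases to $\emptyset$), at the cost of extra correspondence bookkeeping. Two small repairs: the reduction to fixed-radius truncations should be justified by \eqref{eq:GHPinf} plus dominated convergence rather than by Lemma~\ref{eq:inmeasure}, which goes in the opposite direction (it extracts radii from an already established GHP convergence); and the sets $D_s\cap(\S_s\rT)^{\le R}$ are not nested in $s$ because the centre $v_s$ of the ball moves, so continuity of measure from above should be applied to the nested majorants $D_s\cap\{x:d(\root,x)\le t+R\}$ instead.
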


\begin{proof}
It is enough to check the lemma for a deterministic tree $\rT \in \Trspine$.
Consider the subtrees $\rT_1=\S_{t-\ep}(\rT)$ and $\rT_2=\S_t(\rT)$ rooted at the spine vertices at distance $t-\ep$ and $t$ from $\rho$, respectively denoted by $\rho_1$ and $\rho_2$. From the definition  \eqref{eq:GHPinf} of the GHP distance and the dominated convergence Theorem, 
it is enough to prove that the GHP distance between the compact trees $\rT_1^{\leq r}$ and $\rT_2^{\leq r}$ has a null $\ep \to 0$ limit for any fixed  value of $r>0$. We let $\mu_1$ and $\mu_2$ be the measures associated with $\rT_1^{\leq r}$ and $\rT_2^{\leq r}$ respectively.
We have $d(\root_1,\root_2) = \ep$, and, with $\rV^{<r}$ the restriction of $\rV$ to the \textit{open} ball of radius $r$ centered at $\rho$, it holds
$$d_{P}^c(\mu_1,\mu_2) \leq \mu(\rV^{<t} \, \backslash \, \rV^{< t- \ep}) +
\mu(\rV^{<t+r} \, \backslash \, \rV^{< t+r-\ep}) $$
and the last expression has a null $\ep \to 0$ limit since the two sets on the RHS decrease to the null set. These elements combined with Lemma \ref{lem:GP_GHP} now allow to conclude since: $$d_{GHP}^c(\rT_1^{\leq r},\rT_2^{\leq r}) \leq 3 \, d_{GP}^c(\rT_1^{\leq r},\rT_2^{\leq r}) \leq d(\root_1,\root_2)+ d_{P}^c(\mu_1,\mu_2).$$ 
\end{proof}

\begin{proof}[Proof of Proposition~\ref{prop:translation}]
By Theorem~\ref{th:self_similar}, the tree $T$ is obtained from $\rT$ by sampling vertices according to a Poisson process on $\rT$ with intensity $\mu$. By definition of the sampling procedure, the vertices on the spine are those which are sampled according to $\ell_\Sp$, where $\ell_\Sp = \ell_{\rT}|_{\Sp(\rT)}$ is the restriction of the length measure $\ell_\rT$ to the spine. It follows that 
\begin{equation}
\label{eq:SnT}
\S^nT = \Dop[\S_{E_1+\ldots+E_n}\rT] ,
\end{equation}
where $E_1,\ldots,E_n$ are independent exponential random variables with parameter 1, independent from $\rT$. Setting $n=1$ in \eqref{eq:SnT} directly yields the ``if'' statement of the proposition.

Now suppose that $T$ is translation invariant. Then, for every $n\ge 0$ and $k\ge0$, we have
\begin{align*}
 \Dop[\rT] &= T && \text{by hypothesis on $T$ and $\rT$}\\
 &= \Sop^k T && \text{by the self-similarity of $T$}\\
 &= \Sop^k \S^nT && \text{by translation invariance of $T$}\\
 &= \Sop^k \Dop[\S_{E_1+\ldots+E_n}\rT] && \text{by \eqref{eq:SnT}}\\
 &= \Dop[\Srop^k \S_{E_1+\ldots+E_n}\rT] && \text{by Lemma~\ref{lem:commutation}}\\
 &= \Dop[\S_{q^k E_1+\ldots+q^k E_n} \Srop^k\rT] && \text{by definition}\\
 &= \Dop[\S_{q^k E_1+\ldots+q^k E_n} \rT] && \text{by the self-similarity of $\rT$}.
\end{align*}
Since $\Dop$ is injective by Lemma~\ref{lem:D_injective}, this implies for every $n\ge 0$ and $k\ge0$,
\[
 \rT = \S_{q^k E_1+\ldots+q^k E_n} \rT.
\]
By the law of large numbers, for any $t\ge 0$, we can now let $k,n\to\infty$ in such a way that $q^k E_1+\ldots+q^k E_n$ converges from the left to $t$ almost surely. By Lemma~\ref{lem:shift_continuous}, this yields the statement.
\end{proof}

We now show that for translation invariant trees, the range of possible values for $p$ and $q$ is restricted. The mass process $X(t)$ defined in \ref{eq:Xdef} is real-valued, increasing, and semi-self-similar process with Hurst exponent $H=\log p/\log q$. Recall $X_c(t)$ and $X_j(t)$ denote respectively the continuous part and the jump part of $X(t)$, and note that $X_c(t) = \mu(\Sp(\rT)\cap[0,t])-t$.

\begin{proposition}
\label{prop:ti}
Suppose $\rT\in\M{\Trspine}$ is $(p,q)$-self-similar, translation invariant and non-degenerate (i.e.\ it is not isometric to $\R_+$). Then $q\ge p$. Moreover, if $q>p$, then $X_c \equiv 0$ almost surely, and if $q=p$, then $X_j \equiv 0$ almost surely and $X_c(t) = X(1) \, t$ for every $t\ge 0$.
\end{proposition}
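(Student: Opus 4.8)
The plan is to push everything onto the mass process $X$ of \eqref{eq:Xdef} and exploit two features it inherits: \emph{stationarity of increments} from translation invariance, and \emph{semi-self-similarity} from $(p,q)$-self-similarity. First I would record that the subtree $\S_t\rT$ has mass process $(X(t+s)-X(t))_{s\ge0}$, so that translation invariance ($\S_t\rT\deq\rT$ for all $t$, available by Proposition~\ref{prop:translation}) is equivalent to $(X(t+s)-X(t))_{s\ge0}\deq(X(s))_{s\ge0}$; in particular the increment sequence $\xi_k:=X(k)-X(k-1)$ is stationary. By \eqref{eq:Xss} and $q^{-H}=p^{-1}$ (recall $H=\log p/\log q$, so $q\gtrless p\iff H\lessgtr1$), self-similarity gives the scaling $(X(qt))_{t\ge0}\deq(p\,X(t))_{t\ge0}$, hence $X(q^{-n})/q^{-n}\deq(q/p)^n X(1)$. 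The canonical decomposition $X=X_c+X_j$ is preserved by both operations, so $X_c$ and $X_j$ are each nondecreasing, have stationary increments, and satisfy the same scaling.

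For $q\ge p$ I would run Birkhoff's ergodic theorem on $(\xi_k)$: since $\xi_k\ge0$, $X(t)/t\to L:=\E[\xi_1\mid\mathcal I]\in[0,\infty]$ almost surely as $t\to\infty$ (interpolating by monotonicity). If $q<p$ then $(q/p)^n\to0$, so the identity in law above forces $X(q^{-n})/q^{-n}\to0$ in distribution, whence $L=0$ almost surely; as $L=\E[\xi_1\mid\mathcal I]$ with $\xi_1\ge0$, this yields $\xi_1=0$ a.s.\ and therefore $X\equiv0$, contradicting non-degeneracy. Thus $q\ge p$.

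For the case $q=p$ I would argue in two steps. To see $X_j\equiv0$, note that stationarity of increments makes the jumps of $X_j$ a time-stationary marked point process whose mean size-intensity $\Sigma(a)=\E[\#\{s\le1:\Delta X_j(s)>a\}]$ satisfies, by the scaling with $p=q$, the relation $\Sigma(qa)=q^{-1}\Sigma(a)$; a nonzero solution forces $\Sigma(a)\asymp1/a$, so that the expected total size of jumps smaller than $1$ per unit length of spine is infinite, which (jumps being masses of attached subtrees) is incompatible with $\mu$ being boundedly finite, i.e.\ with $\rT\in\Trlinf$. Hence $X_j\equiv0$ and $X=X_c$ is a continuous, nondecreasing, $1$-self-similar process with stationary increments. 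To get linearity I would use the covariance computation for self-similar processes with stationary increments: once second moments are available one finds $\E[X(s)X(t)]=st\,\E[X(1)^2]$ and hence $\E[(X(t)-tX(1))^2]=0$, giving $X(t)=t\,X(1)$ a.s.\ for all $t$ by monotonicity; since $X(1)$ need not be integrable, this must be run after a truncation that is afterwards removed by monotone limits.

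The remaining case $q>p$ (so $H>1$) is where I expect the main obstacle to lie: one must show $X_c\equiv0$. The absolutely continuous component is quick: its density $x(\cdot)$ is a stationary process, and matching the densities in $X_c(qt)\deq pX_c(t)$ together with stationarity gives the marginal identity $x(0)\deq(p/q)\,x(0)$; since $p/q<1$, a nonnegative variable equal in law to a strict contraction of itself vanishes (e.g.\ $\E[e^{-x(0)}]=\E[e^{-(p/q)^n x(0)}]\to1$), so the absolutely continuous part is $0$. The genuinely delicate point is excluding a nontrivial \emph{singular} continuous part in the infinite-mean regime — note $H>1$ forces $\E[X_c(1)]=\infty$, so the shortcut ``$\E[X_c(t)]=ct$ with $c(q-p)=0$'' is unavailable. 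The route I would take pits sample-path continuity against self-similarity: uniform continuity gives $\omega(q^n):=\sup_{|s-t|\le q^n}|X_c(s)-X_c(t)|\to0$ a.s., while the maximal increment over the $\asymp q^{-n}$ windows of length $q^n$ in $[0,1]$, each distributed as $p^nX_c(1)$, should be of order one once one establishes the heavy tail $\PP(X_c(1)>x)\gtrsim x^{-1/H}$ (itself a consequence of semi-self-similarity plus infinite mean), since then $q^{-n}\,\PP(X_c(1)>\delta p^{-n})\asymp\delta^{-1/H}$. Turning this first-moment/Bonferroni heuristic into an actual lower bound on $\PP(\omega(q^n)>\delta)$ requires controlling the dependence between windows, and this tail-and-dependence estimate is the real work; alternatively one may invoke the known rigidity of (semi-)self-similar processes with stationary increments, according to which a nondecreasing continuous such process with $H>1$ is necessarily trivial.
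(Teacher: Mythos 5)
Your reduction to the mass process and your proof that $q\ge p$ are sound and essentially coincide with the paper's route (which adapts Vervaat's Theorem 3.1a): the ergodic limit $X(t)/t\to\E[\xi_1\mid\mathcal I]$ played against the distributional identity $X(q^{-n})/q^{-n}\deq(q/p)^nX(1)$ forces $X\equiv0$ when $q<p$. The other two conclusions, however, are not actually established. In the case $q=p$, the step ``$\Sigma(a)\asymp 1/a$, hence infinite expected jump mass per unit spine length, hence incompatible with $\mu$ being boundedly finite'' is a non sequitur: a boundedly finite random measure can perfectly well have infinite expected mass (indeed, for nontrivial translation-invariant examples with $q>p$ one necessarily has $\E[X(1)]=\infty$), and moreover $\Sigma(a)$ may equal $+\infty$ for every $a$, making your functional equation vacuous. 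Your fallback covariance identity $\E[X(s)X(t)]=st\,\E[X(1)^2]$ requires both $\E[X(1)^2]<\infty$, which you cannot assume, and \emph{full} self-similarity (from \eqref{eq:Xss} you only get $\E[X(q^ns)^2]=p^{2n}\E[X(s)^2]$), and the truncation you propose destroys exactly the stationarity and scaling the computation rests on. The efficient argument — Vervaat's, which the paper adapts — runs the other way: $X(n)/n\to\E[\xi_1\mid\mathcal I]$ a.s.\ while $X(t)/t\deq X(1)=\xi_1$ along $t=\lceil q^{-m}\rceil$ (càdlàg paths let one pass to the limit), so $\E[\xi_1\mid\mathcal I]\deq\xi_1$ and conditional Jensen forces $\xi_1=\E[\xi_1\mid\mathcal I]$ a.s.; linearity on a dense set plus monotonicity gives $X(t)=tX(1)$, and $X_j\equiv0$ then comes for free from continuity of $t\mapsto tX(1)$ rather than having to be proved beforehand.

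In the case $q>p$ you candidly identify the exclusion of a singular continuous component as ``the real work'' and then propose either a tail-plus-dependence estimate you do not carry out or an appeal to ``the known rigidity of (semi-)self-similar processes with stationary increments'' — but that rigidity statement \emph{is} the result being proved (Vervaat's Theorems 3.3 and 3.5), so the proposal is incomplete precisely where the difficulty lies. The paper's proof consists of citing Vervaat's Theorem 3.3 (for a nondecreasing — hence locally bounded variation — self-similar process with stationary increments: $X\equiv0$ if $H<1$, $X(t)=tX(1)$ if $H=1$, $X$ pure jump if $H>1$) and checking the single point where self-similarity at \emph{all} scales is used, namely the ergodic-theorem step of Theorem 3.1a--b, verifying that semi-self-similarity \eqref{eq:Xss} suffices there. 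To make your proposal complete you would have to either reproduce Vervaat's $H>1$ argument under semi-self-similarity or supply the missing estimate; as written, neither the $q=p$ nor the $q>p$ conclusion is proved.
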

\begin{proof}
This proposition is essentially a corollary of results of Vervaat \cite{Vervaat1985}.
$X$ satisfies the hypotheses 1.4 in \cite{Vervaat1985}, except for one: 
Equation \eqref{eq:Xss} does not hold for \emph{every} $p,q>0$ with $\log p/\log q = H$ in our case. 
This is the only missing assumption in Theorem~3.3 in \cite{Vervaat1985}. We claim that the conclusion of that theorem
still hold under \eqref{eq:Xss}.
To justify this claim, we observe that the proof of Theorem 3.3 relies on Theorem 3.1, items a and b for $H \le 1$, and on Theorem 3.5 for $H >1$. The proof of the latter is not affected by our assumption. We therefore have to verify that the conclusions of Vervaat's Theorem 3.1, items a and b still hold under \eqref{eq:Xss}.

Let $\mathcal I$ be the invariant $\sigma$-field of the stationary sequence $(X(t)-X(t-1), t \in \N)$.
In the proof of Theorem 3.1, Vervaat establishes the following for a \emph{truly} semi-self-similar process:
\begin{multline*}
t^{H-1} X(1) \stackrel{\text{law}}{=} 
\frac{X(t)}{t} = \frac{X(t)-X(0)}{t}  = \frac{1}{t} \sum_{1 \leq s \leq t} X(s)-X(s-1)\\
\to \E(X(1) -X(0) \, | \, \mathcal I)
=\E(X(1) \, | \, \mathcal I),
\end{multline*}
using respectively semi-self-similarity, $X(0)=0$ a.s., and
Birkhoff's ergodic theorem in a version that allows for infinite means, like the one presented in Theorem 3.7 of \cite{Vervaat1985}.
Because the integer $t$ is not necessarily of the form $q^{-m}$, the left-most equality has to be replaced in our case by:
$$	
\frac{X(q^m t)}{p^m t} \stackrel{\text{law}}{=} 
\frac{X(t)}{t} \cdot
$$
Now, the sequence of integers $t=t(m)=  \lceil q^{-m}\rceil$ satisfies $q^m t \geq 1$ and $q^m t \to 1$ as $m \to \infty$. Since the process $X$ has càdlàg sample paths (this may be seen using arguments similar as in the proof of Lemma \ref{lem:shift_continuous}), $X(q^m t)$ converges to $X(1)$ a.s. as $m \to \infty$.
From that point on, the proof of Theorem 3.1, items a and b, follows unchanged.

Therefore the conclusion of Theorem 3.3 holds, and that conclusion implies the statement of our proposition, since the event $A$ (in that theorem) that $X$ has locally bounded variation has probability 1 in our case due to the monotonicity of the sample paths of $X$. 
\end{proof}

We now want to study the $(p,q)$-self-similar discrete trees $T$ which are translation invariant \emph{and} for which the subtrees along the spine are independent (hence, iid).

\begin{proposition}
\label{prop:iid}
The subtrees of $T$ along the spine are iid if and only if $q\ge p$ and $\rT$ is constructed as follows:
\begin{itemize}
	\item If $q>p$, then $\mu_{|\,\Sp}= \ell_{\R_+}$, and the point process with atoms $(t,\rT_t)$, where $\rT_t$ is the subtree rooted at the spine vertex at distance $t$ from the root, is a Poisson point process with intensity $dt\otimes \nu(d\rT)$, with the measure $\nu$ decomposing as follows: There exists a measurable family $(\sigma_x)_{x >0}$ of probability measures on the space
	$\Trlone$
	such that $\sigma_x = \sigma_{px}$ for every $x$, and a measure $\Lambda(dx)$ on $(0,\infty)$ satisfying 
	$\Lambda(A) = q \Lambda(pA)$ for every Borel $A\subset(0,\infty)$,
	such that $\nu$ decomposes as the semi-direct product $\nu = \Lambda(dx) \sigma_x^x$, where $\sigma_x^x$ is the push-forward of the measure $\sigma_x$ under the map $\Srop[x]$.
	\item If $q=p$, then $\rT$ is the non-negative real line with a (deterministic) multiple of Lebesgue measure:
	$\rV=\R_+$, $\rho=0$ and there exists $\lambda \geq 1$ such that $\mu= \lambda \, \ell_{\R_+}.$
\end{itemize}
\end{proposition}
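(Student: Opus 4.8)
The plan is to split into the two regimes $q>p$ and $q=p$, having first invoked Proposition~\ref{prop:ti} (valid since $T$, hence $\rT$, is translation invariant by Proposition~\ref{prop:translation}) to obtain $q\ge p$ and to control the mass process. Since the spine of $\rT$ is a single ray, I identify $\rT$ with the pair consisting of the spine measure $\mu|_{\Sp(\rT)}$ and the point process $\Pi=\sum_i\delta_{(t_i,\rT_{t_i})}$ on $\R_+\times\Trlfin$ recording the positions $t_i$ and shapes $\rT_{t_i}$ of the subtrees hanging off the spine; the law of $\rT$ is equivalent to the law of this pair. The case $q=p$ is immediate: Proposition~\ref{prop:ti} forces $X_j\equiv0$ and $X_c(t)=X(1)t$, so $\Pi$ is empty and $\rT=(\R_+,\lambda\,\ell_{\R_+})$ with $\lambda=X(1)+1\ge1$ a possibly random constant. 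Then $\Dop[\rT]$ places spine vertices at rate $1$ and attaches to each spine segment a set of leaves whose number, given $\lambda$, is Poisson with parameter $(\lambda-1)$ times the (iid exponential) segment length; these subtrees are iid conditionally on $\lambda$, hence unconditionally iid if and only if $\lambda$ is deterministic, which is exactly the asserted description.

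For $q>p$, Proposition~\ref{prop:ti} gives $X_c\equiv0$, whence $\mu|_{\Sp(\rT)}=\ell_{\R_+}$. The backward implication is the easy one: if $\Pi$ is Poisson with intensity $dt\otimes\nu$, it is independent of the rate-$1$ Poisson sampling of the spine used to form $\Dop[\rT]$; the subtree $U_i$ attached at the $i$-th discrete spine vertex is a measurable function of the $i$-th segment length and of $\Pi$ restricted to that segment; and the complete independence of $\Pi$ over disjoint intervals, the iid exponential segment lengths, and the translation invariance of $dt\otimes\nu$ together make the $U_i$ iid.

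The forward implication is where the work lies, and the strategy is to show that the iid property forces $\Pi$ to have independent increments. The key device is refinement through self-similarity: as $\Srop[p,q]$ is a bijective rescaling, $\rT$ is also invariant under its inverse, so $\rT\deq\Srop[p^{-k},q^{-k}]\rT$ for every $k$; applying $\Dop$ and using that $\Dop$ is a kernel shows that sampling the spine of $\rT$ at rate $q^{-k}$ (with the subtrees sampled at the correspondingly finer rate $p^{-k}$) again yields a tree distributed as $T$, hence with iid subtrees. As $k\to\infty$ the segments shrink and the off-spine sampling becomes arbitrarily fine, so the discrete subtree over each rate-$q^{-k}$ segment recovers $\Pi$ restricted to that segment with increasing precision. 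Fixing disjoint intervals $I,J$ and letting $k\to\infty$, with probability tending to $1$ a spine point separates them and each of $I,J$ is, up to negligible boundary segments, a union of whole segments; the discrete subtrees over these two disjoint families of segments are independent, so in the limit $\Pi|_I$ and $\Pi|_J$ are independent. This yields complete independence of $\Pi$ over disjoint $t$-intervals; combined with translation invariance (Proposition~\ref{prop:translation}), which rules out fixed atoms and forces stationarity in $t$, the standard characterization of marked Poisson processes gives that $\Pi$ is Poisson with intensity $dt\otimes\nu$. The shape of $\nu$ is then read off from self-similarity: $\rT\deq\Srop[p,q]\rT$ maps each atom $(t,\rT_t)$ to $(qt,\Srop[p]\rT_t)$, so the intensity must satisfy $(\Srop[p])_*\nu=q\,\nu$; decomposing a subtree into its mass $x$ and its unit-mass shape and writing $\nu=\Lambda(dx)\,\sigma_x^x$, the fact that $\Srop[p]$ multiplies mass by $p$ while preserving shape splits this identity into $\Lambda(A)=q\,\Lambda(pA)$ and $\sigma_x=\sigma_{px}$, as claimed.

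The main obstacle is the independent-increments step of the forward direction, namely converting the exact iid-ness of the finite-resolution discrete subtrees into independence of the continuum process $\Pi$ over disjoint intervals. This demands making rigorous two points: first, that $\Pi|_I$ is recovered from the rate-$q^{-k}$ discrete subtrees as $k\to\infty$ (a continuity/measurability statement for the discretization under refinement, using that the off-spine sampling rate $p^{-k}\to\infty$); and second, the control of the boundary segments straddling the endpoints of $I$ and $J$, whose contribution must be shown to be asymptotically negligible so that independence passes cleanly to the limit.
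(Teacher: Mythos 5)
Your overall architecture coincides with the paper's: Proposition~\ref{prop:ti} (via Proposition~\ref{prop:translation}) supplies the dichotomy $q\ge p$ and the form of the mass process, the case $q=p$ reduces to the determinism of $X(1)$, the backward implication for $q>p$ is the routine one, and the form of $\nu$ is read off from self-similarity at the end exactly as in the paper (your $(\Srop[p])_*\nu=q\,\nu$ is the paper's $\nu=q^{-1}\nu^p$). The divergence, and the problem, lies in the crux of the forward implication: proving that the point process $\Pi$ has independent increments. You propose to refine the discretization by applying the inverse rescaling $\Srop[{p^{-k},q^{-k}}]$ (itself a map the paper never constructs; one would have to check it is well defined and measurable on $\M{\Trlinf}$) and to recover $\Pi|_I$ as a $k\to\infty$ limit of the fine discrete subtrees. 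As you yourself flag, this leaves two substantial steps unproven: (i) a joint identifiability/convergence statement to the effect that the conditional law of $(\Pi|_I,\Pi|_J)$ given $\rT$ is recovered from the rate-$(q^{-k},p^{-k})$ discretizations, so that independence survives the limit --- note that the sampling randomness is fresh at each $k$, so the relevant $\sigma$-fields do not increase in $k$ and one must argue through conditional expectations and a determining class of test functions, essentially a joint version of Lemma~\ref{lem:D_injective}; and (ii) the asymptotic negligibility of the boundary segments. Neither is carried out, and (i) in particular is not a routine verification; as written the forward direction therefore has a genuine gap.

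The paper avoids the limiting recovery of $\Pi$ altogether by scaling in the opposite direction. For the concatenations $\rT^{s,t}$ of the off-spine subtrees rooted in $[s,t)$, self-similarity and Lemma~\ref{lem:commutation} give the exact distributional identity $\Dop(\rT^{t_0,t_1})\deq\Sop^k\big(\Dop(\rT^{t_0q^{-k},t_1q^{-k}})\big)$, and the law of large numbers for the spine sampling times $\xi_n$ places, for large $k$, whole blocks $T^{n_{j-1}},\dots,T^{n_j}$ of the given iid sequence around the expanded intervals; hence $\Dop(\rT^{t_{j-1},t_j'})$ is, for each finite $k$, exactly a function of a block of iid variables, independence of these discretizations is exact, and independence of the $\rT^{t_{j-1},t_j'}$ themselves follows from the injectivity of $\Dop$. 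The only limit taken is $t_j'\uparrow t_j$. If you want to salvage your route, the cleanest fix is to switch to this ``expand and contract'' identity, which turns your approximate covering argument into an exact block decomposition and dispenses with both (i) and (ii).
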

\begin{proof}
By Proposition~\ref{prop:ti}, we either have $q>p$ and the mass process $X(t)$ is a pure-jump process, or $q=p$ and the mass process is continuous with $X(t) = tX(1)$. The statement in the case $q=p$ now follows from the fact that the subtrees of $T$ are independent if and only if $X(1)$ is a deterministic constant.

Now consider the case $q>p$. By Proposition~\ref{prop:ti}, the mass process $X(t)$ is then a pure-jump process, i.e.\ the restriction of $\mu$ to the spine equals the length measure. Denote by $T^i$ for every $i\ge0$ the off-spine subtree of the $i$-th vertex on the spine of $T$. Furthermore, for $s\le t$, denote by $\rT^{s,t}$ the \emph{concatenation} of the off-spine subtrees of the tree $\rT$ rooted at vertices $v$ on the spine with $s\le d(0,v) < t$. Here, the \emph{concatenation} of a collection of rooted trees is defined to be the rooted tree obtained from the disjoint union of the trees by identification of the roots. By the definition of the discretization operation $\Dop$, we then have
\begin{equation}
 \label{eq:subtrees}
 (T^0,T^1,T^2,\ldots) \stackrel{\text{law}}{=} (\Dop(\rT^{\xi_0,\xi_1}),\Dop(\rT^{\xi_1,\xi_2}),\ldots),
\end{equation}
where $\xi_0=0$ and for $n \in \N^*$, $\xi_n = \sum_{k=1}^n E_k$, with $E_1,E_2,\ldots$ a sequence of independent exponential random variables with parameter 1, independent from $\rT$. Equation \eqref{eq:subtrees} now shows that if the point process with atoms $(t,\rT_t)$ is a translation invariant Poisson process, then the trees $(T^i)_{i\ge 0}$ are iid. 

Now assume that the trees $(T^i)_{i\ge 0}$ are iid. 
Fix $0 = t_0 < t_1 < t_2 < \ldots$. We first notice that:
\begin{align}
 \Dop(\rT^{t_0,t_1}) & \stackrel{\text{law}}{=}  \Dop(\Srop^{k}(\rT)^{t_0,t_1}) && \text{by the self-similarity of $\rT$} \nonumber \\ 
 &= \Dop(\Srop^{k}(\rT^{t_0 q^{-k}, t_1 q^{-k}})) && \text{by definition} \nonumber \\
 &\stackrel{\text{law}}{=}  \Sop^k(\Dop(\rT^{t_0 q^{-k}, t_1 q^{-k}})) && \text{by Lemma~\ref{lem:commutation}.} \label{eq:subtrees2}
\end{align}
Fix an integer $i$, and real numbers $t_1', \ldots, t_i'$ satisfying $t_{j-1} < t_j' < t_j$ 
for every $j \in \{1,\ldots i\}$. Let $j$ be in this set. By the law of large numbers, we may find integers $n_j(k)$  such that, for $k$ large enough, 
\begin{equation}
 \label{eq:subtrees3}
 t_j' q^{-k} < \xi_{n_j}  <  t_j q^{-k},
 \end{equation}
with the $(\xi_n)$ distributed as above. Set also $n_0=0$. Applying \eqref{eq:subtrees2} with $t_0$ and $t_1$ replaced by $t_{j-1}$ and $t'_{j}$, and recalling \eqref{eq:subtrees3} and  \eqref{eq:subtrees}, we deduce that
that the discretized tree $\Dop(\rT^{t_{j-1},t_{j}'})$ is  a function of $T^{n_{j-1}},...,T^{n_j}$, for $j \in \{1,\ldots,i\}$. Therefore the collection of trees
\(
 (\rT^{t_0,t_1'},\ldots,\rT^{t_{i-1},t_i'})
\)
is independent. The numbers $t_1', \ldots, t_i'$ being arbitrary, this implies the collection  \(
 (\rT^{t_0,t_1},\ldots,\rT^{t_{i-1},t_i})
\) is independent, therefore the point process $(t,\rT_t)$ is a Poisson process. By translation invariance, its intensity measure is of the form $dt \otimes \nu$ for some measure $\nu$. By the $(p,q)$-self-similarity, $\nu = q^{-1} \nu^p$, where $\nu^p$ is the push-forward of the measure under the map $\rT \mapsto p\rT$. Disintegrating the measure $\nu$ with respect to the mass of the tree yields the decomposition stated in the theorem.
\end{proof}

In case $q=p$ and the subtrees of $T$ along the spine are iid, their common distribution is that of a rooted tree with a $\operatorname{Geo}(1/\lambda)$-distributed number \footnote{Our geometric distribution starts at $0$,  $\P(\operatorname{Geo}(\gamma)=k)=(1-\gamma)^k \gamma$, for each $k \in \N$, for $\gamma \in (0,1]$.} of edges adjacent to the root.
In case $q>p$, there is the following corollary that follows from Proposition~\ref{prop:iid} and standard properties of Poisson processes. Recall that we define the \emph{concatenation} of a collection of rooted trees to be the rooted tree obtained from the disjoint union of the trees by identification of the roots.

\begin{corollary}
\label{cor:iid}
In case $q>p$ and the subtrees of $T$ along the spine are iid, their distribution is characterized as follows:
There exists a constant $\gamma\in(0,1]$, as well as 
a measurable family of probability measures $\sigma_x$ and a measure $\Lambda(dx)$ as in the statement of Proposition~\ref{prop:iid}, such that
\begin{itemize}
\item $T^0$ is the concatenation of copies of $\operatorname{Geo}(c)$-distributed number of independent copies of a tree $T'$, where $c= 1/(1+d)$ and $d=\int_0^\infty (1-e^{-x})\Lambda(dx)$, and
\item $\E[F(T')] = d^{-1} \int_0^\infty \Lambda(dx) \int \sigma_x(d\rT) \E[F(\Dop(\rT,\Poi(x)))]$ for every bounded measurable function $F$ with $F(\root)=0$.
\end{itemize}
\end{corollary}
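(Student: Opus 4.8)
The plan is to read off the structure of $T^0$ directly from the Poissonian description of $\rT$ furnished by Proposition~\ref{prop:iid}, and then to invoke the marking and thinning theorems for Poisson processes. Recall from the proof of Proposition~\ref{prop:iid} (see \eqref{eq:subtrees}) that in the case $q>p$ we have $\mu_{|\Sp}=\ell_{\R_+}$, so the spine vertices of $T$ form a rate-one Poisson process on $\R_+$ and thus $T^0 \stackrel{\text{law}}{=} \Dop(\rT^{0,E_1})$, where $E_1\sim\operatorname{Exp}(1)$ is the distance from the root to the first spine vertex and $\rT^{0,E_1}$ is the concatenation of the off-spine subtrees $\rT_t$ attached at spine points $t\in[0,E_1)$. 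Since the Poisson samplings defining $\Dop$ on disjoint pieces are independent, discretizing a concatenation produces the concatenation of the discretizations; hence $T^0$ is the concatenation, over the atoms $(t,\rT_t)$ with $t<E_1$, of the independent discrete trees $\Dop(\rT_t)$, of which only the non-trivial ones (those strictly larger than the single root vertex) actually contribute.

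First I would identify the law of a single contributing subtree. By Proposition~\ref{prop:iid} the atoms $(t,\rT_t)$ form a Poisson process of intensity $dt\otimes\nu$ with $\nu=\Lambda(dx)\,\sigma_x^x$, each subtree being of the form $\Srop[x](\rT)$ with $x\sim\Lambda$ and $\rT\sim\sigma_x$. Marking each atom by its discretization $\Dop(\rT_t)=\Dop(\rT,\Poi(x))$ (in law) keeps $\{(t,\Dop(\rT_t))\}$ Poisson, by the marking theorem. The crucial simplification is that a subtree of total mass $x$ discretizes to a non-trivial tree exactly when its $\Poi(x)$ number of sampled points is positive, an event of probability $1-e^{-x}$ that depends on $x$ alone and not on the shape $\rT$. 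Thinning therefore retains the non-trivial subtrees as a Poisson process whose intensity has total mass per unit length $\int_0^\infty(1-e^{-x})\Lambda(dx)=d$, the law of a kept subtree being $\Dop(\rT,\Poi(x))$ conditioned on non-triviality under the reweighted measure $d^{-1}(1-e^{-x})\Lambda(dx)\sigma_x(d\rT)$. For test functions with $F(\root)=0$ the conditioning factor $(1-e^{-x})^{-1}$ exactly cancels the reweighting, so this law is precisely the $T'$ of the statement.

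It then remains to assemble the pieces. Conditionally on $E_1$, the number $K$ of contributing subtrees in $[0,E_1)$ is $\Poi(E_1 d)$ and, given $K$, these subtrees are iid copies of $T'$ by the marking theorem; integrating against $E_1\sim\operatorname{Exp}(1)$ gives $\P(K=k)=\int_0^\infty e^{-t}\tfrac{(td)^k e^{-td}}{k!}\,dt = \tfrac{d^k}{(1+d)^{k+1}}$, which is exactly $\operatorname{Geo}(c)$ with $c=1/(1+d)$. Combining, $T^0$ is the concatenation of a $\operatorname{Geo}(c)$ number of iid copies of $T'$, independent of that number, as claimed.

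The hard part will be bookkeeping rather than a genuine difficulty: one must treat the possibly infinite measure $\Lambda$ with care, verifying that $d<\infty$ (which follows from the local finiteness of $T\in\Tdinf$, forcing $K<\infty$ almost surely) so that the thinned intensity is genuinely finite, and one must be scrupulous about the conditioning on non-triviality, which is precisely what makes the $F(\root)=0$ normalization in the expression for $\E[F(T')]$ come out correctly.
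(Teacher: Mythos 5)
Your argument is correct and is exactly the proof the paper has in mind: the authors give no written proof of Corollary~\ref{cor:iid}, stating only that it ``follows from Proposition~\ref{prop:iid} and standard properties of Poisson processes,'' and your marking/thinning computation (first sampled spine point $E_1\sim\operatorname{Exp}(1)$, retention probability $1-e^{-x}$, Poisson mixed over an exponential giving $\operatorname{Geo}(1/(1+d))$, and the cancellation of $(1-e^{-x})$ against the conditioning for test functions with $F(\root)=0$) is precisely that standard argument. No gaps.
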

If a measure $\Lambda(dx)$ satisfies the condition in Proposition~\ref{prop:iid}, any measure proportional to $\Lambda(dx)$ again satisfies this condition, so the parameter $c$ of the Geometric random variable in Corollary~ \ref{cor:iid} may indeed take arbitrary values in $(0,1]$. 
\begin{remark}
\label{rem:qsd}
The number of edges in the tree $T'$ that appears in the statement of Corollary~\ref{cor:iid} follows the law 
\[
\P[N(T')=k] = d^{-1} \int_0^\infty \P(\Poi(x) = k) \Lambda(dx), \; \forall k\ge 1.
\]
The laws of this form with $d$ and $\Lambda$ as in the statement of Proposition~\ref{prop:iid} are exactly the quasi-stationary distributions of the Markov chain 
$(Z_{(\log p^{-1}) n};n=0,1,2,\ldots)$,
where $(Z_t;t\ge0)$ is the standard pure death process (i.e., $Z_{t-} \to Z_t-1$ with rate $Z_{t-}$) killed at 0 \cite{M15}.
If $P$ denotes the substochastic transition matrix (on $\N^*$) of this discrete time Markov chain, they are the distributions $\eta$ on $\N^*$ that satisfy:
$$\eta \, P =  q \eta.$$
\end{remark}

\section{A different approach to Theorem~\ref{th:inverse_limit}}
\label{sec:7}

Another approach may be considered to prove our Theorems. 
This short section gives an intuition on the objects that are introduced in the introduction. Our reasoning is based on Theorem~\ref{th:inverse_limit}. Two proof strategies for this theorem are presented and then compared.



Consider a sequence of random trees $(T_n)_{n\in\N} \in \M{\Tdfin}$ satisfying $T_m \stackrel{\text{law}}{=} \Cop{T_n}{m}$ for $n \geq m \geq 1$.

A powerful idea, popularized by David Aldous \cite{Aldous1993}, consists in using exchangeability to construct concrete representations of the (\emph{a priori} abstractly defined) inverse (or projective) limit of such a sequence\footnote{For examples of other settings see e.g.  \cite{Janson2011} and \cite{Haulk2011} and the references therein.}.

\begin{itemize}
\item  
Attach labels to the vertices of $T_n$ according to an independent random permutation of $\{1,\ldots,n\}$, 
and consider the random partial order on $\{1,\ldots,n\}$ induced by the ancestral relation $\preceq_{T_n}$ on the vertices of $T_n$.
From the compatibility of the family $(T_n, n \geq 1)$ and the Kolmogorov's extension theorem, these orders extend to an exchangeable partial order $\preceq$ on 
$\N^*:= \{1,2,\ldots\}$, whose restriction to $\{1,\ldots,n\}$ for each $n$ is $\preceq_{T_n}$. 
For any two integers $i$ and $j$ with most recent common ancestor   $i \wedge j$ (for $\preceq_{T_n}$), de Finetti's theorem ensures that $$d(i,j) := \lim_{n \to \infty} \frac{1}{n} \, \sum_{k \leq n, k \neq i,j} \ind_{k \preceq i, k \preceq j, k \not\preceq i \wedge j} \text{ exists a.s.} $$ Also $(d(i,j), i,j \in \N^\star)$ defines a random pseudo-metric on $\N^*$. Its metric completion is a rooted $\R$-tree. A natural probability measure attached to that $\R$-tree is:
$$\mu( \llbracket i,j \rrbracket ) :=  \lim_{n \to \infty} \frac{1}{n} \sum_{k \leq n, k \neq i,j} \ind_{d(i,k)+d(k,j)=d(i,j)}.$$
By definition, the measure $\mu$ dominates the length measure: $\mu( \llbracket i,j \rrbracket ) \ge d(i,j)$. 

This construction is an example of the idea, popularized by David Aldous \cite{Aldous1993}, of using exchangeability to construct concrete representations of (\emph{a priori} abstractly defined) inverse (or projective) limit of compatible sequences\footnote{For examples of other settings see e.g.  \cite{Janson2011} and \cite{Haulk2015} and the references therein.}. 
\item
Take $m=M$ an independent Poisson random variable $M$ with unit parameter in the compatibility relation above: this gives $T_M \stackrel{\text{law}}{=} \Cop{T_n}{M}$. Also, one can consider $\rT_n=\Srop[1/n](\tembed{T_n})$, the random $\R$-tree obtained from $T_n$ by giving its edges an equal length, $1/n$. The discretization of that $\R$-tree is $\Dop(\rT_n)$.
A comparison of the constructions of of $\mathcal C(T_n,M)$ and  $\Dop(\rT_n)$ now suggests that
for a large $n$, the two trees should be close.
But the distribution of the sequence  $(\Cop{T_n}{M}, n \geq M)$ is by definition independent of $n$. Therefore the sequence $\rT_n$ converges in the topology induced by the map $\Dop$. This leads us to the two key questions of this work, namely - the identification of the topology induced by the map $\Dop$,and - the identification of its relatively compacts subsets. The answers we provide to these questions entail the existence of a random rooted measured $\R$-tree $\rT\in \M{\Trlone}$ satisfying $T_n= \Dop(\rT,n)$.
\end{itemize}

The first construction is perhaps more natural than the second one, also, it quickly point to the right objects to be introduced, like $\Trlone$. However, a more careful study reveals technical measurability issues with the first approach, that do not arise with the second one. The second approach, although it seems a priori more difficult to implement, had our favour.


\appendix

\section{Tree spaces}

In this appendix, we collect some properties of the tree spaces that we work with in this paper, namely, the space of locally finite (graph-theoretic) trees and the space of locally compact measured $\R$-trees.

\subsection{Discrete trees}
\label{sec:Dtrees}

In this section, we consider rooted, locally finite trees $T=(V,E,\root)$ in the graph-theoretic sense. We recall from the introduction that two such trees are called \emph{equivalent} if there exists a root-preserving graph isomorphism between them. For simplicity, we will always identify an equivalence class with its representatives. We then denote by $\Tdinfall$ the space of (equivalence classes of) trees 
and by $\Tdfin\subset \Tdinfall$ the subspace of finite trees, both endowed with the topology of local convergence (see introduction). Also recall that a tree $T\in\Tdinfall$ determines and is determined by a partial order $\preceq_T$ on its vertex set $V(T)$ called the \emph{ancestral relation}. 

For a tree $T\in\Tdinfall$, denote by $V_R(T)$, $R\ge0$ the set of vertices at (graph) distance at most~$R$ from the origin. The following well-known precompactness criterion is easily proven by a diagonalization argument:
\begin{proposition}
\label{prop:precompact_d}
 A family $\mathfrak S\subset\Tdinfall$ is precompact in $\Tdinfall$ if and only if for every $R\ge 0$,
 \[
  \sup_{T\in\mathfrak S} \#V_R(T) < \infty.
 \]
\end{proposition}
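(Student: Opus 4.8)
The plan is to prove the two implications separately, with the substance concentrated in the sufficiency direction via a standard diagonal extraction. I will freely use the two structural facts recalled in the excerpt: the topology on $\Tdinfall$ is that of local convergence, under which $T_n\to T$ means that for every $R\ge0$ the restrictions $T_n^{\le R}$ are \emph{eventually equal} to $T^{\le R}$ (since the induced topology on $\Tdfin$ is discrete), and $\Tdinfall$ is a complete separable metric space, so that precompactness, total boundedness and sequential precompactness all coincide. I also record the elementary combinatorial fact that for each $N$ there are only finitely many equivalence classes of finite rooted trees with at most $N$ vertices; equivalently, the map $T\mapsto T^{\le R}$ sends any subfamily of $\Tdinfall$ on which $\#V_R$ is bounded by $N$ into a \emph{finite} subset of $\Tdfin$. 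Throughout I identify $V_R(T)$ with the vertex set of $T^{\le R}$.

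For necessity I would argue by contraposition. Suppose that for some $R\ge0$ we have $\sup_{T\in\mathfrak S}\#V_R(T)=\infty$, and pick $T_n\in\mathfrak S$ with $\#V_R(T_n)\to\infty$. If some subsequence converged to a limit $T\in\Tdinfall$, then local convergence would force $T_n^{\le R}=T^{\le R}$ for all large $n$ along that subsequence, hence $\#V_R(T_n)=\#V_R(T)$; but $T$ is locally finite, so $\#V_R(T)<\infty$, contradicting $\#V_R(T_n)\to\infty$. Thus no subsequence of $(T_n)$ converges, and $\mathfrak S$ is not precompact.

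For sufficiency, assume $N_R:=\sup_{T\in\mathfrak S}\#V_R(T)<\infty$ for every $R$, and let $(T_n)_{n\ge0}$ be an arbitrary sequence in $\mathfrak S$; I need to extract a convergent subsequence. By the combinatorial fact above, for each fixed $R$ the trees $T_n^{\le R}$ take values in a finite subset of $\Tdfin$, so by the pigeonhole principle some infinite subsequence has $T_n^{\le R}$ constant. Applying this successively for $R=1,2,\ldots$ to nested subsequences and passing to the diagonal subsequence $(T_{n_k})_k$, I obtain a subsequence along which, for \emph{every} $R$, the restriction $T_{n_k}^{\le R}$ is eventually constant. In the metric inducing the topology of local convergence this is exactly the statement that $(T_{n_k})_k$ is Cauchy, so by completeness of $\Tdinfall$ it converges to some $T\in\Tdinfall$. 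Hence every sequence in $\mathfrak S$ has a convergent subsequence, i.e.\ $\mathfrak S$ is precompact.

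The only point deserving care is the passage from ``each restriction is eventually constant along the diagonal sequence'' to the existence of an honest limit tree. I would handle this by invoking completeness as above, which sidesteps building the limit by hand; alternatively one can glue the compatible system of restrictions $(\tau_R)_R$ (with $\tau_R^{\le R'}=\tau_{R'}$ for $R'\le R$, inherited in the limit from the identities $(T_n^{\le R})^{\le R'}=T_n^{\le R'}$) into a single rooted tree, which is locally finite precisely because $\#V_R(\tau_R)\le N_R<\infty$. Either way this diagonalization is the one genuinely nontrivial step, and it is routine; I do not expect any serious obstacle, in keeping with the assertion that the criterion is easily proven.
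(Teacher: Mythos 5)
Your proof is correct and follows exactly the route the paper indicates: the paper states only that the criterion ``is easily proven by a diagonalization argument,'' and your sufficiency direction is precisely that diagonal extraction (using finiteness of the set of rooted trees on a bounded number of vertices, plus completeness of $\Tdinfall$), while the necessity direction is the standard contrapositive via local finiteness of any limit. No gaps.
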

From Proposition~\ref{prop:precompact_d}, it is easy to see that the space $\Tdinfall$ is not locally compact. However, it is topologically complete, as can be seen by defining the following metric:
\[
 d_{\Tdinfall} (T_1,T_2) = \sum_{k=0}^\infty 2^{-k}\Ind_{(T_1^{\le k} \ne T_2^{\le k})}.
\]
\begin{proposition}
The metric $d_{\Tdinfall}$ generates the topology of local convergence in $\Tdinfall$. Furthermore, the space $\Tdinfall$ is complete and separable.
\end{proposition}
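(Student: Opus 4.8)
The plan is to first make the metric fully explicit. Since $T^{\le j}$ is obtained from $T^{\le k}$ by a further restriction whenever $j\le k$, the set of levels on which two trees agree, $\{k\in\N : T_1^{\le k}=T_2^{\le k}\}$, is always an initial segment $\{0,\dots,K\}$ (it contains $0$, as both truncations are the bare root, and $K=\infty$ precisely when $T_1=T_2$, a tree being determined by all of its truncations). Consequently
\[
d_{\Tdinfall}(T_1,T_2)=\sum_{k>K}2^{-k}=2^{-K},\qquad K=K(T_1,T_2)=\text{largest level of agreement.}
\]
This closed formula makes the metric axioms transparent: positivity and symmetry are immediate, $d_{\Tdinfall}(T_1,T_2)=0$ iff $K=\infty$ iff $T_1=T_2$, and the triangle inequality follows term by term from the transitivity of equality, namely $\Ind_{T_1^{\le k}\ne T_3^{\le k}}\le \Ind_{T_1^{\le k}\ne T_2^{\le k}}+\Ind_{T_2^{\le k}\ne T_3^{\le k}}$ for each $k$, summed against the weights $2^{-k}$.

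Next I would identify the metric topology with the topology of local convergence. The point is that $d_{\Tdinfall}$ takes values only in $\{0\}\cup\{2^{-k}:k\ge0\}$, so the closed ball $\{T':d_{\Tdinfall}(T',T)\le 2^{-k}\}$ is open and, by the formula $d=2^{-K}$, coincides exactly with the set $U_{T,k}:=\{T':(T')^{\le k}=T^{\le k}\}$. The family $(U_{T,k})_{k\in\N}$ is a nested neighbourhood basis at $T$ for the topology of local convergence, which is just the initial topology induced by the truncation maps $T\mapsto T^{\le k}$ into the discrete space $\Tdfin$ (the nesting collapses finite intersections of subbasic sets to a single $U_{T,k}$). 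Hence the two topologies share a common basis and coincide; equivalently, at the level of sequences, $d_{\Tdinfall}(T_n,T)\to0$ iff $K(T_n,T)\to\infty$ iff for every $k$ one has $T_n^{\le k}=T^{\le k}$ for all large $n$, which is the defining condition of local convergence.

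For completeness I would take a Cauchy sequence $(T_n)$. Cauchyness means that for each $k$ the truncations $T_n^{\le k}$ are eventually constant, equal to some finite tree $S_k\in\Tdfin$, and the family $(S_k)_k$ is automatically \emph{consistent}, $S_k^{\le j}=S_j$ for $j\le k$, since both sides agree with $T_n^{\le j}$ for large $n$. I then assemble the inverse limit: viewing $V(S_k)\subseteq V(S_{k+1})$ as the vertices at distance $\le k$, the rooted tree $T$ with vertex set $\bigcup_k V(S_k)$, root $\root$ and ancestral relation $\bigcup_k\preceq_{S_k}$ satisfies $T^{\le k}=S_k$; it is locally finite because $\#V(S_k)<\infty$ for every $k$, so $T\in\Tdinfall$ and $T_n\to T$ by construction. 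Separability is then immediate: the countable set $\Tdfin$ (finitely many rooted trees on each finite vertex set up to isomorphism) is dense, since for any $T$ and any $k$ the finite tree $T^{\le k}$ has $(T^{\le k})^{\le j}=T^{\le j}$ for $j\le k$, whence $d_{\Tdinfall}(T^{\le k},T)\le 2^{-k}\to0$.

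I expect the only genuinely delicate points to be bookkeeping rather than difficulty: in the topology step one must pin down the topology of local convergence as the initial topology, so that "generates the topology" is read as an identity of open sets and not merely a coincidence of convergent sequences; and in the completeness step one must verify that consistency of the $S_k$ really yields a bona fide locally finite rooted tree whose truncations are the $S_k$, i.e. that the inverse limit is taken correctly.
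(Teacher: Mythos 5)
Your proposal is correct and follows essentially the same route as the paper's proof: identifying the metric topology via the truncation maps, extracting the eventually-constant truncations $S_k$ from a Cauchy sequence and assembling the limit tree by an inverse-limit/diagonalization argument, and obtaining separability from the density of the countable set $\Tdfin$. Your version is merely more explicit (the closed formula $d_{\Tdinfall}=2^{-K}$ and the careful verification that the consistent family $(S_k)_k$ glues to a locally finite tree), whereas the paper leaves these points as routine.
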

\begin{proof}
Let $(T_n)_{n\ge0}$ be a sequence of trees in $\Tdinfall$ and $T\in\Tdinfall$. 
By definition, $d_{\Tdinfall}(T_n,T)\to 0$ as $n\to\infty$ if and only if for every $k\ge 0$, $T_n^{\le k}\to T^{\le k}$ in $\Tdfin$. This shows the first statement. For the separability, we note that by definition, $T^{\le k}\to T$ in $\Tdinfall$ as $k\to\infty$ for every $T\in\Tdinfall$. Since every $T\in\Tdinfall$ is locally finite, $T^{\le k}\in\Tdfin$ for every $k\ge0$. Hence, the space $\Tdfin$ is dense in $\Tdinfall$ and obviously countable, which proves separability. As for the completeness, let $(T_n)_{n\ge 0}$ be a Cauchy sequence in $\Tdinfall$. By definition of the metric $d_{\Tdinfall}$, for each $k\ge 0$, there exists $N(k) < \infty$, such that $T_n^{\le k} = T_{n'}^{\le k}$ for every $n,n'\ge N(k)$. By diagonalization, we can construct $T\in\Tdinfall$, such that for every $k\ge0$, $T_n^{\le k} = T^{\le k}$ for all $n\ge N(k)$. This implies that $T_n\to T$ as $n\to\infty$ and proves completeness of the space $\Tdinfall$. 
\end{proof}

For two trees $T,T'\in\Tdinfall$, write $T\hookrightarrow T'$, if there exists a root-preserving graph homomorphism from $T$ to $T'$. By restricting to balls around the root, one easily sees that $T=T'$ if and only if $T\hookrightarrow T'$ and $T'\hookrightarrow T$.

An \emph{end} of a tree $T=(V,E,\rho)\in\Tdinfall$ is by definition an infinite path from the root, i.e.\ a sequence of pairwise distinct vertices $v_0,v_1,\ldots$, such that $v_0 = \rho$ and $\{v_n,v_{n+1}\}\in E$ for all $n$. The \emph{spine} of the tree $\rT$, denoted by $\Sp(T)$ is then defined to be the set of vertices that lie on an end, with $\Sp(T) = \{\rho\}$ if the tree has no end. We also set 
\[
 \Tdinf = \{T\in\Tdinfall: T\text{ has finitely many ends}\}.
\]
The following proposition is included for completeness:
\begin{proposition}
 \label{prop:not_top_complete}
 The space $\Tdinf$ is not topologically complete.
\end{proposition}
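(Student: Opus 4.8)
The plan is to invoke the classical theorem of Alexandrov (see e.g.\ \cite{Dugundji}): a subspace $Y$ of a completely metrizable space $X$ is itself topologically complete if and only if $Y$ is a $G_\delta$ subset of $X$. Since the preceding proposition shows that $\Tdinfall$ is Polish, it suffices to prove that $\Tdinf$ is \emph{not} $G_\delta$ in $\Tdinfall$. I want to stress at the outset the subtlety that makes this delicate: $\Tdinf$ is not only dense but in fact comeager. Indeed, for each $K$ the number $e_K(T)$ of level-$K$ vertices of $T$ whose subtree is infinite is an infimum of the continuous functions $T\mapsto\#\{\text{level-}K\text{ vertices with a descendant at level }\ge M\}$, hence upper semicontinuous, so $\{e_K\le m\}$ is open; it is also dense since it contains all finite trees. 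As the number of ends equals $\sup_K e_K$, the set $A_m=\{T:\ T\text{ has at most }m\text{ ends}\}=\bigcap_K\{e_K\le m\}$ is a dense $G_\delta$, and $\Tdinf=\bigcup_m A_m$ is comeager. Consequently the naive ``dense $+$ meager $\Rightarrow$ not $G_\delta$'' strategy (which works for $\Q\subset\R$) is unavailable here, and one must transport the failure of the $G_\delta$ property from a known example by a continuous reduction.

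Concretely, I would build a continuous map $\Phi\colon\{0,1\}^{\N}\to\Tdinfall$ from the Cantor space whose preimage of $\Tdinf$ is a familiar non-$G_\delta$ set. For $\mathbf a=(a_0,a_1,\dots)\in\{0,1\}^{\N}$ let $T_{\mathbf a}$ be the tree consisting of a single base ray $\root=v_0,v_1,v_2,\dots$ to which, for every index $i$ with $a_i=1$, we attach one additional infinite ray at the vertex $v_i$. Each $T_{\mathbf a}$ is locally finite (the degrees are bounded by $3$), so $T_{\mathbf a}\in\Tdinfall$, and its number of ends is exactly $1+\#\{i:a_i=1\}$. Hence $T_{\mathbf a}\in\Tdinf$ if and only if $\mathbf a$ has finitely many ones, i.e.\ $\mathbf a$ is eventually $0$. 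Setting $\Phi(\mathbf a)=T_{\mathbf a}$, the truncation $T_{\mathbf a}^{\le k}$ depends only on $(a_0,\dots,a_{k-1})$ (the ray attached at $v_i$ meets level $k$ only through a segment of length $k-i$, and the rays attached at $v_i$ with $i\ge k$ do not reach level $k$); therefore $\Phi$ is continuous.

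To conclude, I would use that the set $Q=\{\mathbf a\in\{0,1\}^{\N}:\ \mathbf a\text{ is eventually }0\}$ is not $G_\delta$ in the Cantor space. Indeed $Q$ is countable and dense in the perfect compact metric space $\{0,1\}^{\N}$, so were it $G_\delta$ it would be a dense $G_\delta$, hence comeager, while being countable it is meager; by the Baire category theorem a nonempty Polish space cannot be both, a contradiction. Now $\Phi^{-1}(\Tdinf)=Q$ by the end count above, and preimages of $G_\delta$ sets under continuous maps are $G_\delta$. Thus, if $\Tdinf$ were $G_\delta$ in $\Tdinfall$, then $Q$ would be $G_\delta$ in $\{0,1\}^{\N}$, which is false. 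Hence $\Tdinf$ is not $G_\delta$, and by Alexandrov's theorem it is not topologically complete.

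The main obstacle is conceptual rather than computational: because $\Tdinf$ is comeager, its failure to be $G_\delta$ is genuinely descriptive-set-theoretic (it is properly at the $\Sigma^0_3$ level) and cannot be detected by a direct category argument on $\Tdinf$ itself. The continuous reduction $\Phi$ circumvents this by pulling the standard non-$G_\delta$ set $Q$ back from the Cantor space. With this route in hand the only points requiring care are entirely routine: the continuity of $\Phi$ (immediate from the finite-level dependence of $T_{\mathbf a}^{\le k}$) and the identity $\Phi^{-1}(\Tdinf)=Q$ (immediate from counting ends).
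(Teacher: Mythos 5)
Your proof is correct. Both you and the paper reduce the statement, via the same $G_\delta$ characterization of topological completeness from Dugundji, to showing that $\Tdinf$ is not a countable intersection of open sets in the Polish space $\Tdinfall$; but from there the routes genuinely diverge. The paper argues directly: assuming $\Tdinf=\bigcap_n G_n$ with $G_n$ open and decreasing, it diagonalizes against the $G_n$ by gluing one new infinite ray at a time at ever larger distances $R_n$ chosen so that the ball $A_{R_n}(T_n)$ stays inside $G_n$, producing a limit tree in every $G_n$ with infinitely many ends. You instead package the diagonalization into the classical fact that the eventually-zero sequences $Q$ are not $G_\delta$ in $\{0,1\}^{\N}$, and transport it through the continuous map $\mathbf a\mapsto T_{\mathbf a}$ (a base ray with side rays attached where $a_i=1$), using $\Phi^{-1}(\Tdinf)=Q$. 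The witness trees are essentially the same family in both arguments; what your reduction buys is that the only thing requiring verification is the continuity of $\Phi$ (immediate from the finite-level dependence of $T_{\mathbf a}^{\le k}$), with all the category-theoretic work delegated to a standard fact, whereas the paper's construction is self-contained but must carefully manage the interplay between the radii $R_n$ and the open sets $G_n$. Your preliminary observation that $\Tdinf$ is comeager — so that no direct Baire argument on $\Tdinf$ itself can work and a reduction is genuinely needed — is correct and is a worthwhile clarification that the paper does not make explicit.
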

\begin{proof}
 By  \cite[Theorem XIV.8.3]{Dugundji},  a subspace of a complete metric space is topologically complete if and only if it is a countable intersection of open sets. Assume that this is the case and let $G_1,G_2,\ldots$ be open subsets of $\Tdinfall$, such that $\Tdinf = \bigcap_n G_n$. Assume w.l.o.g.\ that the sequence $(G_n)_n$ is decreasing, otherwise set $G_n' = \bigcap_{k=1}^n G_k$. In order to get a contradiction, we will construct a tree $T$ which is an element of every $G_n$ but with an infinite number of ends.
 
 For a tree $T\in\Tdinfall$ and $r>0$, denote by $B_r(T)$ the (open) ball of radius $r$ around $T$, i.e.\ 
 \[
  B_r(T) = \{T'\in \Tdinfall: d_{\Tdinfall}(T',T) < r\}.
 \]
 Note by the definition of $d_{\Tdinfall}$, 
 \begin{equation}
 \label{eq:macbook}
 \forall r>0\ \exists R\in\N : B_r(T) = \{T'\in\Tdinfall: (T')^{\le R} = T^{\le R}\} =: A_R(T)
 \end{equation}
 
 We now construct the tree $T$ mentioned above by diagonalization: start with the tree $T_1$ which consists of a single infinite ray. Since $T_1\in\Tdinf \subset G_1$, and since $G_1$ is open, there exists by \eqref{eq:macbook} $R_1\in\N$, such that $A_{R_1}(T_1) \subset G_1$. Construct the tree $T_2$ from $T_1$ by gluing an infinite ray to the vertex at distance $R_1$ from the root. Then $T_2\in A_{R_1}(T_1)$ and obviously $T_2\in\Tdinf$. This construction can be repeated \emph{ad infinitum}: given the tree $T_n\in\Tdinf$, consisting of $n$ infinite rays glued together, let $R_n\in\N$ such that $A_{R_n}(T_n)\subset G_n$. Then construct a tree $T_{n+1}$ by gluing an infinite ray to a vertex at distance $R_n$ from the root, such that $T_{n+1} \in A_{R_n}(T_n)$. Note that we can and will assume that the sequence $(R_n)_n$ is increasing. This gives a sequence $(T_n)_n$ of trees in $\Tdinf$, such that 
 \[
\forall n\in\N\ \forall k,l\ge n:T_k^{\le R_n} = T_l^{\le R_n}.
 \]

 By diagonalization, this sequence now defines a tree $T$ with $T^{\le R_n} = T_n^{\le R_n}$ for all $n\in\N$. Hence, $T\in A_{R_n}(T_n)\subset G_n$ for all $n\in\N$, so that $T\in \bigcap_n G_n = \Tdinf$. However, by construction, the number of ends in the tree $T$ is infinite, such that $T\notin\Tdinf$. This is the contradiction mentioned above and therefore finishes the proof.
\end{proof}

Since the space $\Tdinf$ is not topologically complete, we cannot make use of Prokhorov's theorem for measures on $\Tdinf$. For this reason, we formulate in the following proposition a precompactness criterion for a family of such measures.
For a tree $T\in\Tdinfall$ and $0\le r\le R$, denote by $N_{r,R}(T)$ the number of vertices at distance $r$ of the root in $T$ that have a descendant at distance $R$ from the root. Note that $N_{r,R}(T)$ is increasing in $r$ and decreasing in $R$, with $N_{r,R}(T)\to N_{r,\infty}(T)$ as $R\to\infty$, where $N_{r,\infty}(T)$ denotes the number of spine vertices at distance $r$ of the root in $T$.
\begin{proposition}
 \label{prop:limit_Tdinf}
 A sequence of random trees $T_1,T_2,\ldots\in \M{\Tdinf}$ is precompact in $\M{\Tdinf}$ if and only if it is precompact in $\M{\Tdinfall}$ and for every $r\in\N$ there exist $R=R(r)$ and $n_0 = n_0(r)$, such that
 \[
 \text{the family of random variables}\quad (N_{r,R(r)}(T_n))_{r\in\N,n\ge n_0(r)}\quad\text{is tight.}
 \]
\end{proposition}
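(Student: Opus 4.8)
The plan is to exploit the fact that the ambient space $\Tdinfall$ is Polish while $\Tdinf$ is merely separable and not topologically complete (Proposition~\ref{prop:not_top_complete}): Prokhorov's theorem is available in $\M{\Tdinfall}$ but not in $\M{\Tdinf}$, and the extra $N_{r,R}$ condition is exactly what compensates for the missing completeness by forbidding mass from escaping onto trees with infinitely many ends. Two structural facts underlie everything. Since $\Tdinfall$ is separable metric, both $\M{\Tdinfall}$ and $\M{\Tdinf}$ are metrizable, so I may argue with sequences; and for measures carried by the subspace $\Tdinf\subset\Tdinfall$, weak convergence in $\M{\Tdinf}$ is \emph{equivalent} to weak convergence in $\M{\Tdinfall}$, because every open set of $\Tdinf$ is the trace of an open set of $\Tdinfall$ and the open-set (portmanteau) characterization then gives the two notions verbatim. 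Finally, for fixed finite $R$ the map $T\mapsto N_{r,R}(T)$ depends only on $T^{\le R}$, hence is locally constant, continuous and integer-valued on $\Tdinfall$; consequently $N_{r,\infty}=\inf_{R\ge r}N_{r,R}$ is upper semi-continuous and $\sup_r N_{r,\infty}(T)$ is the number of ends of $T$, finite precisely when $T\in\Tdinf$.

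For the ``if'' direction, assume tightness in $\Tdinfall$ and the $N_{r,R}$ condition. From any subsequence Prokhorov extracts $T_{n_k}\to T^*$ in $\M{\Tdinfall}$. I would first show $T^*$ is carried by $\Tdinf$: for each fixed $r$, continuity and integer-valuedness of $N_{r,R(r)}$ give convergence of the clopen level probabilities, and the uniform-in-$r$ tightness from the hypothesis produces, for each $\ep>0$, a bound $K$ with $\P(N_{r,R(r)}(T^*)\ge K)\le\ep$ for every $r$. Letting $R(r)\to\infty$, using $N_{r,\infty}\le N_{r,R(r)}$ and monotone convergence over $r$, this passes to $\P(\sup_r N_{r,\infty}(T^*)\ge K)\le\ep$, so $T^*$ has finitely many ends almost surely and lies in $\M{\Tdinf}$. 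The portmanteau fact above then upgrades $T_{n_k}\to T^*$ to convergence in $\M{\Tdinf}$, giving precompactness there.

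For the converse, precompactness in $\M{\Tdinf}$ forces precompactness in $\M{\Tdinfall}$ since the inclusion is continuous, and uniqueness of limits in the Hausdorff space $\M{\Tdinfall}$ shows every subsequential limit there already lies in $\M{\Tdinf}$. To extract the $N_{r,R}$ condition I would work with the compact closure $\mathcal K$ of $\{T_n\}$ in $\M{\Tdinf}$: each $\mu\in\mathcal K$ is carried by $\Tdinf$, so its number of ends $\sup_r N_{r,\infty}$ is a.s.\ finite. For fixed $r$ the set $\{N_{r,\infty}\ge K\}$ is closed, hence $\mu\mapsto\mu(N_{r,\infty}\ge K)$ is upper semi-continuous on $\mathcal K$; the delicate step is to promote this to a bound uniform both over $\mathcal K$ and over the scale $r$, which I would obtain by controlling the total number of ends through its finite-$R$ approximations and a finite-intersection argument, and then descend from $N_{r,\infty}$ to a continuous $N_{r,R(r)}$ via $N_{r,R}\downarrow N_{r,\infty}$, choosing $R(r)$ large and $n_0(r)$ accordingly.

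The main obstacle is precisely the non-completeness of $\Tdinf$: a sequence of finitely-ended trees can converge locally to a tree with infinitely many ends (as in the proof of Proposition~\ref{prop:not_top_complete}), and the continuous finite-$R$ quantities $N_{r,R}$ cannot by themselves detect this, since a tree with many long but finite branches at a fixed scale looks, at that scale, exactly like one carrying genuine rays. The argument must therefore pass to the limit $R\to\infty$, trading the convenient continuity of $N_{r,R}$ for the mere upper semi-continuity of the spine count $N_{r,\infty}$, and must control these counts \emph{uniformly in the scale $r$}; this uniformity, which is what bounds the total number of ends $\sup_r N_{r,\infty}$, is the heart of the matter and is handled by the compactness argument above rather than scale-by-scale.
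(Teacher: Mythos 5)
Your ``if'' direction is correct and follows essentially the same route as the paper: extract a subsequential limit $T^*$ in $\M{\Tdinfall}$, use the fact that $N_{r,R(r)}$ is locally constant (so its level sets are clopen) to pass the tightness hypothesis to $T^*$, bound $N_{r,\infty}\le N_{r,R(r)}$, let $r\to\infty$ to conclude that $T^*$ has a.s.\ finitely many ends, and upgrade the convergence to $\M{\Tdinf}$ by your portmanteau observation. The paper does the same thing, using a Skorokhod coupling in place of the clopen-set argument.

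The ``only if'' direction, however, rests on an intermediate claim that is false. You propose a bound on $\mu(N_{r,\infty}\ge K)$ that is uniform both over the compact closure $\mathcal K$ of $\{T_n\}$ and over the scale $r$; since $N_{r,\infty}$ is nondecreasing in $r$ with limit equal to the number of ends, this is exactly the assertion that the laws of the number of ends are tight over $\mathcal K$. This fails: let $S_n$ be the tree consisting of $n$ infinite rays glued along their first $n$ edges. Then $\delta_{S_n}\to\delta_S$ in $\M{\Tdinf}$, where $S$ is a single ray, so the sequence is precompact in $\M{\Tdinf}$; yet $S_n$ has $n$ ends, so $\sup_n\sup_r \delta_{S_n}(N_{r,\infty}\ge K)=1$ for every $K$. (The conclusion of the proposition nevertheless holds here with $R(r)=n_0(r)=r$: for $n\ge r$ the ball of radius $r$ in $S_n$ is a segment, so $N_{r,R}(S_n)=1$ for every $R\ge r$.) The example shows that the uniformity over $r$ cannot be extracted from the family $\{T_n\}$ itself but only from its limit points, and that the cutoff $n_0(r)$ must genuinely grow with $r$ --- a feature your sketch does not account for, since a bound uniform over all of $\mathcal K$ would allow $n_0(r)\equiv 1$. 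The paper's proof handles this by reducing to a convergent sequence $T_n\to T$: tightness of $(N_{r,\infty}(T))_{r}$ holds for the single limit law $T$ because its number of ends is a.s.\ finite; one then chooses $R(r)$ so that $N_{r,R(r)}(T)=N_{r,\infty}(T)$ with high probability, and $n_0(r)$ so that $T_n^{\le R(r)}=T^{\le R(r)}$ with high probability for $n\ge n_0(r)$, which transports the bound from $T$ to the $T_n$. Your argument needs to be reorganized along these lines; the finite-intersection/compactness device you invoke cannot repair the step, because the quantity you are trying to control is simply not uniformly small on $\mathcal K$.
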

\begin{proof}
 Let $T_1,T_2,\ldots$ be a sequence of random trees converging to a limit $T\in\M{\Tdinfall}$. In order to show the proposition it suffices to show that $T$ is supported on $\Tdinf$ if and only if the second condition of the statement holds. Note that by Skorokhod's representation theorem we can and will assume that the convergence holds almost surely on a suitable probability space.
 In particular, this implies that
 \begin{equation}
 \label{eq:conv}
  \forall \ep>0\ \forall R>0\ \exists n_0\in\N\ \forall n\ge n_0: \P(T_n^{\le R} = T^{\le R}) > 1-\ep.
 \end{equation}

 We first show the ``only if'' statement, i.e. we assume that $T$ is supported on $\Tdinf$. Fix $\ep>0$ and $r\in\N$. Since $N_{r,R}(T)\to N_{r,\infty}(T)$ as $R\to\infty$, there exists $R=R(r)$, such that 
  \begin{equation}
  \label{eq:sp}
   \P(N_{r,R}(T) = N_{r,\infty}(T)) > 1-\ep.
  \end{equation}
Together with \eqref{eq:conv}, this yields the existence of $n_0 = n_0(r)$, such that
\begin{align}
\label{eq:tight}
\forall n\ge n_0: \P(N_{r,R}(T_n) = N_{r,\infty}(T)) > 1-2\ep.
\end{align}
Since $N_{r,\infty}(T)$ is bounded by the number of ends of $T$ for every $r$, the family of random variables $(N_{r,\infty}(T))_{r\in\N}$ is tight. Together with \eqref{eq:tight} this shows that the family of random variables $(N_{r,R}(T_n))_{r\in\N,n\ge n_0}$ is tight, which proves the ``only if'' statement. 

In order to show the ``if'' statement, assume that for every $r\in\N$ there exist $R=R(r)$ and $n_0 = n_0(r)$ such that the family of random variables $(N_{r,R}(T_n))_{r\in\N,n\ge n_0}$ is tight. This entails that the family $(N_{r,R(r)}(T))_{r\in\N}$ is tight by \eqref{eq:conv}, whence the family $(N_{r,\infty}(T))_{r\in\N}$ is tight as well, since  $N_{r,R}(T) \ge N_{r,\infty}(T)$ for every $r$. But since $N_{r,\infty}(T)$ converges to the number of ends in $T$ as $r\to\infty$, this number must be almost surely finite, whence $T$ is supported on $\Tdinf$. This finishes the proof of the ``if'' statement and of the proposition.
\end{proof}

\subsection{Measured \texorpdfstring{$\R$}{R}-trees}
\label{sec:Rtrees}

There are several equivalent definitions of an $\R$-tree, see \cite{D84, Evans2008}. We follow here the treatment in \cite{Abraham2013},

\begin{definition}
 \label{def:Rtree}
 An $\R$-tree is  a metric space $(\rV,d)$ with the following properties:
\begin{enumerate}
\item It is \emph{geodesically linear}, i.e.\ for every $x,y\in \rV$, there is a unique isometry $f_{x,y}:[0,d(x,y)]\to\rV$ such that $f_{x,y}(0)=x$ and $f_{x,y}(d(x,y))=y$.
\item It is ``without loops'', i.e.\ for every $x,y\in \rV$, if $r$ and $q$ are continuous injective maps from $[0,1]$ to $\rV$ such that $q(0)=x$ and $q(1)=y$, and $r(0)=x$ and $r(1)=y$, then $q([0,1])=r([0,1])$. 
\end{enumerate}
Elements of $\rV$ are called the \emph{vertices} of the $\R$-tree.
If $x,y \in \rV$, we use the notation $\llbracket x,y \rrbracket$, respectively $\llbracket x,y \llbracket$, to denote the image of $[0,d(x,y)]$, resp. $[0,d(x,y))$, under the map $f_{x,y}$.
\end{definition}

A rooted $\R$-tree $\rT = (\rV,d,\root)$ is an $\R$-tree $(\rV,d)$ together with a distinguished vertex $\root \in \rV$ called the \emph{root}. 
In this context, a partial order $\preceq_\rT$ on $\rV$ is defined by: $x\preceq_\rT y$ if and only if $x\in\llbracket\root,y\rrbracket$, in which case $x$ is an \emph{ancestor} of $y$, and $y$ is a \emph{descendant} of $x$. Also we write $x \prec_\rT y$ when $x \preceq_\rT y$ and $x \neq y$.

A rooted \emph{measured} $\R$-tree is a quadruple $\rT = (\rV(\rT), d_{\rT}, \root_{\rT}, \mu_{\rT})$  where $(\rV,d, \root)$  is a rooted $\R$-tree and $\mu$ is a Borel measure on $\rV$. 
Two measured rooted $\R$-trees are said to be \emph{equivalent} if there exists a root- and measure-preserving isometry between them. As usual, we identify a tree with its equivalence class.

\begin{definition}
 \label{def:tree_spaces}
 We define $\Trinfall$ the space of (equivalence classes of) rooted measured $\R$-trees $\rT = (\rV, d, \root, \mu)$, where
 \begin{enumerate}
  \item the metric space $(\rV,d)$ is complete and locally compact,
  \item the measure $\mu$ is boundedly finite, i.e.\,$\mu(A)<\infty$ for every bounded Borel set $A$. 
 \end{enumerate}
We further denote by $\Trfin$ the subspace of \emph{compact} trees (in particular, the measure $\mu$ is then finite), and by $\Trone$ the subspace of $\Trfin$ where $\mu$ is a probability measure.
\end{definition}

Note that by the Hopf--Rinow--Cohn--Vossen theorem, every $\rT\in\Trinfall$ is a \emph{proper metric space}, i.e.\ every bounded closed set is compact. In particular, every $\rT \in\Trinfall$ is separable (it is well-known and easy to show that this is true for every proper metric space). Also, a measure on $\rT$ is boundedly finite if and only if it is \emph{locally finite} (i.e.\ every point has a neighbourhood of finite measure), but we won't need this fact here.


We now define a metric on the space $\Trinfall$, which will be called the Gromov--Hausdorff--Prokhorov (GHP) metric $d_{GHP}$. We first recall the definition of the Hausdorff (pseudo-)metric between two subsets of a metric space $(Z,d^Z)$:
\[
 d_H^Z(A,B) = \inf\{\ep > 0: A\subset B^\ep\tand B\subset A^\ep\},\quad A,B \subset Z,
\]
where for $A\subset Z$ we define 
\[
 A^\ep = \{x\in Z: d^Z(x,A) < \ep\},\quad\text{where } d^Z(x,A)= \inf_{y\in A} d^Z(x,y).
\]
Furthermore, we recall the definition of the Prokhorov metric on the space $\Mf{Z}$ of finite Borel measures on $Z$:
\[
 d^Z_P(\mu,\nu) = \inf\{\ep>0: \mu(F) \le \nu(F^\ep)+\ep \tand \nu(F) \le \mu(F^\ep)+\ep\text{ for all closed $F\subset Z$}\}.
\]
We can now define the GHP metric on the space $\Trfin$ of \emph{compact} trees.
 For $\rT = (\rV, d, \root, \mu),\rT' = (\rV', d', \root', \mu') \in\Trfin$, set
\begin{equation}
\label{eq:GHP}
 d_{GHP}^c(\rT,\rT') = \inf_{\varphi,\varphi',Z}\left[d^Z(\varphi(\root),\varphi'(\root'))+d^Z_H(\varphi(\rV),\varphi'(\rV'))+d^Z_P(\varphi_*\mu,\varphi'_*\mu')\right].
\end{equation}
Here, the infimum is taken over all isometric embeddings $\varphi:\rV \hookrightarrow Z$, $\varphi':\rV' \hookrightarrow Z$ into some common complete separable metric space $(Z,d^Z)$ and $\varphi_*\mu$ is the push-forward of the measure $\mu$ by the map $\varphi$.

We now extend the GHP metric to the space $\Trinfall$. For a tree $\rT\in\Trinfall$ and $r\ge0$, denote by  $\rT^{\le r}$ its restriction to the closed ball of radius $r$ around the root (i.e.\ restriction of the underlying metric space as well as of the measure). Then $\rT^{\le r}$ is compact as explained above. We define the GHP metric $d_{GHP}$ on $\Trinfall$ by
\begin{equation}
\label{eq:GHPinf}
 d_{GHP}(\rT_1,\rT_2) = \int_0^\infty e^{-r}\left(1\wedge d_{GHP}^c\left(\rT_1^{\le r},\rT_2^{\le r}\right)\right)\,dr.
\end{equation}
The following facts are, respectively, Corollary~3.2 and Proposition~2.10 in \cite{Abraham2013}.
\begin{fact}
\begin{enumerate}
 \item The space $(\Trinfall,d_{GHP})$ is a complete separable metric space.
 \item The metrics $d_{GHP}$ and $d_{GHP}^c$ induce the same topology on $\Trfin$ and $\Trone$.
\end{enumerate}
\label{fact:GHP}
\end{fact}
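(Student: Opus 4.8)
Both assertions are classical once the compact case is granted, so the plan is to take as given that $d_{GHP}^c$ is a genuine metric making the space of \emph{compact} rooted measured $\R$-trees complete and separable, and to transfer these properties to $\Trinfall$ through the integral formula \eqref{eq:GHPinf}. First I would verify that $d_{GHP}$ is a metric: symmetry is immediate, and the triangle inequality follows by applying the pointwise triangle inequality for $d_{GHP}^c$ to the restrictions $\rT_i^{\le r}$, using $1\wedge(a+b)\le(1\wedge a)+(1\wedge b)$ and integrating (which requires the measurability of $r\mapsto d_{GHP}^c(\rT_1^{\le r},\rT_2^{\le r})$, itself a consequence of the right-continuity of $r\mapsto\rT^{\le r}$ in the compact GHP topology). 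For positivity, $d_{GHP}(\rT_1,\rT_2)=0$ forces $d_{GHP}^c(\rT_1^{\le r},\rT_2^{\le r})=0$ for a.e.\ $r$, hence for arbitrarily large $r$; since $d_{GHP}^c$ separates compact trees this gives $\rT_1^{\le r}=\rT_2^{\le r}$ for such $r$, and letting $r\to\infty$ yields $\rT_1=\rT_2$ as equivalence classes.

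For separability I would exhibit the countable dense family of finite $\R$-trees with rational edge lengths carrying finitely many atoms of rational mass at rational positions: dominated convergence in \eqref{eq:GHPinf} gives $\rT^{\le r}\to\rT$ as $r\to\infty$, and each compact $\rT^{\le r}$ is $d_{GHP}^c$-approximated by such a rational tree. Completeness is the core of part~(1). Given a $d_{GHP}$-Cauchy sequence $(\rT_n)$, the plan is to pass to a subsequence with summable successive distances, so that the integrand of \eqref{eq:GHPinf} converges in $L^1(e^{-r}\,dr)$; then along a further subsequence $(\rT_n^{\le r})_n$ is $d_{GHP}^c$-Cauchy for a.e.\ $r$ and converges to a compact tree $\mathcal S_r$. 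I would then check that $(\mathcal S_r)_r$ is consistent, $\mathcal S_r^{\le r'}=\mathcal S_{r'}$ for $r'\le r$, glue the $\mathcal S_r$ into a single rooted measured $\R$-tree $\rT$ with $\rT^{\le r}=\mathcal S_r$ lying in $\Trinfall$, and conclude $\rT_n\to\rT$, first along the subsequence and then for the full sequence by the Cauchy property.

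For part~(2) I would prove that on $\Trfin$ (and its subspace $\Trone$) the conditions $d_{GHP}^c(\rT_n,\rT)\to0$ and $d_{GHP}(\rT_n,\rT)\to0$ are equivalent. The forward implication uses that for every radius $r$ outside the countable set where $\rT$ charges the sphere $\{d(\root,\cdot)=r\}$ one has $d_{GHP}^c(\rT_n^{\le r},\rT^{\le r})\to0$, whence, the integrand being bounded by $1$, dominated convergence gives $d_{GHP}(\rT_n,\rT)\to0$. For the converse, suppose $d_{GHP}(\rT_n,\rT)\to0$ with $\rT$ compact of radius $\rho_0$; arguing along an arbitrary subsequence and extracting a further one along which $\rT_n^{\le r}\to\rT^{\le r}$ for a.e.\ $r$, I would fix such an $r>\rho_0$ so that $\rT_n^{\le r}\to\rT$, and then use the continuity of the radius under $d_{GHP}^c$: since $\mathrm{radius}(\rT_n^{\le r})=\min(r,\mathrm{radius}(\rT_n))\to\rho_0<r$, the radius of $\rT_n$ is eventually $<r$, so $\rT_n=\rT_n^{\le r}\to\rT$ in $d_{GHP}^c$. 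As every subsequence admits such a further subsequence, $d_{GHP}^c(\rT_n,\rT)\to0$.

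The step I expect to be the main obstacle is the gluing/consistency argument inside the completeness proof: turning the a.e.-in-$r$ limits $\mathcal S_r$ into a \emph{single} complete, locally compact $\R$-tree with boundedly finite measure requires reconciling roots, metrics and measures across radii and ruling out the loss of local compactness or of bounded finiteness at the limit. This is exactly where the $\R$-tree structure and the boundedness assumptions of Definition~\ref{def:tree_spaces} enter, and it is the content of \cite[Corollary~3.2]{Abraham2013}.
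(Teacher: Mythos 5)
The paper does not actually prove this Fact: it is imported wholesale from Abraham--Delmas--Hoscheit, the two items being quoted as their Corollary~3.2 and Proposition~2.10, so there is no in-paper argument to compare yours against. Your sketch is, in substance, a faithful outline of how that reference proceeds: metric axioms and separability for $d_{GHP}$ via the integral formula \eqref{eq:GHPinf}, completeness by extracting an a.e.-in-$r$ Cauchy family of compact restrictions and gluing them, and the topology comparison on $\Trfin$ via continuity of the restriction map at radii whose sphere is not charged by $\mu$ in one direction, and eventual stabilization $\rT_n^{\le r}=\rT_n$ for $r$ exceeding the radius of the compact limit in the other. All of this is sound, and you correctly single out the gluing step in the completeness proof as the real mathematical content.

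One caveat: the same difficulty you flag there also sits, unacknowledged, inside your positivity argument. Knowing that $\rT_1^{\le r}$ and $\rT_2^{\le r}$ are equivalent --- i.e.\ related by \emph{some} root- and measure-preserving isometry --- for a sequence of radii $r\to\infty$ does not yield $\rT_1=\rT_2$ by ``letting $r\to\infty$'': the isometries obtained for different radii need not restrict to one another, and one must extract a coherent family by a compactness/diagonal argument on isometries between compact balls before any limit can be taken. This is the same nontrivial ingredient as the consistency of the family $(\mathcal S_r)_r$ in your completeness step, and it is precisely what \cite[Corollary~3.2]{Abraham2013} supplies; it is not a formality. With that understood, the sketch is correct and matches the strategy of the cited source.
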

In what follows, we will always endow the space $\Trinfall$ and its subspaces with the metric $d_{GHP}$ and its induced topology and Borel $\sigma$-algebra, unless mentioned otherwise.

The \emph{length measure} of a rooted $\R$-tree $\rT = (\rV,d,\root)$ is by definition the unique $\sigma$-finite measure $\ell_\rT$ on $\rV$, such that \cite[Section~4.3.5]{Evans2008}
\begin{equation}
\label{eq:length}
 \forall a,b\in\rV:\ell_\rT(\rrbracket a,b \llbracket) = d(a,b)\quad\tand \quad \ell_\rT(\rV\backslash\rV^\circ) = 0,\text{ where } \rV^\circ = \bigcup_{x\in\rV} \llbracket \root,x\llbracket.
\end{equation}
We then define the spaces $\Trlinfall$, $\Trlfin$ and $\Trlone$ as follows:
\[
 \Trlinfall = \{\rT = (\rV, d, \root, \mu) \in \Trinfall: \mu \ge \ell_\rT\},\quad \Trlfin = \Trfin \cap \Trlinfall,\quad \Trlone = \Trone \cap \Trlinfall.
\]
The following lemma collects some properties of the space $\Trlinfall$. Note that a refined version of the second part is proved in the main text (Lemma~\ref{lem:Trlinf_closed}). In the lemma, $\rV^{\leq r}$ 
denotes the restriction of $\rV$ to the closed ball of radius $r$ around the root.
\begin{lemma}
\begin{enumerate}
 \item A family $\mathfrak S \subset \Trlinfall$ is precompact in $\Trinfall$ if and only if for every $r\ge0$,
 \[
  \sup_{\rT=(\rV,d,\root,\mu)\in\mathfrak S} \mu(\rV^{\leq r}) < \infty.
 \]
 \item Let $\rT= (\rV,d,\root,\mu)\in\Trinfall$ be a limit point of a sequence in  $\Trlone$. Then $\mu$ is a probability measure (hence $\rT\in\Trone$) and has full support (i.e.\ $\operatorname{supp}\mu = \rV$), furthermore, the convergence holds with respect to the $d^c_{GHP}$ metric as well.
\end{enumerate}
 \label{lem:precompact}
\end{lemma}
\begin{proof}
The first part follows from combining Theorems~2.11 or 2.6 in \cite{Abraham2013} and Lemma~4.37 in \cite{Evans2008}. 

For the second part, let $\rT=(\rV,d,\root,\mu)\in \Trinfall$ be a limit point of a sequence $\rT_n=(\rV_n,d_n,\root_n,\mu_n)\in \Trlone$. Observe that every tree $\rT_n$, $n\in\N$, has diameter bounded by one, so that the convergence holds with respect to the $d^c_{GHP}$ metric. This readily implies that the masses of the measures $\mu_n$ converge to the mass of $\mu$, which thus has mass one.

We now show that the measure $\mu$ has full support.
For this, it is enough to prove that, for $0 < \ep \leq 2$ and $x \in \rV$, the closed ball $D_\ep(x)$ of radius $\ep$ in $\rV$ centered at $x$ verifies
\begin{equation}
 \label{eq:D-ep}
\mu(D_{\ep}(x)) \geq  \ep/4.
 \end{equation}
To prove this, we fix $\ep \in (0,2]$ and $n$ large enough so that $d_{GHP}^c(\rT_n,\rT) < \ep/4$. There is an embedding of $\rT_n$ and $\rT$ into a common metric space $(Z,d)$ such that the following two properties hold (for simplicity, we do not distinguish the trees from their embedding): (1) the Hausdorff distance satisfies $d_H(\rV_n,\rV) < \ep/4$ and (2) the Prokhorov distance satisfies $d_P(\mu,\mu_n) < \ep/4$.
The following chain of inequalities then holds:
$$ \ep/2 \leq \mu_n(D_{\ep/2}(x_n)) \leq
\mu_n(D_{3 \ep/4}(x)) \leq \mu((D_{3 \ep/4})^{\ep/4}(x))+ \ep/4  \leq \mu(D_{\ep}(x)) + \ep/4.$$
For the first inequality, we distinguish according to whether the set $\rV \cap (Z \backslash D_{\ep/2}(x_n))$ is empty or not. 
In the first case, $\rT \subseteq D_{\ep/2}(x_n)$, therefore  $\mu_n(D_{\ep/2}(x_n)) =1 \geq \ep/2$ given our choice of $\ep$.
In the second case, there is a path of length at least $\ep/2$ from $x_n$ to the boundary of $D_{\ep/2}(x_n)$, and the $\mu$ measure of this path is larger than or equal to its length, therefore $\mu_n(D_{\ep/2}(x_n)) \geq \ep/2$ is again valid. The second inequality follows since $d(x,x_n) < \ep/4$ according to (1).
The third inequality is a consequence of (2), and the fourth inequality is plain. Subtracting $\ep/4$ then proves \eqref{eq:D-ep}.
\end{proof}

On the previously defined spaces, the Hausdorff distance appearing in the GHP metric is actually unnecessary. This is shown by the following lemma, that can also be seen as a consequence of the general results exposed in  \cite{ALW15} (this article appeared after a first version of our work paper was published on the arXiv).

Define the Gromov--Prokhorov (GP) metric $d_{GP}^c$ on  $\Trlfin$ by
\[
 d_{GP}^c(\rT,\rT') = \inf_{\varphi,\varphi',Z}\left[d^Z(\varphi(\root),\varphi'(\root'))+d^Z_P(\varphi_*\mu,\varphi'_*\mu')\right],
\]
where the infimum is over  $\varphi$, $\varphi'$ and $Z$ as specified in \eqref{eq:GHP}.

\begin{lemma}
 \label{lem:GP_GHP}
We have $d_{GP}^c \le d_{GHP}^c \le 3 d_{GP}^c$ on $\Trlfin$.
 \end{lemma}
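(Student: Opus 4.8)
The first inequality is immediate: for every isometric embedding of $\rT$ and $\rT'$ into a common complete separable space $Z$, the Hausdorff term $d_H^Z(\varphi(\rV),\varphi'(\rV'))$ appearing in \eqref{eq:GHP} is nonnegative, so the bracket defining $d_{GP}^c$ is dominated by the one defining $d_{GHP}^c$; taking the infimum over embeddings gives $d_{GP}^c \le d_{GHP}^c$. For the reverse inequality, the plan is to show that in \emph{every} isometric embedding $\varphi,\varphi'$ of $\rT,\rT'\in\Trlfin$ into a common $Z$ one has, writing $\delta = d_P^Z(\varphi_*\mu,\varphi'_*\mu')$ and $\delta_0 = d^Z(\varphi(\root),\varphi'(\root'))$,
\[
 d_H^Z(\varphi(\rV),\varphi'(\rV')) \le 2\,\delta + \delta_0 .
\]

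The key input is the ball-mass lower bound coming from $\mu\ge\ell_\rT$: setting $\operatorname{rad}(v)=\sup_{w\in\rV}d(v,w)$, if $\epsilon\le\operatorname{rad}(v)$ then the initial length-$\epsilon$ piece of a geodesic realizing $\operatorname{rad}(v)$ lies in the closed ball $D_\epsilon(v)$ and carries length measure $\epsilon$ by \eqref{eq:length}, so $\mu(D_\epsilon(v))\ge\ell_\rT(D_\epsilon(v))\ge\epsilon$. I also use that $\mu\ge\ell_\rT$ forces $\operatorname{supp}\mu=\rV$ (since $\ell_\rT$ has full support), whence $\operatorname{supp}(\varphi'_*\mu')=\varphi'(\rV')$. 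To bound $d^Z(x,\varphi'(\rV'))$ for $x=\varphi(v)$, I distinguish two cases. If $\operatorname{rad}(v)>\delta$, fix $\epsilon\in(\delta,\operatorname{rad}(v))$ and suppose for contradiction that $\varphi'(\rV')$ avoids $D_{\epsilon+\delta}(x)$; then $\varphi'_*\mu'(D_{\epsilon+\delta}(x))=0$, while the Prokhorov inequality applied to the closed set $F=D_\epsilon(x)$, whose enlargement satisfies $F^\delta\subset D_{\epsilon+\delta}(x)$, gives $\varphi_*\mu(D_\epsilon(x))\le\varphi'_*\mu'(F^\delta)+\delta\le\delta$, contradicting $\varphi_*\mu(D_\epsilon(x))\ge\mu(D_\epsilon(v))\ge\epsilon>\delta$. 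Hence $d^Z(x,\varphi'(\rV'))\le\epsilon+\delta$, and letting $\epsilon\downarrow\delta$ yields $\le2\delta$. If instead $\operatorname{rad}(v)\le\delta$, then $d^Z(\varphi(v),\varphi(\root))=d(v,\root)\le\delta$, so $d^Z(x,\varphi'(\root'))\le\delta+\delta_0$ and $d^Z(x,\varphi'(\rV'))\le2\delta+\delta_0$. By symmetry (both trees lie in $\Trlfin$, and $\delta,\delta_0$ are symmetric) the same bound holds for points of $\varphi'(\rV')$, proving the displayed estimate. (One routine point: as the Prokhorov infimum need not be attained, the contradiction step is run with some $\delta'>\delta$ and one lets $\delta'\downarrow\delta$ at the end.)

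Combining the three terms in any single embedding then gives
\[
 d^Z(\varphi(\root),\varphi'(\root'))+d_H^Z+d_P^Z \le \delta_0+(2\delta+\delta_0)+\delta = 2\delta_0+3\delta \le 3(\delta_0+\delta),
\]
and taking the infimum over all embeddings yields $d_{GHP}^c\le3\,d_{GP}^c$. The main obstacle is precisely the Hausdorff estimate: a priori, mass could concentrate (e.g.\ at an atom or a short protruding branch) so as to keep $d_P^Z$ small while the supports $\varphi(\rV)$ and $\varphi'(\rV')$ drift apart, which is exactly the phenomenon that makes $d_{GP}$ and $d_{GHP}$ inequivalent for general measured spaces. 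The hypothesis $\mu\ge\ell_\rT$ defining $\Trlfin$ rules this out through the bound $\mu(D_\epsilon(v))\ge\epsilon$; the only vertices escaping this bound are those with $\operatorname{rad}(v)\le\delta$, from which the whole tree already lies within $\delta$, and these are controlled directly through the root term.
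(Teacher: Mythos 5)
Your proof is correct and follows essentially the same route as the paper's: the second inequality is reduced to a Hausdorff bound between the two vertex sets, obtained by contradiction from the Prokhorov inequality together with the ball-mass estimate $\mu(D_\ep(v))\ge\ep$ that the hypothesis $\mu\ge\ell_\rT$ provides. If anything, your explicit treatment of the case $\operatorname{rad}(v)\le\delta$ via the root-distance term is slightly more careful than the paper's, which at the corresponding point invokes the argument of Lemma~\ref{lem:precompact} whose ``tree contained in the ball'' branch uses $\mu(\rV)=1$ and hence does not apply verbatim to trees in $\Trlfin$ of small total mass.
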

\begin{proof}
 The first inequality is immediate. For the second one, we fix $\ep>0$ and consider $\rT = (\rV, d, \root, \mu)$ and $\rT' = (\rV', d', \root', \mu')$ two element of $\Trlfin$ such that $d_{GP}^c(\rT,\rT') <\ep$. Without loss of generality, we may assume that the trees $\rT$ and $\rT'$ are subsets of a complete separable metric space $(Z,d)$, on which: 
 \(
d(\root,\root')+d_P(\mu,\mu') < \ep
 \). We call $\eta$ a number such that $d_P(\mu,\mu') < \eta < \ep$.
It is enough, from the definition of $d_{GHP}^c$, to show that
\begin{equation}
\label{eq:Hausdorff}
d_H(\rV,\rV') \le 2\ep
\end{equation}
to prove the second inequality.
We set $\rW= \rV \cap (Z \backslash \rV'^{2 \ep})$, and prove by contradiction $\rW=\emptyset$. By symmetry, the same statement will then hold changing the r\^ole of $\rV$ and $\rV'$, and \eqref{eq:Hausdorff} will be proved.
If $\rW \neq \emptyset$,  there exists $x \in \rW$, that is $x \in \rV$ such that $D_{\ep+ \eta}(x) \cap \rV' = \emptyset$.
Using the bound on the Prokhorov distance and the fact that $\mu'$ has support $\rV'$, we deduce $\mu(D_{\ep}(x)) \leq \mu'(D_{\ep+ \eta}(x)) + \eta  \leq \eta < \ep$. 
But we also have $ \ep \leq \mu(D_{\ep}(x))$, reasoning as in the proof of Lemma \ref{lem:precompact}.
This is a contradiction.
\end{proof}

The metric $d_{GP}^c$ gives rise to a topology on $\Trfin$ called the Gromov--Prokhorov topology. We now give an equivalent definition of this topology on the space $\Trone$. We follow \cite{Greven2009} (which is influenced by Chapter $3\frac 1 2$ in \cite{GRO99}). They consider the case of unrooted trees (or metric spaces), but the results can be easily generalized to the rooted case, for example by identifying the rooted, probability measured tree $\rT = (\rV,d,\root,\mu)\in\Trone$ with the unrooted, probability measured tree $(\rV,d,\frac 1 3 \mu + \frac 2 3  \delta_\root)$. Given a tree $\rT = (\rV,d,\root,\mu)\in\Trone$, we define a \emph{distance matrix distribution} $\DM(\rT)$, i.e.\ a probability measure on $[0,1]^{\N\times\N}$, as the push-forward of the probability measure $\delta_\root\otimes \mu^{\otimes \N^*}$ by the map 
$$(x_i)_{0 \leq i \leq n} \mapsto d(x_i,x_j)_{0 \leq i, j \leq n}.$$
Proposition~2.6 and Corollary~3.1 in \cite{Greven2009} now imply the following:
\begin{fact}
 \label{fact:Gromov_weak_uniqueness}
 A tree $\rT = (\rV,d,\root,\mu)\in\Trone$ (resp., a random tree $\rT = (\rV,d,\root,\mu)\in\M{\Trone}$) whose measure $\mu$ has full support (resp., has full support almost surely) is uniquely determined by its distance-matrix distribution $\DM(\rT)$.
\end{fact}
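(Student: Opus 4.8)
The plan is to deduce the statement from Gromov's reconstruction theorem for measured metric spaces, adapted to the rooted setting, and to reduce the random case to the deterministic one by a measurability argument. The essential content is the injectivity of the map $\DM$ on the subset of $\Trone$ consisting of trees whose measure has full support; once this is established for deterministic trees, the claim for a random tree $\rT$ follows because $\DM$ is then a Borel isomorphism onto its image (by the Lusin--Souslin theorem, an injective Borel map between Polish spaces has Borel image and Borel measurable inverse), so that the law of $\rT$ and the law of the random measure $\DM(\rT)$ determine each other.

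To prove deterministic injectivity, I would reconstruct the tree $\rT = (\rV, d, \root, \mu)$ from a single typical sample of its distance-matrix distribution. Fix a representative and let $(X_i)_{i \ge 0}$ be as in the definition of $\DM(\rT)$, so that $X_0 = \root$ and $X_1, X_2, \ldots$ are iid according to $\mu$; write $D = (d(X_i, X_j))_{i,j \ge 0}$, which has law $\DM(\rT)$. From $D$ alone one builds an abstract metric space by taking $(\N, D)$, quotienting by the relation $D_{ij} = 0$, and passing to the metric completion $\hat M_D$, rooted at the class of $0$; one then equips $\hat M_D$ with the measure $\hat\mu_D$ obtained as the weak limit of the empirical measures $n^{-1}\sum_{i=1}^n \delta_i$. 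The heart of the argument is to show that for $\DM(\rT)$-almost every $D$ this reconstructed object is isometric, through a root- and measure-preserving isometry, to $\rT$ itself.

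This rests on two almost-sure facts about the iid sample. First, since $\mu$ has full support, covering $\rV$ by countably many balls of radius $1/k$ of positive mass and applying Borel--Cantelli shows that $\{X_i : i \ge 1\}$ is almost surely dense in $\rV$; hence the completion of $(\{X_i\}, d)$ recovers $(\rV, d)$ isometrically, with $X_0 = \root$ marking the root. Second, by Varadarajan's theorem the empirical measures $n^{-1}\sum_{i=1}^n \delta_{X_i}$ converge weakly to $\mu$ almost surely, so that under the isometry the measure $\hat\mu_D$ is identified with $\mu$. Combining the two gives that $(\hat M_D, \hat\mu_D, [0])$ is almost surely equivalent to $\rT$. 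Consequently, if $\rT$ and $\rT'$ have full support and $\DM(\rT) = \DM(\rT')$, then sampling a single $D$ from this common law yields, almost surely, reconstructions equivalent to $\rT$ and to $\rT'$ simultaneously, forcing $\rT$ and $\rT'$ to be equivalent.

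The main obstacle I anticipate is not the probabilistic input but the measure-theoretic bookkeeping: one must check that the reconstruction $D \mapsto (\hat M_D, \hat\mu_D, [0])$ is a genuinely measurable map into $\Trone$ (equivalently, that the completion of a random metric space is measurable), which is precisely the point flagged as delicate in the introduction, and one must verify that weak convergence of the empirical measures in the abstractly reconstructed completion matches weak convergence in $\rV$ under the reconstructed isometry. These are exactly the technical statements supplied by Proposition~2.6 and Corollary~3.1 of \cite{Greven2009}, which is why I would ultimately invoke them rather than redo the verification by hand.
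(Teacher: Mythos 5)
The paper does not actually prove this Fact: it is quoted directly from Proposition~2.6 and Corollary~3.1 of \cite{Greven2009}, and your proposal, after sketching the underlying argument, falls back on exactly those citations, so in substance you and the paper land in the same place. Your sketch of the deterministic case is the standard Gromov reconstruction argument and is correct: full support (plus compactness of trees in $\Trone$) gives almost sure density of the sample via Borel--Cantelli, Varadarajan's theorem identifies the limit of the empirical measures with $\mu$, and coupling two trees with the same distance-matrix distribution through a common sample $D$ forces them to be equivalent. The one point to tighten is your reduction of the random case. For a random tree, $\DM(\rT)$ as used in the paper is the \emph{annealed} distance-matrix distribution, i.e.\ the single measure $\int \DM(\rT)\,P(d\rT)\in\M{[0,1]^{\N\times\N}}$ produced by the extension of Section~\ref{sec:extension}, not the law of the random measure $\DM(\rT)$ viewed as an element of $\M{\M{[0,1]^{\N\times\N}}}$. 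Your Lusin--Souslin argument shows that the latter determines the law of $\rT$, which is weaker than the Fact: for a general injective kernel, distinct mixing measures can produce the same mixture. The gap is closed by your own reconstruction map $\Psi:D\mapsto(\hat M_D,\hat\mu_D,[0])$: since $\Psi(D)$ is equivalent to $\rT$ almost surely when $D$ is drawn by first sampling $\rT$ and then sampling points, the law of $\rT$ is the push-forward of the annealed distribution under $\Psi$, and this identity --- together with the measurability of $\Psi$ that you correctly flag as the delicate step --- is precisely what Corollary~3.1 of \cite{Greven2009} supplies.
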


Furthermore, by Theorem~5 in \cite{Greven2009}, we have:
\begin{fact}
\label{fact:Gweak}
Let $\rT,\rT_1,\rT_2,\ldots\in\Trone$. Then $d_{GP}^c(\rT_n,\rT)\to 0$ if and only if $\DM(\rT_n)\to \DM(\rT)$ as $n\to\infty$ (the latter convergence is weak convergence on $\M{\R^{\N\times \N}}$ and  $\R^{\N\times \N}$ is equipped with the product topology).
\end{fact}

Fact~\ref{fact:Gweak} and Lemma~\ref{lem:GP_GHP} yield the following corollary:
\begin{corollary}
\label{cor:GHP_Gweak}
Let $\rT,\rT_1,\rT_2,\ldots\in\Trlone$. Then $d_{GHP}^c(\rT_n,\rT)\to 0$ if and only if $\DM(\rT_n)\to \DM(\rT)$ as $n\to\infty$.
\end{corollary}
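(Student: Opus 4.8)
The plan is to combine the metric comparison from Lemma~\ref{lem:GP_GHP} with the characterization of Gromov--Prokhorov convergence in Fact~\ref{fact:Gweak}. Since $\Trlone \subset \Trlfin$ by definition, the two-sided estimate $d_{GP}^c \le d_{GHP}^c \le 3\,d_{GP}^c$ holds for all of the trees appearing in the statement. From these inequalities it is immediate that, for a fixed limit $\rT$ and a sequence $(\rT_n)$ in $\Trlone$, one has $d_{GHP}^c(\rT_n,\rT)\to 0$ if and only if $d_{GP}^c(\rT_n,\rT)\to 0$: the lower bound $d_{GP}^c\le d_{GHP}^c$ forces $d_{GP}^c(\rT_n,\rT)\to 0$ whenever $d_{GHP}^c(\rT_n,\rT)\to 0$, while the upper bound $d_{GHP}^c\le 3\,d_{GP}^c$ gives the converse implication. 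In other words, the two metrics are topologically equivalent on $\Trlone$, being comparable up to the multiplicative constant $3$.

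It then remains to invoke Fact~\ref{fact:Gweak}, stated for $\Trone\supset\Trlone$, which asserts that $d_{GP}^c(\rT_n,\rT)\to 0$ is equivalent to the convergence $\DM(\rT_n)\to\DM(\rT)$ in the sense of finite-dimensional distributions. Chaining the two equivalences yields the claim. I expect no genuine obstacle here: the corollary is a direct formal consequence of the two cited results, and the only points requiring a moment's care are to confirm that Lemma~\ref{lem:GP_GHP} indeed applies to the trees at hand---which it does, as $\Trlone\subset\Trlfin$---and that the mode of convergence of $\DM$ asserted in the corollary is understood as the same finite-dimensional convergence furnished by Fact~\ref{fact:Gweak}.
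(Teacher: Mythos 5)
Your proposal is correct and is exactly the paper's argument: the corollary is stated as an immediate consequence of Lemma~\ref{lem:GP_GHP} (which applies since $\Trlone\subset\Trlfin$) combined with Fact~\ref{fact:Gweak}. Nothing more is needed.
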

 
For $r \ge 0$, the restriction map $\rT \to \rT^{\le r}$ is not necessarily continuous in the GHP topology.
However, there is the following
\begin{lemma}
\label{eq:inmeasure}
Let $r \ge 0$ and let $\rT,\rT_1,\rT_2,\ldots\in\Trinfall$ satisfy $d_{GHP}(\rT_n, \rT) \to 0$. There exists a sequence $(r_n)_{n \ge 1}$ that satisfies:
$$ r_n\ge r , \, r_n \to r \text{ and } d_{GHP}^c(\rT_n^{\le r_n}, \rT^{\le r}) \to 0 \text{ as } n \to \infty.$$
 \end{lemma}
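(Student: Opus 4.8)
The plan is to exploit the integral form \eqref{eq:GHPinf} of the GHP metric, because the naive choice $r_n=r$ fails precisely since the restriction map $\rT\mapsto\rT^{\le r}$ is not continuous in the GHP topology. The guiding idea is that although truncation at the fixed radius $r$ may be badly behaved, for \emph{most} radii just above $r$ the truncations of $\rT_n$ and $\rT$ are close, and we are free to push $r_n$ slightly above $r$ to land on such a good radius. Concretely I would split the target quantity by the triangle inequality for $d_{GHP}^c$,
\[
d_{GHP}^c(\rT_n^{\le r_n},\rT^{\le r}) \;\le\; d_{GHP}^c(\rT_n^{\le r_n},\rT^{\le r_n}) + d_{GHP}^c(\rT^{\le r_n},\rT^{\le r}),
\]
and control the two terms by different mechanisms. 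Note that each $\rT^{\le s}$ is compact (closed balls are compact by local compactness and completeness), so $d_{GHP}^c$ is well defined throughout.

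The second term involves only the fixed limiting tree, and I would show that $s\mapsto\rT^{\le s}$ is right-continuous at $s=r$, i.e.\ $d_{GHP}^c(\rT^{\le s},\rT^{\le r})\to0$ as $s\downarrow r$. This is routine: embedding both truncations canonically in $\rT$, the roots coincide; the Hausdorff distance between the nested closed balls $\rV^{\le r}\subset\rV^{\le s}$ is at most $s-r$ (project each point of $\rV^{\le s}$ along its geodesic to the root onto $\rV^{\le r}$); and the Prokhorov distance between the restricted measures is at most the annulus mass $\mu(\rV^{\le s}\setminus\rV^{\le r})$, which tends to $0$ by continuity from above of the boundedly finite measure $\mu$, since $\rV^{\le s}\setminus\rV^{\le r}\downarrow\emptyset$ as $s\downarrow r$.

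For the first term I exploit the integral structure. Since $d_{GHP}(\rT_n,\rT)\to0$ and $e^{-s}\ge e^{-(r+\delta)}$ on $[r,r+\delta]$, for every fixed $\delta>0$
\[
\int_r^{r+\delta}\big(1\wedge d_{GHP}^c(\rT_n^{\le s},\rT^{\le s})\big)\,ds \;\le\; e^{r+\delta}\,d_{GHP}(\rT_n,\rT)\;\longrightarrow\;0.
\]
I would then diagonalize. Set $\delta_k=1/k$ and pick an increasing sequence $(N_k)$ so that for all $n\ge N_k$ the integral above over $[r,r+\delta_k]$ is below $\delta_k^3$. By the Markov inequality the set of $s\in[r,r+\delta_k]$ with $1\wedge d_{GHP}^c(\rT_n^{\le s},\rT^{\le s})\ge\delta_k$ then has Lebesgue measure at most $\delta_k^2<\delta_k$, so its complement in $[r,r+\delta_k]$ is nonempty; for $n\in[N_k,N_{k+1})$ I choose $r_n$ in this complement, which yields $d_{GHP}^c(\rT_n^{\le r_n},\rT^{\le r_n})<\delta_k$ once $\delta_k<1$. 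By construction $r_n\ge r$ and $r_n\to r$, the first term tends to $0$ by choice of $r_n$, and the second tends to $0$ by the right-continuity established above, proving the claim.

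The main obstacle is the first term: because truncation is genuinely discontinuous in the GHP topology, there is no pointwise-in-$s$ control, and the only available handle is the averaged $L^1$-smallness coming from the integral metric. The essential step is thus to convert the $L^1$-smallness of $s\mapsto d_{GHP}^c(\rT_n^{\le s},\rT^{\le s})$ on a short interval above $r$ into the \emph{existence} of a good radius, which the Markov averaging achieves; the diagonalization then upgrades a single good radius per scale into a sequence $r_n\to r$ with $r_n\ge r$.
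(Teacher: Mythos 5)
Your proposal is correct and follows essentially the same route as the paper: both arguments convert the $L^1$-smallness of $s\mapsto d_{GHP}^c(\rT_n^{\le s},\rT^{\le s})$ on a short interval above $r$ (coming from the integral definition \eqref{eq:GHPinf}) into the existence of a good radius $r_n\ge r$ via a Markov-type averaging, and then handle the remaining term $d_{GHP}^c(\rT^{\le r_n},\rT^{\le r})$ by the right-continuity of $s\mapsto\rT^{\le s}$. The only differences are cosmetic: the paper packages the averaging through a single quantity $\ep(n)$ rather than a diagonalization over scales $1/k$, and cites Lemma~5.2 of \cite{Abraham2013} for the right-continuity where you prove it directly (your direct argument via the geodesic projection and the annulus mass is sound).
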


\begin{proof}
Define a sequence of functions on $(0,\infty)$ by $f_n(s)=d_{GHP}^c\left(\rT_n^{\le s},\rT^{\le s}\right) \wedge 1$.
Fix $r \ge 0$ and set, for $n\ge 1$, $\ep(n) := (e^{r+1} \int_{(0,\infty)} f_n(s) e^{-s} \, ds  )^{1/2}$. 
First, the definition \eqref{eq:GHPinf} of $d_{GHP}$gives $\lim_{n \to \infty} \ep(n) = 0$. 
Second, for $n\ge1$, we have $\int_{(r,r+1)} f_n(s) \, ds \le \ep(n)^2$ and $f_n \ge 0$, thus there exists $r_n$ satisfying $$r \le r_n \le (r+\ep(n))  \wedge (r+1) \text{ and } 0 \le f_n(r_n)\le \ep(n) \vee \ep(n)^2.$$
The sequence $(r_n)_{n \ge 1}$ then satisfies the first two requirements of the lemma; also the sequence $d_{GHP}^c(\rT_n^{\le r_n}, \rT^{\le r_n})$ has a null limit.
To control the remaining $d_{GHP}^c(\rT^{\le r_n}, \rT^{\le r})$ term, we observe that 
the function $r \to\rT^{\le r}$ is right-continuous: this follows from Lemma 5.2 in \cite{Abraham2013}
and the right-continuity of $r \to \mu(\rT^{\le r})$.
\end{proof}

Analogously to discrete trees, we call an \emph{end} of a tree $\rT=(\rV,d,\root,\mu)\in\Trinfall$ an infinite ray starting from the root, i.e., the union $\bigcup_n \llbracket \root,x_n\rrbracket$, where $x_1,x_2,\ldots\in\rV$ are such that $x_n\preceq_T x_{n+1}$ for all $n$ and $d(\root,x_n)\to\infty$ as $n\to\infty$.
For a tree $\rT\in\Trinfall$, we denote by $\Sp(\rT)$ the union of its ends, called the \emph{spine}. We further define the subspaces $\Trinf\subset\Trinfall$ and $\Trlinf\subset\Trlinfall$ of trees having only a finite number of ends. 

If $0\le r\le R$, we denote by $N_{r,R}(\rT)$ the number of vertices at distance $r$ of the root in $\rT$ that have a descendant at distance $R$ from the root. 
The analogue to Proposition~\ref{prop:limit_Tdinf} for $\R$-trees is the following, whose proof we omit:
\begin{lemma}
\label{lem:precompact_2}
 A sequence of random trees $\rT_1,\rT_2,\ldots\in \M{\Trinf}$ is precompact in $\M{\Trinf}$ if and only if it is precompact in $\M{\Trinfall}$  
 and for every $r\ge 0$ there exist $R=R(r)$ and $n_0 = n_0(r)$, such that
 \[
  \text{the family of random variables}\quad (N_{r,R(r)}(\rT_n))_{r\in\N,n\ge n_0(r)}\quad\text{is tight.}
 \]
\end{lemma}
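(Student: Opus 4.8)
The plan is to mirror the proof of Proposition~\ref{prop:limit_Tdinf}, the discrete analogue, replacing the exact agreement of truncated trees available in the local topology by the approximate agreement furnished by the GHP in-measure Lemma~\ref{eq:inmeasure}. Since $\Trinfall$ is Polish (Fact~\ref{fact:GHP}), precompactness in $\M{\Trinfall}$ is equivalent to tightness, and the inclusion $\Trinf\hookrightarrow\Trinfall$ is continuous, so precompactness in $\M{\Trinf}$ always entails precompactness in $\M{\Trinfall}$. It therefore suffices to prove that, for a sequence $\rT_n\to\rT$ converging in $\M{\Trinfall}$, the limit $\rT$ is supported on $\Trinf$ (i.e.\ has finitely many ends almost surely) if and only if the stated $N_{r,R}$-tightness condition holds; by Skorokhod's representation theorem I would realise the convergence $\rT_n\to\rT$ almost surely.

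The crux, and the only genuinely new difficulty compared with the discrete case, is to transfer the value of the branch-counting functional $N_{r,R}$ from $\rT_n$ to the limit. Two observations make this possible. First, $N_{r,R}(\rT)$ depends only on the ball of radius $R$, so $N_{r,R}(\rT^{\le R'})=N_{r,R}(\rT)$ for every $R'\ge R$; combined with Lemma~\ref{eq:inmeasure} applied at some fixed radius $R''>R$, this yields $R''_n\ge R''$, $R''_n\to R''$, with $d_{GHP}^c(\rT_n^{\le R''_n},\rT^{\le R''})\to0$ and $N_{r,R}(\rT_n)=N_{r,R}(\rT_n^{\le R''_n})$. Second, I claim $N_{r,R}$ is continuous under $d_{GHP}^c$ at every pair $(r,R)$ that is a continuity point of the monotone maps $s\mapsto N_{s,R}$ and $t\mapsto N_{r,t}$ (recall $N_{r,R}$ is non-decreasing in $r$ and non-increasing in $R$, hence has countably many discontinuities in each variable). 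The proof is a correspondence argument: GHP convergence gives $\ep_n$-correspondences with $\ep_n\to0$ preserving the root, and since the radius of a most recent common ancestor is a fixed affine function of root-distances, branch-point radii are preserved up to $O(\ep_n)$; counting branches reaching radius $R$ through distinct radius-$r$ points on either side then sandwiches $N_{r-\ep_n,R+\ep_n}(\rT)\le N_{r,R}(\rT_n^{\le R''_n})\le N_{r+\ep_n,R-\ep_n}(\rT)$, both sides converging to $N_{r,R}(\rT)$ at a continuity point. By Fubini, for Lebesgue-almost every $(r,R)$ the pair is almost surely a continuity point, so $N_{r,R}(\rT_n)\to N_{r,R}(\rT)$ almost surely, hence in law, for almost every $(r,R)$.

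With this transfer, both implications follow exactly as in Proposition~\ref{prop:limit_Tdinf}. For the ``only if'' direction, assuming $\rT$ has finitely many ends, I use $N_{r,R}(\rT)\uparrow N_{r,\infty}(\rT)\le\#\mathrm{ends}(\rT)<\infty$ as $R\to\infty$ to pick, for each $r$, a continuity value $R=R(r)$ with $N_{r,R(r)}(\rT)$ close to $N_{r,\infty}(\rT)$; the transfer then controls $N_{r,R(r)}(\rT_n)$ in distribution by $N_{r,\infty}(\rT)$ for $n\ge n_0(r)$, and the tightness of $(N_{r,\infty}(\rT))_r$ (non-decreasing in $r$ with almost surely finite limit $\#\mathrm{ends}(\rT)$) gives tightness of the double-indexed family, using monotonicity in $r$ to pass from continuity values to arbitrary integers $r$. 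For the ``if'' direction, tightness of $(N_{r,R(r)}(\rT_n))$ transfers through the same continuity statement to tightness of $(N_{r,R(r)}(\rT))_r$, whence $(N_{r,\infty}(\rT))_r$ is tight since $N_{r,R}\ge N_{r,\infty}$; as $N_{r,\infty}(\rT)\uparrow\#\mathrm{ends}(\rT)$, this forces $\#\mathrm{ends}(\rT)<\infty$ almost surely, i.e.\ $\rT$ is supported on $\Trinf$.

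The main obstacle is the continuity claim for $N_{r,R}$: unlike the discrete setting, where truncated trees eventually coincide, here branch points of the limit tree can migrate across the sphere of radius $r$, and branches can fail to reach radius $R$ by an arbitrarily small margin, so $N_{r,R}$ is genuinely discontinuous on a random countable set of radii. Confining these discontinuities to a Lebesgue-null set of pairs $(r,R)$ via monotonicity and Fubini, and then exploiting the slack at continuity points to run the correspondence argument, is where the real work lies; the remaining tightness bookkeeping is routine and parallels the discrete proof.
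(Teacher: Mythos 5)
The paper does not actually prove this lemma: it is stated as the $\R$-tree analogue of Proposition~\ref{prop:limit_Tdinf} ``whose proof we omit'', so the only benchmark is the discrete argument you are adapting, and your proposal adapts it correctly. The one genuinely new ingredient — approximate transfer of $N_{r,R}$ along GHP convergence — is handled soundly: your sandwich $N_{r-c\ep,R+c\ep}(\rT)\le N_{r,R}(\rT_n)\le N_{r+c\ep,R-c\ep}(\rT)$ is valid because two witnesses at height $R$ lying over distinct level-$r$ points have their most recent common ancestor at height $<r$ (by the formula $d(\root,u)=\tfrac12(d(\root,x)+d(\root,y)-d(x,y))$), hence are at mutual distance $>2(R-r)$, and this separation survives, up to $O(\ep)$, in any common embedding with Hausdorff distance $\ep$; the converse bound follows by symmetry. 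For the Fubini step it is cleanest to parametrize the anti-diagonals $r+R=\mathrm{const}$, along which $u\mapsto N_{r+u,R-u}$ is monotone integer-valued and so has countably many discontinuities, giving a Lebesgue-null bad set of pairs; your phrasing in terms of separate continuity in each variable also works but needs the extra (true) observation that $N_{r,t}-N_{s,t}$ is non-increasing in $t$ for $s<r$. The remaining quantifier bookkeeping — choosing $R(r)$ and $n_0(r)$ once so that tightness holds for every threshold, e.g.\ by letting the error allowed at scale $r$ decay like $2^{-r}$ and treating finitely many small $r$ separately — is exactly the same issue as in the discrete proof and is routine, as you say.
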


\section{An extension of continuous maps}
\label{sec:extension}

Let $X$ and $Y$ be separable metric spaces endowed with the Borel $\sigma$-field and denote by $\M{X}$ and $\M{Y}$ the spaces of probability measures on $X$ and $Y$, respectively, endowed with the topology of weak convergence, i.e.\ $\mu_n\Rightarrow\mu$ in $\M{X}$ if and only if 
\[
\int f(x)\,\mu_n(dx) \to \int f(x)\,\mu(dx),\quad\text{$\forall f:X\to\R$ bounded, continuous.}
\]
The following basic fact is used several times in the article and mentioned for completeness: every continuous function $g:X\to\M{Y}$, $x\mapsto g_x$ can be naturally extended to a continuous function $\hat g:\M{X}\to\M{Y}$, $\mu\mapsto \hat{g}_\mu$, where
\[
\int f(y)\,\hat{g}_\mu(dy) = \int \mu(dx) \left(\int f(y)\, g_x(dy)\right),\quad\text{$\forall f:Y\to\R$ bounded, measurable.}
\]
The fact that this map is well defined and continuous directly follows from the above definition of weak convergence.

\bibliography{contraction_of_trees,contraction_of_trees_additions}

\end{document}